\definecolor{cadmiumgreen}{rgb}{0.0, 0.42, 0.24}
\definecolor{electricultramarine}{rgb}{0.25, 0.0, 1.0}
\theoremstyle{definition}
\newtheorem{definition}{Definition}[section]
\newtheorem{example}[definition]{Example}
\theoremstyle{remark}
\newtheorem{remark}[definition]{Remark}
\theoremstyle{plain}
\newtheorem{theorem}[definition]{Theorem}
\newtheorem{lemma}[definition]{Lemma}
\newtheorem{proposition}[definition]{Proposition}
\newtheorem{corollary}[definition]{Corollary}
\newcommand*\bigcdot{ {\mathpalette\bigcdot@{.5}} }
\newcommand*\bigcdot@[2]{\mathbin{\vcenter{\hbox{\scalebox{#2}{$\m@th#1\bullet$}}}}}
\newcommand\FF{ \mathbb{F} }
\newcommand\RR{ \mathbb{R} }
\newcommand\ZZ{ \mathbb{Z} }
\newcommand\cA{ \mathcal{A} }
\newcommand\cC{ \mathcal{C} }
\newcommand\cE{ \mathcal{E} }
\newcommand\cF{ \mathcal{F} }
\newcommand\cH{ \mathcal{H} }
\newcommand\cK{ \mathcal{K} }
\newcommand\cM{ \mathcal{M} }
\newcommand\cS{ \mathcal{S} }
\newcommand\sC{ \mathscr{C} }
\newcommand\sD{ \mathscr{D} }
\newcommand\sT{ \mathscr{T} }
\newcommand\pt{\operatorname{\{\ast\} } }
\newcommand\Rp{ { \mathbb{R}_{>0} } }
\newcommand\Tr{\operatorname{Tr}}
\newcommand{\inv}{{\operatorname{inv}}}
\newcommand\CAlg{ {\operatorname{CAlg}} }
\newcommand\End{\operatorname{End}}
\newcommand\Fun{ {\operatorname{Fun}} }
\newcommand\id{\operatorname{id}}
\newcommand\Id{\operatorname{Id}}
\newcommand\Ind{ {\operatorname{Ind} } }
\newcommand\Hom{\operatorname{Hom}}
\newcommand\inHom{\operatorname{\underline{\Hom}} }
\newcommand\PrLst{  {  \operatorname{Pr}^{\operatorname{L} }_{st} } }
\DeclareMathOperator{\colim}{colim}
\newcommand\Ext{\operatorname{Ext}}
\newcommand\cof{\operatorname{cof}}
\newcommand\dMod{\operatorname{-Mod}}
\newcommand\Mod{\operatorname{Mod}}
\newcommand\Loc{ {\operatorname{Loc}} }
\newcommand\mhom{\operatorname{\mu hom}}
\newcommand\Sh{\operatorname{Sh}}
\newcommand\sHom{\operatorname{\mathscr{H}om}}
\newcommand\supp{ {\operatorname{supp}} }
\newcommand\CZ{ {\operatorname{CZ}} }
\newcommand\HH{ {\operatorname{HH} } }
\newcommand\Tam{{\sT}}
\title{On the Hochschild cohomology of Tamarkin categories}
\author{Christopher Kuo, Vivek Shende, and Bingyu Zhang}
\date{}
\newcommand\PrLV[1][\sT]{  {  \operatorname{Pr}^{\operatorname{L} }_{\sT,st} } }
\newcommand\Perv{\operatorname{\Perv}}
\newcommand\nT{ {\textnormal{T}} }
\newcommand\hatLam{ {\widehat{\Lambda}}}
\begin{document}
 
\maketitle

\begin{abstract}
To any open subset of a cotangent bundle, Tamarkin has associated a certain quotient
of a category of sheaves.   
Here we show that the Hochschild cohomology of this category 
agrees with the
filtered symplectic cohomology of the subset.  
\end{abstract}

\renewcommand\contentsname{\vspace*{-20pt}}
\setcounter{tocdepth}{2}
\small
\renewcommand{\baselinestretch}{1}
\tableofcontents
\renewcommand{\baselinestretch}{1.0}\normalsize

 \section{Introduction}

The symplectic cohomology of a Liouville manifold $W$ is, by definition \cite{Viterbo-functorsI}:
\begin{equation}\label{unfiltered symplectic cohomology intro}
    SH^*(W) := \varinjlim_{f
 \, linear} HF^*(W, f),
\end{equation} 

 Here, each $f: W \to \mathbb{R}$ is a smooth function linear at infinity; 
$HF(W, f)$ is the Hamiltonian Floer cohomology for the Hamiltonian function $f$; the limit is taken over  `continuation' maps
 $HF^*(W, f) \to HF^*(W, g)$, which are defined when $f \leq g$
everywhere.  
There are celebrated comparison theorems with the Hochschild homology  \cite{Abouzaid-generation} and cohomology
\cite{Seidel-deformation} of the corresponding wrapped Fukaya category: 
\begin{equation} \label{symplectic homology as hochschild homology}
    HH_{*-n}(Fuk(W)) \to SH^*(W) \to HH^*(Fuk(W)).
\end{equation} 

These maps have been shown to be isomorphisms  \cite{Ganatra-thesis, Ganatra-cyclic}, under hypotheses that are known to be satisfied when $W$ is Weinstein \cite{CDGG, Ganatra-Pardon-Shende2}.  These isomorphisms are important in the study of homological mirror symmetry, e.g. to see that all deformations of the wrapped Fukaya category can be geometrically interpreted via holomorphic curves with punctures asymptotic to Reeb orbits. 

On the other hand, the symplectic cohomology of Liouville manifolds as defined by \eqref{unfiltered symplectic cohomology intro} carries none of the quantitative information that symplectic cohomology has been historically used to capture \cite{FLoer-Hofer-symplectic, Floer-Hofer-Cieliebak,Biran-Poltervich-Salamon}.  This is restored by restricting the class of allowable Hamiltonians and remembering the action filtration.  Fix now a Liouville domain $W_0$ completing to $W$.  The symplectic cohomology of $W_0$ is, by definition, \cite{Viterbo-functorsI}: 
 \[ SH^*(W_0) := \varinjlim_{f|_{W_0} \le 0} HF^*(W, f).\]

Here, we restrict the previous collection of functions to those 
satisfying $f|_{W_0} \le 0$.  The filtration comes from the symplectic action functional $\int (\lambda-Hdz)$ on Hamiltonian trajectories, 
 which are, by definition, the generators of the Hamiltonian Floer complexes. (We may try to remember the action
filtration without restricting the class of functions, but in the limit the filtration would collapse.)  The resulting $\RR$-filtered symplectic cohomology does carry
quantitative information; in particular furnishing a symplectic capacity 
sufficient to establish the non-squeezing principle \cite{Viterbo-functorsI}.  
An $S^1$-equivariant version has been an even richer source of 
embedding obstructions \cite{gutthutchings2018capacities}. 
A recent survey of further applications of the filtration may be found in \cite{PRSZ-persistencebook}.

\vspace{2mm}
Comparing the discussions above, one might want a filtered version of \eqref{symplectic homology as hochschild homology}.  Unfortunately, there is not yet a filtered version of the wrapped Fukaya category. 
However, after \cite{Nadler-Zaslow, Ganatra-Pardon-Shende3, Viterbo, Guillermou-Viterbo}, it is natural to expect that any such filtered Fukaya category would be equivalent to some category of sheaves. 
The purpose of the present article is to show that, for open subsets of cotangent bundles, a filtered analogue of \eqref{symplectic homology as hochschild homology} holds, with the Fukaya category replaced by a certain category of sheaves. 
\vspace{2mm}

Let us review some basic notions of microlocal sheaf theory and introduce the relevant category.  
Fix some background choice of
symmetric monoidal stable presentable 
 category $\mathbf{k}$,
e.g. the category of modules over a commutative ring or ring spectrum.
For a topological space $X$ we write $\Sh(X)$ for the
(symmetric monoidal stable presentable) category of sheaves valued in $\mathbf{k}$.  
For a manifold $M$ and sheaf $F \in \Sh(M)$, Kashiwara and Schapira introduced a closed conic coisotropic subset  $ss(F) \subseteq T^*M$ measuring the failure of propagation of sections of $F$ \cite{Kashiwara-Schapira-SoM} when $\mathbf{k}$ is additionally compactly generated. Tamarkin explained  \cite{Tamarkin-non-displaceability} that to study non-conic subsets, one 
should consider the symplectic reduction diagram
\[ T^*(M \times \RR) \xleftarrow{i_+} J^1 M \xrightarrow{\pi} T^*M \]
and consider, for $U \subseteq T^*M$, the category
\[\sT(U) := \Sh(M \times \RR) / \{F\, |\, 
\pi(i_+^{-1} ss(F)) \cap U  = \varnothing\}.\]

We are ultimately interested in the Hochschild homology of $\sT(U)$.  One basic problem is how to define this at all, since $\sT(U)$, similarly to categories of sheaves $\Sh(M)$, is not compactly generated.  As such, 
at least one usual definition of Hochschild homology (bar complex on compact objects) is not appropriate.  In the case of $\Sh(M)$, one proceeds as follows. 
The category $\Sh(M)$ is dualizable
as a presentable stable category.  For dualiziable categories, 
the trace of the identity functor is 
a natural notion of Hochschild invariant, in particular, specializing to the usual notion for compactly generated categories  \cite[Prop. 4.24]{Hoyois-Scherotzke-Sibilla}. 

Functors on sheaf categories can be presented as integral kernels, and one
can write a formula for the trace.  
Indeed, if $H$ is a locally compact Hausdorff space, and  $K \in \Sh(H \times H)$ a sheaf with associated 
    integral transform  $\Phi_K: \Sh(H) \to \Sh(H)$, then 
\begin{equation} \label{intro trace of sheaves} 
    \Tr(\Phi_K) = \Gamma_c(H, \Delta^* K).
\end{equation}

Results along the lines of \eqref{intro trace of sheaves}
were long previously announced by Efimov, see e.g. \cite{Hoyois-Efimov-K-theory}, although details \cite{Efimov-K-theory} did not
appear until after the original preprint of the present article.  In any case, we include a proof of \eqref{intro trace of sheaves} here as Corollary \ref{trace of sheaves}. Also due to Efimov is the amusing corollary that  the category of sheaves is a categorification of compactly supported cohomology: 
$\Tr(1_{\Sh(H)}) = \Gamma_c(H, 1_H).$

\vspace{2mm}

We wish to apply similar ideas to $\sT(U)$.  We will want to work over the category  $\sT := \sT(point)$.  This category has been well studied in the literature (see e.g. \cite{Guillermou-Schapira}); as we recall in 
Section \ref{Tamarkin point}, $\sT$
carries a symmetric monoidal structure (\S \ref{t is monoidal}) and a natural $(\vec{\RR}, +)$ action induced by $1_{\RR_{\geq a}}\rightarrow 1_{\RR_{\geq b}}$ ($b\geq a$), and is equivalent to a category of filtered complexes. Finally in Section \ref{tamarkin category properties}, we arrive at the first main new contribution of the present article: 

\begin{theorem}[Prop. \ref{t linear} and \ref{t dualizable}]
    The category $\sT(U)$ is linear over 
    $\sT$, and $\sT$-linearly dualizable. 
Thus we may take the $\sT$-linear trace
\[\Tr : \End(\sT(U)) \to \sT\]
and in particular have $\Tr(1_{\sT(U)}) \in \sT$.
\end{theorem}

The $\sT$-linearity amounts to an `action filtration' on $\sT(U)$.  
Thus we have defined a Hochschild invariant of $\sT(U)$ valued in
(a category equivalent to) a category of filtered complexes,  hence obtained, as desired, a filtration on the Hochschild homology.

\vspace{2mm}
To  study the trace, we use the following fact: when $\sC \subseteq \sD$ is the image of a projector $P_{\sC}$, then $\Tr_{\sC}(1_{\sC}) = \Tr_{\sD}(P_{\sC})$ (see Lem. \ref{duality-of-retraction}).  
We will write \[P_U: \sT(T^*M) \to \sT(T^*M)\] for the projector onto $\sT(U)$. 

There is a natural isomorphism $\sT(T^*M) = \Sh(M, \sT)$ (Prop. \ref{tamarkinsheaf}).  After expressing the projector $P_U$ via an integral kernel $P_U \in \Sh(M \times M, \sT)$,  we have, by \eqref{intro trace of sheaves} and the above discussion,
\begin{equation} \label{formula for trace}
    \Tr(1_{\sT(U)}) = \Gamma_c(M, \Delta^* P_U).
\end{equation}

To make use of Equation \eqref{formula for trace}, one needs a formula for the kernel $P_U$.  
When $U$ is a ball, one such formula was given in 
\cite{Chiu-nonsqueezing} using  microlocal cutoff via Fourier transform; 
as explained in 
\cite{Zhang-capacities-Chiu-Tamarkin}, the formula and proof work in much more generality.  
We review this in \S \ref{fourier projector}.  Meanwhile in \cite{Kuo-wrapped-sheaves}, a different formula was given which computes the adjoint of inclusions $\Sh_X(M) \to \Sh(M)$ `by wrapping sheaves'; the ideas can be straightforwardly extended to the setting $\sT(U) \to \sT(T^*M)$, which we do in \S \ref{wrapping projector}.  The right hand side of \eqref{formula for trace} was the main object of study in \cite{Zhang-capacities-Chiu-Tamarkin,Zhang-S1-Chiu-Tamarkin}; we recall some results of those articles in \S \ref{bingyu results}.  

We will ultimately express our results in terms of Hochschild cohomology rather than Hochschild homology.  However, there is not much difference: in Theorem \ref{right calabi yau}, we show that the categories $\sT(U)$ are right Calabi-Yau when $U \subset T^*M$ with $M$ orientable, and hence obtain in Corollary \ref{proposition: tw HH co using sheaf} a formula relating Hochschild chains and cochains:  
\begin{equation} \HH_{\sT}^\bullet(\sT(U),(-\infty,L]) \simeq  \Hom(\Tr(1_{\sT(U)}), 1_{\RR_{\ge L} } [-n]). 
\end{equation}

Here, $\HH_{\sT}^\bullet(\sT(U),(-\infty,L]$ is the `action smaller than $L$' part of $\HH_{\sT}^\bullet(\sT(U))$.

In \S \ref{sec: guillermou-viterbo},
we turn to a comparison with filtered symplectic cohomology. Due to the present status of the Floer-theoretic literature, we now fix our coefficients $\mathbf{k}$ to be the dg category of modules over a field $\mathbb{F}$.  We
follow the symplectic cohomology conventions of \cite{Cieliebak-Frauenfelder-Oancea2010};  for a Liouville domain $W_0$ and for any interval $(a, b) \subseteq \RR$ whose ends $a,b$ are not in the action spectrum of $\partial_\infty W_0$,  there is a graded abelian group $SH_{(a, b)}^{*}(W_0)$. By applying a comparison result of Guillermou and Viterbo \cite[App. E]{Guillermou-Viterbo} to the aforementioned `wrapping' formula for the projector, we deduce:

\begin{theorem} \label{main theorem} Let $M$ be a closed manifold. Let $U \subseteq T^*M$ be a relatively compact set such that the Liouville form on $T^* M$ restricts to a contact form on $\partial U$. For any $L > 0$ which is not in the action spectrum of $\partial U$, we have an isomorphism 
\[\HH_{\sT}^*(\sT(U),(-\infty,L])\simeq SH_{(-\infty,L)}^{*}(\overline{U}) .\]    \end{theorem}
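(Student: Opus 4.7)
The plan is to chain the tools developed earlier in the paper so as to reduce the Hochschild cohomology of $\sT(U)$ to an object accessible by the Guillemou-Viterbo comparison. The first step is to invoke the right Calabi-Yau structure of Theorem \ref{right calabi yau}: for such categories, Hochschild cohomology with coefficients in an endofunctor is identified, up to a degree shift, with the trace of that endofunctor. This lets us rewrite $\HH^*(\sT(U), \nT_{L*})$ in terms of $\Tr(\nT_{L*}|_{\sT(U)})$ and is what allows the trace machinery of \S \ref{sheaftraces} to be brought to bear.

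Next, I compute this trace using the fact that $\sT(U)$ sits inside $\sT(T^*M)$ as the image of the projector $P_U$. By Lemma \ref{duality-of-retraction}, the trace of $\nT_{L*}|_{\sT(U)}$ equals the ambient trace of $\nT_{L*} \circ P_U$. Using the identification $\sT(T^*M) = \Sh(M, \sT)$ of Proposition \ref{tamarkinsheaf} together with an $\sT$-linear version of Theorem \ref{intro trace of sheaves}, this trace is computed by
\[\Gamma_c(M, \Delta^*(\nT_{L*} \circ P_U)),\]
where the composite is regarded as a sheaf kernel on $M \times M$ valued in $\sT$.

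The third step replaces $P_U$ by the wrapping formula developed in \S \ref{wrapping projector}, which expresses $P_U$ as a colimit of positive wrappings; composition with $\nT_{L*}$ cuts off those wrappings whose generating action exceeds $L$. By Theorem \ref{generating function comparison}, the resulting sheaf-theoretic object is identified with the filtered generating function cohomology of $U$ in the action window $(-\infty, L)$. At this point the problem is entirely in the symplectic world.

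Finally, under the hypotheses that $\partial U$ is of contact type and $L$ is not in the action spectrum of $\partial U$, the Guillemou-Viterbo comparison \cite[App. E]{Guillermou-Viterbo} identifies this filtered generating function cohomology with $SH^*_{(-\infty, L)}(\overline{U})$, completing the argument. I expect the main obstacle to be careful bookkeeping across the three comparison steps: one must check that the degree shift coming from the Calabi-Yau structure cancels the shift appearing in the generating function / symplectic cohomology dictionary, and that the hypothesis $L \notin \mathrm{Spec}(\partial U)$ is precisely what aligns the sheaf-theoretic truncation induced by $\nT_{L*}$ with the Floer-theoretic truncation of the action window; this is where the assumption that $\partial U$ is contact (and not merely that $U$ is open) enters in an essential way.
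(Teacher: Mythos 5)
Your first two steps (reduce to the kernel of the projector, then feed in the wrapping formula) match the paper's strategy, but from there the argument has several genuine gaps. First, the wrapping formula of Corollary \ref{wrapping formula for projector} computes $Q_U$, the projector onto the \emph{complement} category $\Sh_{U^c}(M;\sT)$, as $\varinjlim_\alpha \cK(\varphi^{H_\alpha})|_1$ --- not $P_U$. To reach $\sT(U)$ one must run the fiber sequence $P_U \to 1_{\Delta_{T^*M}} \to Q_U$, which forces a long-exact-sequence (``9-diagram'') argument in which the degree shift by one between $\HH^\bullet(\sT(U),\cdot)$ and $\HH^\bullet(\Sh_{U^c}(M;\sT),\cdot)$ must be matched against the shift in Proposition \ref{proposition: Cieliebak-Oancea}. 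Related to this, your proposal never computes the small-action-window piece: the identification $\HH^\bullet_{\FF}(\sT(U),\nT_{\epsilon*}) \simeq H^*(\overline{U},\partial U;\FF)$ for small $\epsilon$ (Theorem \ref{proposition: computation of HH at level 0}, proved via the Sato--Sabloff sequence and a $\mhom$ computation) is an essential input needed to splice the positive-action window, where the Floer comparison applies, onto the full window $(-\infty,L]$.

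Second, routing through Theorem \ref{generating function comparison} does not work here: that theorem is stated (and its proof, which uses GFQIs on $S^{2n}\times\RR^N$ via \cite{Guillermou-Viterbo}, only makes sense) for $U \subseteq \RR^{2n}$, whereas the main theorem concerns arbitrary closed $M$; and even where it applies it lands in generating function homology, which would then require a separate, independent comparison to symplectic cohomology. The paper instead applies the Guillermou--Viterbo comparison (Theorem \ref{theorem: GV-comparision with HF}) directly, identifying the sheaf-theoretic $\Ext$ groups with Lagrangian Floer cohomology of $\hatLam_{\varphi_1^{H_\alpha}}$ against the diagonal, converts to Hamiltonian Floer cohomology of $-H_\alpha$, and then --- this is the step your proposal is silent on --- must pass from the \emph{inverse} limit $\widetilde{SH}^*_{(\epsilon,L]}(\overline{U})$ over compactly supported Hamiltonians (which is what the wrapping formula naturally produces) to the \emph{direct} limit $SH^*_{(\epsilon,L]}(\overline{U})$ over Hamiltonians linear at infinity. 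This is the finite-action-window version of Cieliebak--Frauenfelder--Oancea (Proposition \ref{proposition: Cieliebak-Oancea}), a nontrivial Floer-theoretic argument that cannot be absorbed into ``bookkeeping.'' Without these three ingredients --- the $P_U$ versus $Q_U$ fiber sequence, the small-window computation, and the $\widetilde{SH}$-to-$SH$ comparison --- the proof does not close.
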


Theorem \ref{main theorem} is a filtered analogue of the celebrated isomorphism $SH^*(W) \xrightarrow{\sim} \HH^*(Fuk(W))$ for Weinstein manifolds $W$ \cite{Seidel-deformation, Ganatra-thesis, Ganatra-cyclic}, with the Tamarkin category $\sT(U)$ standing in for a filtered wrapped Fukaya category.  

We deduce the following comparison of previously defined symplectic capacities: 

\begin{corollary} \label{main corollary}
    The symplectic capacities defined by Viterbo in \cite{Viterbo-functorsI} using symplectic cohomology, and by the third-named author in  \cite{Zhang-S1-Chiu-Tamarkin} using microsheaf theory agree.
\end{corollary}

Theorem \ref{main theorem} and Corollary \ref{main corollary} establish the non-equivariant version of Conjectures 5.1, 5.2 and 5.3 from \cite{zhang-thesis}, which in turn are inspired the discussion in \cite[Sec. 4.8]{JunZhang}.

\vspace{2mm}

Finally, a similar (but easier) argument gives a comparison result between the Hochschild cohomology and filtered generating function homology, in Theorem \ref{generating function comparison}.

\vspace{5mm}
{\bf Acknowledgements.}  We thank Tomohiro Asano, Shaoyun Bai, Sheng-Fu Chiu, Kai Cieliebak, Sheel Ganatra, Zhen Gao, St\'ephane Guillermou,  Peter Haine, Joseph Helfer, Yuichi Ike, Wenyuan Li, Shuaipeng Liu, Alexandru Oancea, Nick Rozenblyum, Germ\'an Stefanich, Kyler Siegel, and Claude Viterbo for helpful conversations. 
The work presented in this article is supported by Novo Nordisk Foundation grant NNF20OC0066298, Villum Fonden Villum Investigator grant 37814, and Danish National Research Foundation grant DNRF157.

\section{Traces and dualizable categories} \label{traces}

Let us recall the formalism of traces, 
which provides an appropriate notion of Hochschild homology for presentable categories which are dualizable but not compactly generated. 

Let $\cM$ be a symmetric monoidal 1-category and $1_\cM$ its unit object. 
An object $X \in \cM$ is said to be dualizable if there exists $Y \in \cM$ and maps
\begin{equation} \label{unitcounit} \eta: 1_\cM \rightarrow Y \otimes X, \qquad \qquad \ \epsilon: X \otimes Y \rightarrow 1_\cM
\end{equation}
such that the triangle equalities
\begin{equation} \label{for: triangle-identity} 
(\epsilon \otimes \id_X)\circ (\id_X \otimes \eta) = \id_X \, \text{and} \ (\id_Y\otimes \epsilon) \circ (\eta\otimes\id_Y ) = \id_Y,
\end{equation}
are satisfied. In this case we write $Y = X^\vee$. Note that when $X^\vee$ exists, the inner Hom, $\inHom(X,Z)$, for $Z \in \cM$ is given by $X^\vee \otimes Z$ since $\epsilon$ and $\eta$ exhibit $X^\vee \otimes (-)$ as the right adjoint of $X \otimes (-)$.

We recall the classical: 

\begin{definition}
For a dualizable object $X$, and an endomorphism $f: X \rightarrow X$, the trace of $f$, denoted by $\Tr(f,X)=\Tr(f)$, is defined to be the object in $\End(1_\cM)$ defined by the composition
\[1_\cM \xrightarrow{\eta} X^\vee \otimes X \xrightarrow{\id \otimes f} X^\vee \otimes X  = X \otimes X^\vee \xrightarrow{\epsilon} 1_\cM .\]
\end{definition}
The term `trace' comes from the fact that
when $\cM$ is the category of vector spaces, dualizablility is equivalent to finite dimensionality, and the trace of an endomorphism of a finite dimensional vector space is the trace in the sense of linear algebra.  

The notion of trace generalizes naturally to  higher categorical contexts, where the trace recovers and generalizes Hochschild homology.  Here we recall some of the relevant notions from \cite[\S 4]{Hoyois-Scherotzke-Sibilla}; see also \cite[\S 5.1.1]{Ben-Zvi-Francis-Nadler}. 

Fix a rigid
symmetric monoidal idempotent-complete small stable category $\cE$ and $\mathbf{k}\coloneqq \Ind(\cE)$; we consider $\PrLst(\mathbf{k})$, the symmetric monoidal category of stable presentable categories linear over  $\mathbf{k}$.

We recall some relevant facts about $\PrLst(\mathbf{k})$, all of which can
be found in \cite[\S 4]{Hoyois-Scherotzke-Sibilla}. 

\begin{itemize}
    \item The endomorphims 
of $\mathbf{k}$ in $\PrLst(\mathbf{k})$ is 
$\mathbf{k}$. 
    \item There is a full subcategory 
$Cat^{Mor}(\cE) \subseteq \PrLst(\mathbf{k})$, whose objects comprise the essential image of $\Ind$. 
    \item  Passing to compact objects gives an equivalence from $Cat^{Mor}(\cE)$ to a category whose objects are small idempotent complete $\mathbf{k}$-linear categories and whose morphisms can be identified with bimodules.  
    \item The objects $\cC$ of  $Cat^{Mor}(\cE)$ are all dualizable, with
$\Ind(\cC)^\vee = \Ind(\cC^{op})$.  
    \item If a morphism $f: \Ind\, \cC \to \Ind\, \cC$ is given by the $\cC$-bimodule $B_f$, then $\Tr(f) \in \Ind \, \cE$ is naturally identified with the Hochschild homology of $B_f$, e.g. as computed by the bar complex \cite[Prop. 4.24]{Hoyois-Scherotzke-Sibilla}. 
\end{itemize}

Because of the last point above, it is natural to use the trace as a definition of (or substitute for) the Hochschild homology for 
categories in $\PrLst(\mathbf{k})$ which
are dualizable but not compactly generated.  In fact, according to Efimov, the trace is the Kan extension of Hochschild homology to dualizable categories \cite{Efimov-K-theory}. 

Dualizable but not compactly generated categories are plentiful: a classical 
and relevant example is the category of sheaves on a manifold \cite{Neeman-sheavesnotcompact, Neeman-triangulated}.  More generally, Lurie has shown that presentable categories are dualizable iff they are retracts of compactly generated categories \cite[Proposition D.7.0.7]{Lurie-SAG} -- which applies to the above example, as the category of sheaves is a retract of the compactly generated category of presheaves \cite[Proposition 21.1.7.1]{Lurie-SAG}. 

\subsection{Dualizing datum for retraction}

In linear algebra, the trace of a projector is equal to the trace of the identity on its image.  We have the following generalization: 

\begin{lemma}\label{duality-of-retraction}
For a dualizing datum $(X,X^\vee,\epsilon,\eta)$ in $\cM$,
 let $e: X \rightarrow X$ be an idempotent which can be written as $X \xrightarrow{r} Y \xrightarrow{i} X$
for some inclusion $i$ and some retraction $r$. 

Assume that the dual idempotent $e^\vee: X^\vee \rightarrow X^\vee$ also splits to $X^\vee \xrightarrow{s} Z \xrightarrow{j} X^\vee$. 
Then the pair
\[\epsilon_0 \coloneqq \epsilon \circ (i \otimes j) : Y \otimes Z \rightarrow 1_{\cM} \qquad \qquad \qquad \eta_0 \coloneqq (s \otimes r) \circ \eta : 1_{\cM} \rightarrow Z \otimes Y\]
exhibits $Z$ as the dual of $Y$. 
Moreover, we have
\[\Tr(e,X)= \Tr(\id_Y,Y) =\Tr(\id_Z,Z)=\Tr(e^\vee,X^\vee).\]

\end{lemma}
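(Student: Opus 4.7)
The plan is to verify the duality of $(Y, Z)$ via the two zigzag identities, identify $\Tr(\id_Y, Y)$ with $\Tr(e, X)$ by direct manipulation, and then invoke the standard fact that an endomorphism and its dual have equal trace.

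For the first zigzag, $(\epsilon_0 \otimes \id_Y) \circ (\id_Y \otimes \eta_0) = \id_Y$, I would unfold the definitions and use bifunctoriality of $\otimes$ to rewrite the composition as
\[
(\epsilon \otimes \id_Y) \circ (i \otimes js \otimes r) \circ (\id_Y \otimes \eta).
\]
Using $js = e^\vee$ together with the defining property of the dual idempotent $(e^\vee \otimes \id_X) \circ \eta = (\id_{X^\vee} \otimes e) \circ \eta$, and the identity $re = r(ir) = r$ (from $ri = \id_Y$), the middle factor $e^\vee$ can be replaced by $\id_{X^\vee}$. Pulling $i$ and $r$ outside by bifunctoriality then rewrites the composition as $r \circ \bigl((\epsilon \otimes \id_X) \circ (\id_X \otimes \eta)\bigr) \circ i$; the bracketed part is $\id_X$ by the zigzag for $(X, X^\vee)$, leaving $r \circ i = \id_Y$. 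The second zigzag identity is verified symmetrically using $sj = \id_Z$ and the dual-map identity on the $X^\vee$ side.

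For the trace equalities, unwinding $\Tr(\id_Y, Y) = \epsilon_0 \circ \tau \circ \eta_0$, where $\tau \colon Z \otimes Y \to Y \otimes Z$ is the symmetry, and using naturality of $\tau$ together with $ir = e$ and $js = e^\vee$ gives
\[
\Tr(\id_Y, Y) = \epsilon \circ (e \otimes e^\vee) \circ \tau \circ \eta.
\]
Applying $\tau$ to both sides of the dual-map identity produces $(\id_X \otimes e^\vee) \circ \tau \circ \eta = (e \otimes \id_{X^\vee}) \circ \tau \circ \eta$, which together with $e^2 = e$ absorbs the factor $e^\vee$ and yields $\epsilon \circ (e \otimes \id_{X^\vee}) \circ \tau \circ \eta = \Tr(e, X)$. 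An analogous calculation gives $\Tr(\id_Z, Z) = \Tr(e^\vee, X^\vee)$, and the remaining equality $\Tr(e, X) = \Tr(e^\vee, X^\vee)$ is the standard fact that $\Tr(f) = \Tr(f^\vee)$ in any symmetric monoidal category, a one-line consequence of $\epsilon \circ (f \otimes \id) = \epsilon \circ (\id \otimes f^\vee)$.

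No step is conceptually deep; the whole argument is forced once one draws the appropriate string diagrams. The main place to be careful is the bookkeeping of tensor slots and symmetries, since the paper's convention $\eta \colon 1 \to X^\vee \otimes X$ places $X^\vee$ on the opposite side from the usual coevaluation, forcing each trace formula to contain an explicit swap $\tau$ and requiring repeated use of its naturality. I expect the main obstacle is simply translating the diagrammatic cancellations into syntactically unambiguous symbolic expressions.
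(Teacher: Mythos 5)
Your zigzag verification is essentially the same computation the paper carries out: both reduce $(\epsilon_0 \otimes \id_Y) \circ (\id_Y \otimes \eta_0)$ to the composite $Y \xrightarrow{\id_Y \otimes \eta} Y \otimes X^\vee \otimes X \xrightarrow{i \otimes e^\vee \otimes r} X \otimes X^\vee \otimes Y \xrightarrow{\epsilon \otimes \id_Y} Y$, and then collapse it to $r \circ e \circ i = \id_Y$ (the paper writes this step as $r \circ (e^\vee)^\vee \circ i$; you spell it out via the dual-map identity $(e^\vee \otimes \id_X)\circ\eta = (\id_{X^\vee}\otimes e)\circ\eta$ plus $re=r$, which is the same content). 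Where you genuinely diverge is in the trace identities. The paper derives $\Tr(e,X)=\Tr(\id_Y,Y)$ in one line from the \emph{commutativity of trace}, $\Tr(i\circ r)=\Tr(r\circ i)$, citing Ponto--Shulman (Proposition 2.4), and leaves the remaining two equalities implicit. You instead carry out a direct string-diagram calculation: unfold $\Tr(\id_Y,Y)=\epsilon_0 \circ \tau \circ \eta_0$, slide $i,j,s,r$ through the symmetry by naturality to produce $e \otimes e^\vee$, and then absorb $e^\vee$ into $e$ using the dual-map identity and idempotence. This is more self-contained (no external citation) and has the minor virtue of directly producing all three equalities, at the cost of more slot-bookkeeping. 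Both routes are correct; the paper's is shorter once Ponto--Shulman is in hand, while yours makes the mechanism explicit.
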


\begin{proof}
To verify $(Y,Z,\epsilon_0,\eta_0)$ is a dualizing datum, we shall verify the triangle identities \eqref{for: triangle-identity}, i.e., we claim that
$(\epsilon_0 \otimes \id_Y) \circ (\id_Y \otimes \eta_0)=\id_Y$ and $(\id_Z \otimes \epsilon_0 ) \circ (\eta_0 \otimes \id_Z)=\id_Z$. Since the proofs of these two equivalence are similar, we check the first one, which can be exhibited by the following commuting diagram that we will explain:
$$
\begin{tikzpicture}
\node at (0,4) {$Y$};
\node at (3,4) {$Y \otimes Z \otimes Y$};
\node at (6,4) {$Y$};
\node at (0,2) {$Y \otimes X^\vee \otimes X$};
\node at (6,2) {$X \otimes X^\vee \otimes Y$};
\node at (-3,0) {$X$};
\node at (0,0) {$X \otimes X^\vee \otimes X$};
\node at (6,0) {$X \otimes X^\vee \otimes X$};
\node at (9,0) {$X$};

\node[scale=1.5] at (3,3) {$\circlearrowleft$};

\draw [->, thick] (0.2,4) -- (2,4) node [midway, above] {$ \id \otimes \eta_0 $};
\draw [->, thick] (4,4) -- (5.8,4) node [midway, above] {$ \epsilon_0 \otimes \id$};
\draw [->, thick] (1.2,2) -- (4.8,2) node [midway, above] {$i \otimes e^\vee \otimes r$};

\draw [->, thick] (-2.8,0) -- (-1.2,0) node [midway, above] {$\id \otimes \eta$};
\draw [->, thick] (1.2,0) -- (4.8,0) node [midway, above] {$\id \otimes e^\vee \otimes \id$};
\draw [->, thick] (7.2,0) -- (8.8,0) node [midway, above] {$\epsilon \otimes \id$};

\draw [->, thick] (0,3.7) -- (0,2.3) node [midway, left] {$\id \otimes \eta$}; 
\draw [<-, thick] (6,3.7) -- (6,2.3) node [midway, right] {$\epsilon \otimes \id$};
\draw [->, thick] (0,1.7) -- (0,0.3) node [midway, left] {$i \otimes \id$}; 
\draw [<-, thick] (6,1.7) -- (6,0.3) node [midway, right] {$\id \otimes r$};

\draw [->, thick] (0.5,2.3) -- (2.6,3.7) node [midway, right] {$ $}; 
\draw [->, thick] (3.4,3.7) -- (5.5,2.3) node [midway, left] {$ $};

\draw [->, thick] (-0.3, 4){[rounded corners]--(-3,4)}--(-3,0.3) node [midway, above] {$ $}; 
\node at (-3.3,2) {$i$};
\draw [->, thick] (9, 0.3){[rounded corners]--(9,4)} --(6.3,4) node [midway, above] {$ $}; 
\node at (9.3,2) {$r$};

\node at (1,3.4) {$(1)$};
\node at (5,3.4) {$(2)$};
\node at (3,1.1) {$(3)$};
\node at (-2,2) {$(4)$};
\node at (8,2) {$(5)$};

\end{tikzpicture}
$$

First, the maps $\eta_0$ and $\epsilon_0$ are defined by the composition indicated by (1) and (2), and the two slanted arrows are given by the composition
\[ ( i \otimes j \otimes \id_Y) \circ (\id_Y \otimes s \otimes r)  = i \otimes e^\vee \otimes r,\]
which is the horizontal arrow in the middle level. Being a tensor of three maps, we can further decompose it to three arrows as indicated by (3).
But if $a$ and $b$ are morphisms, the tensor structure allows us to interchange the order of composition, i.e., 
$$a \otimes b = (\id \otimes b) \circ (a \otimes \id) = (a \otimes \id) \circ (\id \otimes b),$$
and we thus obtain (4) and (5). But the composition of the three arrows at the bottom
\[(\epsilon \otimes \id) \circ (\id \otimes e^\vee \otimes \id) \circ (\id \otimes \eta)\]
is by the definition $(e^\vee)^\vee = e$, since taking dual is an (anti)-involution. In summary, the composition $(\epsilon_0 \otimes \id_Y) \circ (\id_Y \otimes \eta_0)=\id_Y$ can be identified, by going through the outer arrows, as 
$$ r \circ e \circ i = r \circ i \circ r \circ i = \id_Y.$$

The equality $(\id_Z \otimes \epsilon_0 ) \circ (\eta_0 \otimes \id_Z)=\id_Z$ can be checked similarly by reversing the roles of $Y$ and $Z$.

For the trace, we have
\[\Tr(e,X)=\Tr(i\circ r,X)=\Tr(r\circ i,Y)=\Tr(\id_Y,Y),\]
where the second equality is the commutativity of trace \cite[Proposition 2.4]{Ponto-Shulman}.\end{proof}

In all cases of interest to us, splitting of the dual idempotent is verified by the following: 

\begin{lemma} \label{closed idempotent splits}
If $\cM$ is closed, then the dual idempotent splits. 
\end{lemma}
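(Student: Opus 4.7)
The plan is to use the identification, available when $\cM$ is closed, of the dual of a dualizable object with the internal hom into the unit, and to observe that contravariant functoriality of this internal hom automatically sends any splitting of $e$ to a splitting of $e^\vee$.

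In detail, write $[-,-]$ for the internal hom of $\cM$. For a dualizable object $X$, comparing the two natural isomorphisms $\Hom_\cM(A \otimes X, 1_\cM) \cong \Hom_\cM(A, X^\vee)$ (supplied by the duality data $(\eta,\epsilon)$) and $\Hom_\cM(A \otimes X, 1_\cM) \cong \Hom_\cM(A, [X, 1_\cM])$ (supplied by closedness) yields, via the Yoneda lemma, a natural isomorphism $X^\vee \cong [X, 1_\cM]$. Applying the same argument to morphisms identifies the dual $f^\vee$ of a map $f$ between dualizable objects with $[f, 1_\cM]$. This naturality statement is the one step requiring a small formal verification; it is standard and is the main piece of bookkeeping in the proof.

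Granted this, I would apply the contravariant functor $[-, 1_\cM]$ to the factorization $X \xrightarrow{r} Y \xrightarrow{i} X$ of $e$. Setting $Z := [Y, 1_\cM]$, $s := [i, 1_\cM]\colon X^\vee \to Z$, and $j := [r, 1_\cM]\colon Z \to X^\vee$, contravariant functoriality gives
\[j \circ s = [i \circ r, 1_\cM] = e^\vee, \qquad s \circ j = [r \circ i, 1_\cM] = [\id_Y, 1_\cM] = \id_Z.\]
Thus $(s, j)$ exhibits $e^\vee$ as split through $Z$, as required. Once the identification $X^\vee \cong [X, 1_\cM]$ has been pinned down compatibly with duals of morphisms, the splitting of $e^\vee$ falls out of pure functoriality with no further work.
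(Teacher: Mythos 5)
Your proof is correct and is essentially identical to the paper's: both identify $X^\vee$ with the internal hom $\underline{\Hom}(X,1_\cM)$ and $e^\vee$ with $\underline{\Hom}(e,1_\cM)$, then take $Z=\underline{\Hom}(Y,1_\cM)$, $s=\underline{\Hom}(i,1_\cM)$, $j=\underline{\Hom}(r,1_\cM)$. Your version just spells out the functoriality computations that the paper leaves implicit.
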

\begin{proof}
    We can naturally identify $X^\vee=\inHom(X,1_\cM)$ and $e^\vee=\inHom(e,1_\cM)$. So, we can take $Z=\inHom(Y,1_\cM)$, $j=\inHom(r,1_\cM)$ and $s=\inHom(i,1_\cM)$.
\end{proof}

\subsection{Extension of scalars}\label{subsection: Extending coefficient}

 Let $\cM$ be a closed symmetric monoidal category and $A\in \CAlg(\cM)$ be a commutative ring object, i.e., there exists a multiplication and unit map,
$$m_A: A \otimes A \rightarrow A \ \text{and} \ u: 1_\cM \rightarrow A$$
satisfying the usual ring axioms up to higher coherence.

Denote by $A \dMod(\cM)$ the category of $A$-module objects in $\cM$. Regard $A$ as a module over itself, we have the (left) regular representation of $A$
\[\rho_A: A\to \inHom(A,A),\quad \rho_A(a)= [b\mapsto m_A(a\otimes b)],\] 
or pass to tensor-hom adjunction defined by
\[m_A \in  \Hom(A\otimes A,A)=\Hom(A,\inHom(A,A)).\]

If we furthermore assume that $\cM$ admits geometric realization, then $A \dMod(\cM)$ admits a symmetric monoidal structure given by the relative tensor $\otimes_A$ with $A$ being its unit. Thus, we have a symmetric monoidal functor 
\[(-) \otimes A: \cM  \rightarrow A \dMod(\cM), \quad
X  \mapsto X \otimes A .\]

\begin{lemma} \label{lem: coefficient-extension}
If $(X, X^\vee, \epsilon, \eta)$ is a dualizing datum in $\cM$, then $ (-)\otimes A$ induces a dualizing datum
\[(X \otimes A, X^\vee \otimes  A,  \epsilon\otimes\id_A ,  \eta\otimes  \id_A)\]
in $A \dMod(\cM)$.
\end{lemma}
\begin{proof}
We notice that there is an identification $(X \otimes A) \otimes_A (X^\vee \otimes A) =  (X \otimes X^\vee) \otimes A$. Then it follows from that $(-)\otimes A$ is a symmetric monoidal functor.
\end{proof}

\begin{lemma}\label{corollary: dualizing data of free modules over locally rigid algebra}For if $A$ and $X$ are dualizable objects with the dualizing data $(A,A^\vee,\epsilon_A,\eta_A )$ and $(X,X^\vee,\epsilon,\eta)$ respectively, we have that $X \otimes \cA$ is dualizable in $A \dMod(\cM)$ with dualizing datum
\[(  X\otimes A, X^\vee \otimes  A^\vee, \epsilon\otimes \epsilon_A, \eta \otimes  \eta_A ).\]
\end{lemma}
\begin{proof}
    A tensor product of two dualizable objects is dualizable.
\end{proof}

We can define $A$-linear relative internal hom $\inHom_A$ on $A \dMod(\cM)$ using $\inHom$ of $\cM$: for $E,M,N$ in $A \dMod(\cM)$, we define $\inHom_A(M,N)\in A \dMod(\cM)$ as the unique object such that
\[\inHom(E,\inHom_A(M,N))= \inHom(E\otimes_A M,N).\]

There is a forgetful map $\inHom_A(M,N) \rightarrow \inHom(M,N)$ defined in the following way: For all $E\in A \dMod(\cM)$ we have
\[\inHom(E,\inHom_A(M,N))=\inHom(E\otimes_A M,N)\rightarrow \inHom(E\otimes M,N)=\inHom(E,\inHom(M,N)). \]
Then we pick $E=\inHom_A(M,N)$, then $\id_E$ defines a map
\[\inHom_A(M,N) \rightarrow \inHom(M,N).\]

\begin{lemma}\label{lemma: restriction fully-faithful}For $X,Y,A \in \cM$  such that $A,X$ are dualizable, we have the natural forgetful map 
\[\inHom_A(X\otimes A, Y\otimes A) \rightarrow \inHom(X\otimes A, Y\otimes A)\]
can be identified with 
\[\inHom(X,Y\otimes A ) \xrightarrow{\id\otimes \rho_A} \inHom(X,Y\otimes \inHom(A,A))\]
for the regular representation $\rho_A$.
\end{lemma}
\begin{proof}By Yoneda and dualizability of $X$, we have
\[\inHom_A(X\otimes A, Y\otimes A)=\inHom(X, Y\otimes A).\]

Then the dualizability of $A$ and $X$ gives similarly that
\[\inHom(X\otimes A, Y\otimes A)=\inHom(X , Y ) \otimes \inHom(A,A)=\inHom(X , Y   \otimes \inHom(A,A)).\]

To verify the natural forgetful functor is indeed $\id\otimes \rho_A$, we play the same game with more careful on morphisms when applying Yoneda.
\end{proof}

Now we specializes to the case when $\cM = \PrLst(\mathbf{k})$. As several coefficient categories show up, we will carefully indicate corresponding coefficient categories we are working with.

Equipping the $\mathbf{k}$-linear Lurie tensor product, $ \PrLst(\mathbf{k})$ is a closed symmetric monoidal category admits geometric realization. The internal hom is the category of $\mathbf{k}$-linear left adjoint functors $\Fun^L$; and for a commutative ring object $\cA$, i.e. $\cA$ is a symmetric monoidal presentable category and there is a symmetric monoidal functor $u: \mathbf{k} \rightarrow \cA$ which is colimit-preserving. The $\cA$-linear relative internal hom in $\cA \dMod(\PrLst(\mathbf{k}))$, which consists of $\cA$-linear functors, is denoted by $\Fun^L_\cA$. 

The regular representation functor is denoted by 
\begin{equation} \label{regular representation} \rho_\cA: \cA\rightarrow \Fun^L(\cA,\cA). \end{equation}

What is relevant to us is the following lemma.
\begin{lemma}\label{lemma: restriction fully-faithful-PrL}Let $\cA$ be a dualizable symmetric monoidal $\mathbf{k}$-linear category. For two $\mathbf{k}$-linear dualizable categories $X$ and $Y$, the $\mathbf{k}$-linear forgetful functor
\begin{equation}\label{equation: forgetful functor between functor categories}
  \Fun^L_\cA( X \otimes\cA,  Y \otimes\cA) \rightarrow \Fun^L(  X\otimes\cA,  Y\otimes\cA)  
\end{equation}
is fully-faithful if the regular representation functor $\rho_\cA: \cA\to \Fun^L(\cA,\cA)$ is fully-faithful.
\end{lemma}
\begin{proof}By Lemma \ref{lemma: restriction fully-faithful}, under the circumstance,we can identify the functor \eqref{equation: forgetful functor between functor categories} as the functor
\[ (X^\vee \otimes Y)\otimes\cA \xrightarrow{\id_{X^\vee\otimes Y} \otimes \rho_\cA}  (X^\vee \otimes Y)\otimes\cA \otimes \cA^\vee .\]

Since that $X^\vee \otimes Y$ is dualizable over $\mathbf{k}$, then it follows from \cite[Theorem 2.2]{Efimov-K-theory} that the above functor is fully-faithful if $\rho_\cA$ is fully-faithful.    
\end{proof}

\begin{corollary}\label{lemma: restriction fully-faithful for quotient}For $\cA,X,Y$ in Lemma \ref{lemma: restriction fully-faithful-PrL}, and we assume that the regular representation $\rho_\cA$ is fully-faithful, then for an $\cA$-linear quotient $\sC$ of $X\otimes \cA $ and an $\cA$-linear full-subcategory $\sD$ of $Y\otimes \cA $, we have the $\mathbf{k}$-linear forgetful functor
\[\Fun^L_\cA( \sC,  \sD) \rightarrow \Fun^L(  \sC,  \sD)\]
is fully-faithful.    
\end{corollary}
 \begin{proof}We have a commutative diagram of $\mathbf{k}$-linear functors 
 \[\begin{tikzcd}
{\Fun^L_\cA( \sC,  \sD)}  \arrow[d,"f_1"] \arrow[r,"A"] & {\Fun^L_\cA( X \otimes\cA,  \sD)} \arrow[r,"B"] \arrow[d,"f_2"] & {\Fun^L_\cA( X \otimes\cA,  Y \otimes\cA)} \arrow[d,"f_3"] \\
{{\Fun^L(  \sC,  \sD)}} \arrow[r,"a"]              & {\Fun^L( X \otimes\cA,  \sD)} \arrow[r,"b"]               & {\Fun^L( X \otimes\cA,  Y \otimes\cA)} .             
\end{tikzcd}\]

Functor $f_3$ is fully-faithful by Lemma \ref{lemma: restriction fully-faithful-PrL} and $b,B$ are fully-faithful since $\sD$ is a full-subcategory of $Y \otimes\cA$. Then we known that $f_2$ is $\mathbf{k}$-linearly fully-faithful. 

Lastly, since $\sC$ is a quotient, we have that $a,A$ are fully-faithful. Then we have $f_1$ is fully-faithful.     
 \end{proof}

 \begin{remark}In general, $\rho_\cA$ need not be fully-faithful (see Remark \ref{remark: non-fully faithful regular rep example}). However we show in Corollary \ref{corollary: m^r_T fully-faithful}  that the regular representation for the Tamarkin category $\sT$ is indeed fully-faithful.  Using this, we will later deduce from Corollary \ref{lemma: restriction fully-faithful for quotient}
 a comparison between the Hochschild cohomology over $\cA$ and over $\mathbf{k}$; see Lemma \ref{lemma: Hochschild comparison}.
 \end{remark}

Lastly, let us remark the enriched hom objects.

Consider $\mathbf{k}$-linear category $\sC \in \PrLst(\mathbf{k})$. For $X,Y \in \sC$, we have a $\mathbf{k}$-linear valued hom object, $\Hom_{\sC/\mathbf{k}}(X,Y)\in \mathbf{k}$. Its definition is similar to the $\cA$-linear valued hom object we explained below when consider $\sC$ just as a presentable stable category in $\PrLst$. Without further clarify, we will just call $\Hom_{\sC/\mathbf{k}}(X,Y)$ as the $\mathbf{k}$-linear Hom of $\sC$, and we will write $\Hom_{\sC}(X,Y)$ if no need to distinguish coefficient there, and $\Hom(X,Y)$ is the category $\sC$ is clear.

For $\cA$ the symmetric monoidal presentable category, as it is presentable, $\cA$ also admits an internal hom object valued in $\cA$, which is denoted by $\sHom_{\cA}$. 

If the $\mathbf{k}$-linear category $\sC$ is a further more $\cA$-module, i.e., there is an action functor $\cA\otimes \sC\rightarrow \sC$ such that the action functor is compatible with $u: \mathbf{k} \rightarrow \cA$ and $m_\cA: \cA\otimes \cA\to \cA$, we can construct the $\cA$-linear relative inner hom in the following way: The $\cA$-module structure produces a functor
\[\cA \rightarrow  \cA \otimes \sC \rightarrow  \sC ,\quad
a \mapsto     a \otimes X \mapsto a\cdot X,\]
where the last functor is the $\cA$-module action on $\sC$. Then we define the $\cA$-valued hom object $\Hom_{\sC/\cA}(X,Y) \in \cA$ as the right adjoint functor of the $\cA$-action functor above, i.e., we have
\begin{equation}\label{lem: for-coefficient}
  \Hom_{\cA/\mathbf{k}}(a, \Hom_{\sC/\cA}(X,Y) ) = \Hom_{\sC/\mathbf{k}}(a\cdot X,Y )\,  \in\mathbf{k}.  
\end{equation}

In particular, if $f: \sC\rightarrow \sD$ is a fully-faithful $\cA$-linear functor, then $f$ induces an equivalence on $\Hom_{\sC/\cA}(X,Y)$ and vice versa. 

By definition and Yoneda, one has that $\sHom_{\cA}=\Hom_{\cA/\cA} $ and 
\[\sHom_{\cA}(a, \Hom_{\sC/\cA}(X,Y) ) = \Hom_{\sC/\cA}(a\cdot X,Y ) \, \in  \cA.\]

\section{Trace of \texorpdfstring{$\Sh_X(M)$}{}} \label{sheaftraces}

For $H$ a locally compact Hausdorff space,  $\Sh(H)$ is a retract of the category of presheaves \cite[Def. 21.1.2.1, Thm. 21.1.6.12, Prop. 21.1.7.1]{Lurie-SAG} and hence
is dualizable.  
It is also easy to explicitly exhibit the unit and counit, using the (closely related) fact
that the natural functor 
$\Sh(H) \otimes \Sh(H) \to \Sh(H \times H)$ is an equivalence, for any locally compact Hausdorff $H$  \cite[Proposition 2.30]{volpe_6_functor}. 

\begin{proposition} \label{dualizability of sheaves}
For $H$ locally compact Hausdorff, 
the category $\Sh(H)$ is its own dual, with unit and co-unit given by
\begin{align*}
     \eta&: {\mathbf{k}} \rightarrow \Sh(H\times H),\quad V \mapsto \Delta_! a^*V,\\
    \epsilon &: \Sh(H\times H) \rightarrow {\mathbf{k}},\quad F \mapsto a_!\Delta^*F,
\end{align*}
where $a: H\rightarrow \pt$ is the constant map and $\Delta: H\rightarrow H\times H$ is the diagonal map.
\end{proposition}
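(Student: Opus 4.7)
The plan is to verify the two triangle identities for $(\eta, \epsilon)$ by direct computation, after invoking the $\mathbf{k}$-linear equivalence $\Sh(H) \otimes \Sh(H) \simeq \Sh(H \times H)$ of \cite[Proposition 2.30]{volpe_6_functor}. Under this equivalence the tensor product of sheaves corresponds to external tensor product $\boxtimes$, so that both candidate maps $\eta$ and $\epsilon$ are described purely in terms of the six-functor formalism on products of $H$.

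For $X \in \Sh(H)$ I would compute the composite $\Sh(H) \xrightarrow{\id \otimes \eta} \Sh(H \times H \times H) \xrightarrow{\epsilon \otimes \id} \Sh(H)$ on $X$. Here $\id \otimes \eta$ sends $X$ to $X \boxtimes \Delta_! 1_H$, while $\epsilon \otimes \id$ corresponds to $F \mapsto \pi_!\, \Delta_{12}^* F$, where $\Delta_{12}: H \times H \to H^3$ is the map $(x,y) \mapsto (x, x, y)$ and $\pi: H \times H \to H$ is projection onto the second factor. Writing $X \boxtimes \Delta_! 1_H = p_1^* X \otimes q_{23}^*(\Delta_! 1_H)$ for the evident projections $p_1: H^3 \to H$ and $q_{23}: H^3 \to H^2$, and using $p_1 \circ \Delta_{12} = \mathrm{pr}_1$ together with $q_{23} \circ \Delta_{12} = \id_{H^2}$, we obtain
\[ \Delta_{12}^*(X \boxtimes \Delta_! 1_H) \;=\; \mathrm{pr}_1^* X \otimes \Delta_! 1_H. \]
The projection formula combined with $\mathrm{pr}_1 \circ \Delta = \id_H$ reduces this to $\Delta_! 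X$, and a final application of $\pi_!$ (using $\pi \circ \Delta = \id_H$) gives back $X$, as required. The other triangle identity is the mirror-image computation obtained by interchanging the first and third copies of $H$ in $H^3$: apply $\eta \otimes \id$ followed by $\id \otimes \epsilon$ and carry out the analogous chain of base changes and projection-formula simplifications.

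The principal obstacle is bookkeeping rather than depth: keeping clear which diagonals and projections act on which factors of $H^3$, translating the abstract triangle identities into explicit compositions of six-functor operations, and confirming that the auxiliary maps compose to identities as claimed. Once the dictionary between the tensor-categorical structure of $\PrLst(\mathbf{k})$ and the six-functor operations on $\Sh(-)$ is laid out via Volpe's equivalence, each simplification is a single application of proper base change or the projection formula, both of which are standard for sheaves on locally compact Hausdorff spaces.
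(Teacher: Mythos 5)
Your proposal is correct and matches the paper's approach: the paper's proof simply declares the verification of the triangle identities to be "an elementary exercise in the use of base change," and you have carried out that exercise explicitly (via Volpe's equivalence $\Sh(H)\otimes\Sh(H)\simeq\Sh(H\times H)$, the projection formula, and the compatibilities $p_1\circ\Delta_{12}=\mathrm{pr}_1$, $q_{23}\circ\Delta_{12}=\id$, $\pi\circ\Delta=\id$). The computation $\Delta_{12}^*(X\boxtimes\Delta_!1_H)=\Delta_!X$ followed by $\pi_!\Delta_!X=X$ is exactly the intended check, and the second triangle identity is indeed symmetric.
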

\begin{proof}
Computing $(1 \otimes \eta) \circ (\epsilon \otimes 1)$ is an elementary exercise in the use of base change. 
\end{proof} 

\begin{proposition}\label{Kernel-functor corresponding}
Via the above identification  $\Sh(H) = \Sh(H)^\vee$, 
the composition of equivalences
\[\Sh(H_1 \times H_2) \xleftarrow{\sim} \Sh(H_1) \otimes \Sh(H_2) = \Sh(H_1)^\vee \otimes \Sh(H_2) \xrightarrow{\sim} \Fun^L(\Sh(H_1), \Sh(H_2))\]
sends $K \in \Sh(H_1 \times H_2)$ to the integral transform $\Phi_K=[F\mapsto p_{2!}(K\otimes p_1^*F)]$.
\end{proposition}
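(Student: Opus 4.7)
The plan is to unravel the composition by tracing a general element $K$ through the three equivalences and use the explicit formulas from Proposition \ref{dualizability of sheaves}. Recall that if $X,Y$ are dualizable objects in a symmetric monoidal category $\cM$, the equivalence $X^\vee \otimes Y \simeq \Fun^L(X,Y)$ sends $\varphi \in X^\vee \otimes Y$ to the functor
\[X \simeq \mathbf{k}\otimes X \xrightarrow{\varphi\otimes \id_X} X^\vee \otimes Y\otimes X \simeq X\otimes X^\vee\otimes Y \xrightarrow{\epsilon_X\otimes \id_Y} Y,\]
i.e.\ $\varphi \mapsto (\epsilon_X \otimes \id_Y)(- \otimes \varphi)$. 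Applied to our situation, with $X=\Sh(H_1)$ and $Y=\Sh(H_2)$, this means that a given $K \in \Sh(H_1)\otimes \Sh(H_2)$ corresponds to the functor
\[F \longmapsto (\epsilon_{\Sh(H_1)} \otimes \id_{\Sh(H_2)})(F \otimes K),\]
where $F \otimes K \in \Sh(H_1) \otimes \Sh(H_1) \otimes \Sh(H_2)$.

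Next, I would pass through the Künneth equivalence $\Sh(H_1)\otimes\Sh(H_1)\otimes\Sh(H_2)\simeq \Sh(H_1\times H_1\times H_2)$ of \cite[Proposition 2.30]{volpe_6_functor}, under which $F \otimes K$ corresponds to the external product $F \boxtimes K$, i.e.\ $\pi_1^* F \otimes \pi_{23}^* K$, where $\pi_1$ and $\pi_{23}$ are the projections from $H_1\times H_1\times H_2$ to $H_1$ and $H_1\times H_2$ respectively. Similarly, the functor $\epsilon_{\Sh(H_1)} \otimes \id_{\Sh(H_2)}$ is identified with $(a_1\times \id_{H_2})_!(\Delta_1\times\id_{H_2})^*$, where $a_1 \colon H_1\to \pt$ and $\Delta_1\colon H_1\to H_1\times H_1$; this is simply Proposition \ref{dualizability of sheaves} tensored with $\id_{\Sh(H_2)}$, and the required compatibility of $\otimes$, $!$-pushforward and $*$-pullback with external products is again provided by \cite{volpe_6_functor}.

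It remains to simplify this composition. The map $\Delta_1\times \id_{H_2}\colon H_1\times H_2 \to H_1\times H_1\times H_2$ sends $(x,z)\mapsto (x,x,z)$, so
\[(\Delta_1\times \id_{H_2})^*(F\boxtimes K) \;=\; p_1^* F \otimes K \quad \text{in } \Sh(H_1\times H_2),\]
with $p_1\colon H_1\times H_2\to H_1$. Applying the $!$-pushforward along $a_1\times\id_{H_2}$, which is precisely $p_2\colon H_1\times H_2 \to H_2$, yields
\[\Phi_K(F) \;=\; p_{2!}\bigl(K\otimes p_1^* F\bigr),\]
as claimed.

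The only genuine content is the compatibility of the Künneth equivalence with $\boxtimes$, $f_!$ and $f^*$; once that is granted, the rest is a routine diagram chase. I expect this compatibility, which is the main (but standard) obstacle, to be immediately available from \cite{volpe_6_functor}, so no further work is needed.
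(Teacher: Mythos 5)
Your proof is correct and follows essentially the same route as the paper's: both unwind the equivalence $X^\vee \otimes Y \simeq \Fun^L(X,Y)$ into the composition $(\epsilon_{\Sh(H_1)} \otimes \id_{\Sh(H_2)})\circ (-\otimes K)$, transport it through the K\"unneth equivalence to $\Sh(H_1\times H_1\times H_2)$, and then read off $p_{2!}(K\otimes p_1^*F)$ by base change. The only cosmetic difference is that you tensor $K$ on the left and then apply the symmetry, whereas the paper tensors it on the right from the start.
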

\begin{proof}For $K\in \Sh(H_1 \times H_2) \xleftarrow{\sim} \Sh(H_1) \otimes \Sh(H_2) = \Sh(H_1)^\vee \otimes \Sh(H_2)$, the corresponding functor is computed by the composition
\[
    \Sh(H_1) \xrightarrow{\Id_1 \otimes K} \Sh(H_1)\otimes \Sh(H_1 \times H_2) \simeq \Sh(H_1 \times H_1)\otimes \Sh( H_2) \xrightarrow{\epsilon_1\otimes \Id_2} \Sh( H_2).
\]

On objects this sends
\[    F \mapsto F\boxtimes K \mapsto p_{2!}(\Delta_1\times \Id_{H_2})^* (F\boxtimes K) \simeq p_{2!}(K\otimes p_1^*F).\qedhere\] 
\end{proof}

\begin{corollary}\label{trace of sheaves}
There is a natural isomorphism 
\[\Tr(\Phi_K)=\Gamma_c(H, \Delta^*K).\]

In particular, as $\Phi_{1_{\Delta}}=1_{\Sh(H)}$, we have
\[\Tr(1_{\Sh(H)}) = \Gamma_c(H, 1_H).\]
\end{corollary}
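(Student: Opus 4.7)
The plan is to unwind the definition of the trace using the explicit self-duality from Proposition \ref{dualizability of sheaves} and the kernel/functor dictionary from Proposition \ref{Kernel-functor corresponding}. By definition, $\Tr(\Phi_K)$ is the composite
\[ \mathbf{k} \xrightarrow{\eta} \Sh(H)^\vee\otimes\Sh(H) \xrightarrow{\id\otimes\Phi_K} \Sh(H)^\vee\otimes\Sh(H) \xrightarrow{\epsilon} \mathbf{k}, \]
where throughout we use the identifications $\Sh(H)^\vee\otimes\Sh(H)\simeq\Sh(H\times H)\simeq\Fun^L(\Sh(H),\Sh(H))$.

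First I would observe that $\eta(\mathbf{k})=\Delta_!\mathbf{k}_H$ by Proposition \ref{dualizability of sheaves}, and that this kernel represents the identity functor under the equivalence of Proposition \ref{Kernel-functor corresponding}; explicitly, $p_{2!}(\Delta_!\mathbf{k}_H\otimes p_1^*F)\simeq p_{2!}\Delta_!(\mathbf{k}_H\otimes\Delta^*p_1^*F)\simeq F$ by base change and the projection formula. Next, under the same identification, the operator $\id\otimes\Phi_K$ corresponds to post-composition with $\Phi_K$ on $\Fun^L(\Sh(H),\Sh(H))$, so it sends the class of $\id$ to that of $\Phi_K$, whose representing kernel is $K$ by Proposition \ref{Kernel-functor corresponding}. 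Applying the explicit counit from Proposition \ref{dualizability of sheaves} then yields $\epsilon(K)=a_!\Delta^*K=\Gamma_c(H,\Delta^*K)$, establishing the first assertion.

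For the final identity, I would specialize to $K=\Delta_!\mathbf{k}_H$, the kernel representing $1_{\Sh(H)}$. Since $\Delta$ is a closed embedding we have $\Delta^*\Delta_!\simeq\id$, so $\Delta^*K\simeq\mathbf{k}_H=1_H$, yielding $\Tr(1_{\Sh(H)})=\Gamma_c(H,1_H)$.

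I do not anticipate a serious obstacle: the argument is a bookkeeping exercise once Propositions \ref{dualizability of sheaves} and \ref{Kernel-functor corresponding} are in hand. The only point requiring care is the verification that $\id\otimes\Phi_K$ corresponds to post-composition under the identification with the functor category, which is a standard coherence property of the dual-tensor/functor equivalence in the symmetric monoidal $\infty$-category of presentable stable categories.
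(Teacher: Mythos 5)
Your proof is correct and takes essentially the same approach as the paper: unwind the trace as the composite $\epsilon\circ(\mathrm{swap})\circ(\id\otimes\Phi_K)\circ\eta$ using the explicit unit/counit from Proposition \ref{dualizability of sheaves} and the kernel dictionary of Proposition \ref{Kernel-functor corresponding}. The one point you gloss over is the symmetry isomorphism $X^\vee\otimes X \simeq X\otimes X^\vee$ in the middle of the trace composite, which under the kernel identification is $v^*$ for $v$ the swap of factors on $H\times H$; the paper notes this explicitly but observes it acts trivially since $\Delta^*v^*=\Delta^*$, so your conclusion is unaffected.
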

\begin{proof}
We compute: 
\[\Tr(\Phi_K) \cong a_!\Delta^* v^* K  \cong \Gamma_c(H,\Delta^*K). \]
where $v: H \times H \to H \times H$ is the interchange of factors (and is acting trivially in this formula since $\Delta^* v^* = \Delta^*$). 
 \end{proof}

\begin{remark}\label{remark: compactly generated}For validity of Proposition \ref{dualizability of sheaves}, Proposition \ref{Kernel-functor corresponding} and Corollary \ref{trace of sheaves}, we only need that $\mathbf{k}$ is presentable. But for discussion involve microsupport, we do require that $\mathbf{k}$ is compactly generated as explained in \cite[Remark 4.24.]{Efimov-K-theory}.
\end{remark} 

Let us turn to the category of sheaves with some prescribed microsupport. 
For a closed subset $X \subseteq S^* M$, the inclusion $\Sh_X(M) \hookrightarrow \Sh(M)$ is continuous and co-continuous, so in particular has for formal reasons a left adjoint, which we denote $\iota_X^*: \Sh(M) \to \Sh_X(M)$.

In fact, this functor is realized by the integral transform $\Phi_K$ when $K = \iota_{-X \times^c X}^*(1_{\Delta_M})$ where, for another closed subset $Y \coloneqq S^* N$, where the product $X \times^c Y$ is the `conic' product in $S^*(M \times N)$ which is given by
\[X \times^c Y \coloneqq \left( \left( (\Rp X \cup 0_M) \times (\Rp Y \cup 0_N) \right) \setminus 0_{M \times N}  \right) / \Rp.\]

This assertion is a special case of the following proposition:

\begin{proposition}[{\cite[Lemma 7.10]{Kuo-Li-duality-spherical}}]\label{projector kunneth}
For any sheaf kernel $K \in \Sh( M \times M)$, the three functors from $\Sh(M)$ to $\Sh_X(M)$, $\iota_X^* \circ \Phi_K \circ \iota_X^*$,  $\Phi_{\iota_{-X \times X}^*(K)}$, and $ \Phi_{\iota_{-X \times X}^*(K)} \circ \iota_X^*$ are  all equivalent to each other.

In particular, $\iota_X^*  = \Phi_{\iota_{-X \times X}^*(1_\Delta)}$.
\end{proposition}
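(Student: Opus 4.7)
The plan is to reduce the three-way equivalence to the single main equivalence $\iota_X^* \circ \Phi_K \circ \iota_X^* \simeq \Phi_{\iota_{-X \times X}^*(K)}$. Specializing to $K = 1_\Delta$ (so that $\Phi_K = \id$) and using idempotence $\iota_X^* \circ \iota_X^* \simeq \iota_X^*$ yields the ``in particular'' statement. Precomposing the main equivalence with $\iota_X^*$ and again using idempotence produces $\Phi_{\iota_{-X \times X}^*(K)} \simeq \Phi_{\iota_{-X \times X}^*(K)} \circ \iota_X^*$, giving the equivalence of the second and third functors.

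For the main equivalence, my strategy is to identify both sides as the reflection of $\Phi_K$ onto a common reflective subcategory of $\Fun^L(\Sh(M), \Sh(M))$. Under the kernel-functor equivalence $\Phi: \Sh(M \times M) \xrightarrow{\sim} \Fun^L(\Sh(M), \Sh(M))$ of Proposition \ref{Kernel-functor corresponding}, the endo-operation $(-) \mapsto \iota_X^* \circ (-) \circ \iota_X^*$ is a projector on $\Fun^L$ whose image $\cC_0$ consists of functors $\Psi$ that land in $\Sh_X(M)$ and satisfy $\Psi \circ \iota_X^* \simeq \Psi$. I would then show that under $\Phi$ the subcategory $\cC_0$ corresponds to $\Sh_{-X \times^c X}(M \times M)$; by uniqueness of left adjoints onto a given subcategory, both $\iota_X^* \Phi_K \iota_X^*$ and $\Phi_{\iota_{-X \times X}^*(K)}$ are then identified as the reflection of $\Phi_K$ into this common subcategory.

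The forward inclusion $\Phi(\Sh_{-X \times^c X}) \subseteq \cC_0$ is a direct consequence of the microsupport estimate for integral transforms: from $\Phi_K(F) = p_{2!}(K \otimes p_1^* F)$ and the Kashiwara--Schapira bounds for $p_1^*$, $\otimes$, and $p_{2!}$, we obtain the inclusion $\ms(\Phi_K(F)) \subseteq \{(x_2, \xi_2) : \exists (x_1, \xi_1) \in \ms(F), ((x_1, -\xi_1), (x_2, \xi_2)) \in \ms(K)\}$, so $\ms(K) \subseteq -X \times^c X$ forces $\ms(\Phi_K(F)) \subseteq X$; applying the same argument to the swapped kernel $v^* K$ together with the self-duality of Proposition \ref{dualizability of sheaves} yields $\Phi_K \circ \iota_X^* \simeq \Phi_K$. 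The reverse inclusion $\cC_0 \subseteq \Phi(\Sh_{-X \times^c X})$ will be the main obstacle: given $\Psi \in \cC_0$ with integral kernel $K_\Psi$, I would probe $K_\Psi$ by applying $\Psi$ to sheaves of prescribed small microsupport at each covector of $T^*M$, and use that the outputs lie in $\Sh_X(M)$ to invert the microsupport estimate and conclude $\ms(K_\Psi) \subseteq -X \times^c X$. Careful bookkeeping will be required for the antipodal sign on the first factor, which traces both to the contravariance of $p_1^*$ in the microsupport calculus and to the role of the antipode in the self-duality $\Sh(M)^\vee \simeq \Sh(M)$.
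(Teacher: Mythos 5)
Your reductions at the start are fine: the three-way equivalence does follow from the single equivalence $\iota_X^* \circ \Phi_K \circ \iota_X^* \simeq \Phi_{\iota_{-X \times X}^*(K)}$ together with idempotence of $\iota_X^*$, and the ``in particular'' clause is the case $K = 1_\Delta$. The forward inclusion $\Phi(\Sh_{-X \times^c X}(M\times M)) \subseteq \cC_0$ is also believable by the standard microsupport estimates (modulo the usual properness caveat for $p_{2!}$). The problem is that your route makes the \emph{reverse} inclusion $\cC_0 \subseteq \Phi(\Sh_{-X \times^c X}(M \times M))$ load-bearing, and this is precisely the step you defer. For $\Phi_{\iota_{-X\times X}^*(K)}$ to be the reflection of $\Phi_K$ into $\cC_0$, you need $\Hom(\iota_{-X\times X}^*K, K_\Theta) \simeq \Hom(K, K_\Theta)$ for every $\Theta \in \cC_0$, which by the adjunction defining $\iota_{-X\times X}^*$ amounts exactly to $K_\Theta \in \Sh_{-X\times^c X}(M\times M)$. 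This is a converse-type statement (``a kernel whose transform preserves the microsupport condition must itself satisfy the product microsupport condition''), and the natural attempts to prove it are circular: e.g.\ writing $K_\Theta = (\id \otimes \Theta)(1_\Delta)$ and using $\Theta = \Theta \circ \iota_X^*$ reintroduces the kernel of $\iota_X^*$, which is the very object the proposition computes. Your full claim $\cC_0 \simeq \Sh_{-X\times^c X}(M\times M)$ is essentially the K\"unneth statement $\Sh_X(M)^\vee \otimes \Sh_X(M) \simeq \Sh_{-X\times^c X}(M\times M)$, which the paper only obtains \emph{downstream} of this proposition (Remark \ref{dual-microsupport-condition}, Propositions \ref{proposition: tensor product closed}--\ref{proposition: tensor product open}). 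So as written there is a genuine gap, and filling it by ``probing with sheaves of prescribed small microsupport'' would be substantially harder than the proposition itself.

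The paper's proof avoids all of this by never characterizing a subcategory of kernels: it works entirely with $\Hom$-spaces. The two ingredients are (a) the explicit right adjoint $\Psi_K(G) = p_{1*}\sHom(K, p_2^!G)$ of $\Phi_K$, and (b) the single \emph{easy-direction} estimate that for $F, G \in \Sh_X(M)$ one has $\sHom(p_1^*G, p_2^!F) \in \Sh_{-X\times X}(M\times M)$. With (b), the adjunction defining $\iota_{-X\times X}^*$ gives $\Hom(\iota_{-X\times X}^*K, \sHom(p_1^*G,p_2^!F)) \simeq \Hom(K, \sHom(p_1^*G,p_2^!F))$, i.e.\ $\Hom(\Phi_{\iota_{-X\times X}^*K}(G),F) \simeq \Hom(\Phi_K(G),F) \simeq \Hom(\iota_X^*\Phi_K(G),F)$ for $F,G \in \Sh_X(M)$; a separate adjunction argument using that $\Psi_{\iota_{-X\times X}^*(K)}$ lands in $\Sh_X(M)$ handles the precomposition with $\iota_X^*$. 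I would recommend abandoning the reflective-subcategory characterization and adopting this $\Hom$-level argument: it uses only microsupport estimates applied to one specific sheaf, rather than a classification of which kernels implement functors between the microlocally cut categories.
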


\begin{proof}
All three functors having target in $\Sh_X(M)$ is a standard exercise of microsupport estimation.
The functor $\Phi_K$ has a right adjoint which is given by $\Psi_K \coloneqq [G \mapsto {p_1}_* \sHom(K, p_2^! G)]$.
Thus, for any $G \in \Sh(M)$ and $F \in \Sh_X(M)$, we have
\begin{align*}
&\Hom(\Phi_{\iota_{-X \times X}^*(K)}(G),F)
= \Hom \left( G, \Psi_{\iota_{-X \times X}^*(K)} (F)  \right) \\
=& \Hom \left(\iota_X^*(G), \Psi_{\iota_{-X \times X}^*(K)} (F)  \right)
= \Hom(\Phi_{\iota_{-X \times X}^*(K)} \circ \iota_X^*(G),F),
\end{align*}
and we conclude the latter two are the same. Note we use the fact that $\Psi_{\iota_{-X \times X}^*(K)} (F)$ is in $\Sh_X(M)$ for the third equality.

To show that the first two are the same, by the above, we may restrict them to $\Sh_X(M)$ and assume $G \in \Sh_X(M)$. As a consequence, for $F \in \Sh_X(M)$, we have $\sHom(p_1^* G, p_2^! F) \in \Sh_{-X \times X}(M \times M)$. This implies that,

\vspace{-0.1in}
\begin{align*}
&\Hom(\Phi_{\iota_{-X \times X}^*(K)}(G),F) = \Hom({p_2}_!(\iota_{-X \times X}^* (K) \otimes G), F)
= \Hom \left(\iota_{-X \times X}^* (K), \sHom(p_1^* G, p_2^! F) \right) \\
=& \Hom \left( K, \sHom(p_1^* G, p_2^! F) \right) 
= \Hom( \Phi_K(G),F) 
= \Hom(\iota_X^* \circ \Phi_K(G), F).\qedhere
\end{align*}

\end{proof}

\begin{corollary}
$\Tr(1_{\Sh_X(M)},\Sh_X(M)) = \Tr(\Phi_{\iota_{-X \times X}^* (1_\Delta)},\Sh(M))=\Gamma_c(M,\Delta^*\iota_{-X \times X}^* (1_\Delta)).$  
\end{corollary}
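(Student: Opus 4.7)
The plan is to realize $\Sh_X(M)$ as a retract of $\Sh(M)$ in the ambient symmetric monoidal category $\PrLst(\mathbf{k})$, and then combine Lemma \ref{duality-of-retraction} with Corollary \ref{trace of sheaves} to convert the trace of the identity into the trace of the corresponding projector on $\Sh(M)$, which is already computable via the kernel formalism.

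First I would set up the retraction: the inclusion $\iota_X \colon \Sh_X(M) \hookrightarrow \Sh(M)$ is continuous and cocontinuous, hence it is a morphism in $\PrLst(\mathbf{k})$ and admits a left adjoint $\iota_X^*$ with $\iota_X^* \circ \iota_X \simeq 1_{\Sh_X(M)}$. Consequently the endomorphism $P \coloneqq \iota_X \circ \iota_X^* \colon \Sh(M) \to \Sh(M)$ is an idempotent splitting through $\Sh_X(M)$, exhibiting exactly the data needed to apply Lemma \ref{duality-of-retraction}.

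Next, by Proposition \ref{projector kunneth}, viewed as an endofunctor of $\Sh(M)$ the projector $P$ coincides with the integral transform $\Phi_{\iota_{-X \times X}^*(1_\Delta)}$. I would then apply Lemma \ref{duality-of-retraction} in $\cM = \PrLst(\mathbf{k})$, with $X = \Sh(M)$ (self-dual via Proposition \ref{dualizability of sheaves}), $Y = \Sh_X(M)$, and $e = P$. The splitting hypothesis on the dual idempotent is automatic: $\PrLst(\mathbf{k})$ is closed symmetric monoidal, so Lemma \ref{closed idempotent splits} applies. This yields
\[
\Tr(1_{\Sh_X(M)}, \Sh_X(M)) \;=\; \Tr(P, \Sh(M)) \;=\; \Tr(\Phi_{\iota_{-X \times X}^*(1_\Delta)}, \Sh(M)).
\]
Finally, Corollary \ref{trace of sheaves} applied to the kernel $K = \iota_{-X \times X}^*(1_\Delta)$ identifies the right-hand side with $\Gamma_c(M, \Delta^* \iota_{-X \times X}^*(1_\Delta))$.

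I do not anticipate any serious obstacle, since every ingredient has been proved just above; the only point worth double-checking is that $\iota_X \circ \iota_X^*$ and $\Phi_{\iota_{-X \times X}^*(1_\Delta)}$ really do agree as endomorphisms of the ambient $\Sh(M)$ (not merely after landing in $\Sh_X(M)$), but this is precisely the content of the ``in particular'' clause of Proposition \ref{projector kunneth} combined with the observation that post-composing with $\iota_X$ is just viewing the output as a sheaf on $M$.
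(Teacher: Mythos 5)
Your argument is correct and follows exactly the same route the paper takes (the paper simply cites Corollary \ref{trace of sheaves} and Proposition \ref{projector kunneth} as ``immediate,'' implicitly relying on Lemma \ref{duality-of-retraction} and Lemma \ref{closed idempotent splits} just as you do explicitly). You have merely unpacked the steps, including the mild subtlety that $\iota_X \circ \iota_X^*$ and $\Phi_{\iota_{-X \times X}^*(1_\Delta)}$ agree as endofunctors of the ambient $\Sh(M)$, which is indeed the content of the ``in particular'' clause of Proposition \ref{projector kunneth}.
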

\begin{proof}
    Immediate from Cor. \ref{trace of sheaves} and Prop. \ref{projector kunneth}. 
\end{proof}

\begin{remark} \label{dual-microsupport-condition}
One can show along similar lines that the dual idempotent in the sense of Lemma \ref{duality-of-retraction} is given by $\iota_{X \times -X}^* (1_\Delta) \circ (-): \Sh(M) \rightarrow \Sh(M)$, and that correspondingly $\Sh_X(M)^\vee = \Sh_{-X}(M)$ with unit and counit restrict from 
\[\epsilon_X \coloneqq a_!  \Delta^*: \Sh(M \times M) \rightarrow \mathbf{k}, \ \eta_X \coloneqq \iota_{-X \times X}^* (1_\Delta): \mathbf{k} \rightarrow \Sh(M \times M).\]

We also have that, for $U=T^*M\setminus X$, $\Sh(M;U)=\Sh(M)/\Sh_X(M)$ is dualizable that correspondingly $\Sh(M;U)^\vee = \Sh(M;-U)$ with unit and counit descent from
\[\epsilon_U \coloneqq a_!  \Delta^*: \Sh(M \times M) \rightarrow \mathbf{k}, \ \eta_U \coloneqq \cof{(1_\Delta\rightarrow \iota_{-X \times X}^* (1_\Delta))}: \mathbf{k} \rightarrow \Sh(M \times M).\]

This is a consequence of the following proposition which states that the dual of an integral transform is simply given by the same integral transform with its components swapped.
\end{remark}

\begin{proposition}
Let $K \in \Sh(H_1 \times H_2)$ be a sheaf kernel.
The dual of the integral transform $\Phi_K: \Sh(H_1) \rightarrow \Sh(H_2)$, under Proposition \ref{dualizability of sheaves},
is given by $\Phi_{v^* K}   :\Sh(H_2) \rightarrow \Sh(H_1)$ where $v: H_1 \times H_2 = H_2 \times H_1$ is the swapping map $v(x,y) \coloneqq (y,x)$.
\end{proposition}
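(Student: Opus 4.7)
The plan is to verify the defining relation for the dual morphism directly. In any closed symmetric monoidal category, the dual $f^\vee : Y^\vee \to X^\vee$ of a morphism $f: X \to Y$ between dualizable objects is characterized by the equality of pairings
\[
\epsilon_X \circ (\id_X \otimes f^\vee) \;=\; \epsilon_Y \circ (f \otimes \id_{Y^\vee}): X \otimes Y^\vee \to 1_{\cM}.
\]
Taking $X = \Sh(H)$, $Y = \Sh(H')$, $f = \Phi_K$, with $\Phi_{v^*K}$ as the candidate for $f^\vee$, and using the self-dualities of Proposition \ref{dualizability of sheaves} together with the Künneth equivalence $\Sh(H) \otimes \Sh(H') \simeq \Sh(H \times H')$, this reduces to an equality of two continuous functors $\Sh(H \times H') \to \mathbf{k}$. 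By continuity it suffices to test on external products $F \boxtimes G$.

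A direct computation using the projection formula, in the spirit of the proof of Proposition \ref{Kernel-functor corresponding}, unwinds the right-hand side on $F \boxtimes G$ to
\[
\Gamma_c\bigl(H \times H', \; K \otimes p_1^* F \otimes p_2^* G\bigr),
\]
where $p_1, p_2$ are the projections from $H \times H'$, and the left-hand side to
\[
\Gamma_c\bigl(H' \times H, \; v^*K \otimes q_1^* G \otimes q_2^* F\bigr),
\]
where $q_1, q_2$ are the projections from $H' \times H$. Since $v$ is a homeomorphism with $p_1 \circ v = q_2$ and $p_2 \circ v = q_1$, the second integrand is $v^*$ of the first, and the two compactly supported sections coincide by homeomorphism invariance.

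The only real subtlety is the bookkeeping: one must use that the self-pairing $\epsilon$ from Proposition \ref{dualizability of sheaves} is symmetric (which reflects $v \circ \Delta = \Delta$), so that the two self-dualities in play are compatible with the symmetry of $\PrLst(\mathbf{k})$ and the characterization of $f^\vee$ by a single pairing is legitimate. A more conceptual alternative would be to observe that dualization of morphisms between dualizable objects in any closed symmetric monoidal category corresponds, under the identification $\Fun^L(\cC, \cD) \simeq \cC^\vee \otimes \cD$, to the swap of tensor factors $\cC^\vee \otimes \cD \to \cD \otimes \cC^\vee$; under the Künneth equivalence $\Sh(H) \otimes \Sh(H') \simeq \Sh(H \times H')$ this swap is precisely $v^*$, giving the claim without any calculation.
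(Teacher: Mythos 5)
Your proof is correct, and it takes a genuinely different route from the paper's. The paper's proof directly unwinds the zigzag formula $f^\vee = (\id_{X^\vee}\otimes\epsilon_Y)\circ(\id_{X^\vee}\otimes f\otimes\id_{Y^\vee})\circ(\eta_X\otimes\id_{Y^\vee})$ and reports that this comes down to lengthy six-functor yoga on the triple product $H'\times H\times H$, analogous to \cite[Prop.\ 3.6.4]{Kashiwara-Schapira-SoM}. You instead use the equivalent pairing characterization $\epsilon_X\circ(\id_X\otimes f^\vee)=\epsilon_Y\circ(f\otimes\id_{Y^\vee})$ of the dual morphism, which lets you test on external products $F\boxtimes G$ and, after one use of the projection formula on each side, reduces everything to a single identity of compactly supported sections on the \emph{double} product, matched up by the homeomorphism $v$. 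This is a real simplification: the triple product never appears, and the verification becomes transparent. Your remark that the symmetry of the pairing (from $v\circ\Delta=\Delta$) is needed to legitimize the one-sided characterization is a point worth making explicitly, since the self-duality data in Proposition \ref{dualizability of sheaves} is being used on both ends. The second, purely categorical argument you sketch --- that under $\Fun^L(\cC,\cD)\simeq\cC^\vee\otimes\cD$, dualization is the symmetry swap $\cC^\vee\otimes\cD\to\cD\otimes\cC^\vee$, which under K\"unneth is exactly $v^*$ --- is cleaner still and in fact makes the computation unnecessary; it would have been worth spelling out which canonical evaluation $(\cD^\vee)^\vee\simeq\cD$ is used, since that is where the symmetry of the self-duality again enters, but that is a minor omission.
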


\begin{proof}
Recall that if $X$ and $Y$ are dualizable, then for a morphism $f: X \rightarrow Y$, the dual $f^\vee$ is given by 
the composition
\[  Y^\vee \xrightarrow{\eta_X \otimes \id_{Y^\vee} } X^\vee \otimes X \otimes Y^\vee
\xrightarrow{ \id_{X^\vee} \otimes f \otimes \id_{Y^\vee}} X^\vee \otimes Y \otimes Y^\vee 
\xrightarrow{\id_{X^\vee} \otimes \epsilon_Y} X^\vee.\]

Following this definition, the proof for the statement becomes a straightforward yet lengthy six-functor yoga on the triple product $H_2 \times H_1 \times H_1$, which is very close, in spirit, to the proof of \cite[Proposition 3.6.4]{Kashiwara-Schapira-SoM}.
\end{proof}

\section{The Tamarkin category of a point} \label{Tamarkin point}

Let us write $\Sh_-(\RR)$ for the category of sheaves with nonpositive microsupport, and correspondingly $\Sh_+(\RR)$ for the sheaves with nonnegative microsupport.  In this section we study the ($\mathbf{k}$-linearly) dualizable category

\[\sT := \Sh(\RR) / \Sh_-(\RR).\]

In Remark \ref{dual-microsupport-condition}, we explain that the dual $\sT^\vee$ is naturally identified with $\Sh(\RR)/\Sh_{+}(\RR)$, which is not identical with $\sT$. However, we can identity $\sT^\vee=\Sh(\RR)/\Sh_{+}(\RR)$ with $\sT$ via the map $\inv:\RR\rightarrow \RR, \quad t\mapsto -t$, which leads to
\begin{proposition}The Tamarkin category $\sT$ are self-dual.
\end{proposition}

\begin{proof}Here, we use the following identification
\[\Sh(\RR^2)=\Sh(\RR)\otimes \Sh(\RR) \xrightarrow{\inv\otimes \id} \Sh(\RR)\otimes \Sh(\RR),\]
which can also be identified with $A_*$ for $A(t_1,t_2)=(-t_1,t_2)$. 

Then for $\delta=A\Delta $ where $\delta:\RR\rightarrow \RR^2, \,\delta(t)=(-t,t)$, we have $\Sh(\RR)$ is self-dual with the dualizing datum $\epsilon'=a_!\delta^*$ and $\eta'=\delta_!a^*$ via the same computation as Proposition \ref{dualizability of sheaves}.

For $\sT$, we notice that $\inv_*: \Sh_{\pm}(\RR)\simeq\Sh_{\mp}(\RR) $, $A_*:\Sh_{+}(\RR)\otimes \Sh_{-}(\RR) \simeq \Sh_{-}(\RR)\otimes \Sh_{-}(\RR)$, and then descent to $A_*: \sT^\vee\otimes \sT \simeq \sT\otimes \sT$. Then the dualizing datum $\epsilon'$ and $\eta'$ of $\Sh(\RR)$ descent to a self-dualizing datum of $\sT$ via Lemma \ref{duality-of-retraction} and Remark \ref{dual-microsupport-condition}.
\end{proof}

We can also think of $\sT$ in terms of 
the left adjoint to the quotient $\Sh(\RR) \to \sT$, which  embeds $\sT \hookrightarrow \Sh_+(\RR)$ as the full subcategory on objects with no compactly supported global sections. 

\begin{proposition}
The following three subcategories of $\Sh(\RR)$ are the same: (i) $\sT$, (ii)$\langle 1_{\RR_{\geq a}}, a \in \RR \rangle$, (iii)$\{F \in \Sh_+(\RR) | \Gamma_c(\RR;F) = 0\}$.

Here, for a set of objects $S \subseteq Obj(\Sh(\RR))$, $\langle S \rangle$ means the smallest subcategory closed under taking colimits containing $S$. 
\end{proposition}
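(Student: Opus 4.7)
The plan is to prove the cycle $(iii) \subseteq (ii) \subseteq (i) \subseteq (iii)$, using the Tamarkin projector $P \colon \Sh(\RR) \to \Sh(\RR)$ defined by $P(F) \coloneqq F \star 1_{\RR_{\geq 0}}$, where $\star$ denotes convolution for $(\RR, +)$ (with unit the skyscraper $\mathbf{k}_{\{0\}}$). I would first observe that the fiber sequence $1_{(0, \infty)} \to 1_{\RR_{\geq 0}} \to \mathbf{k}_{\{0\}}$, convolved with $F$, gives a natural transformation $P(F) \to F$ whose cofiber $F \star 1_{(0, \infty)}[1]$ lies in $\Sh_-(\RR)$. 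Combined with idempotency $1_{\RR_{\geq 0}} \star 1_{\RR_{\geq 0}} \simeq 1_{\RR_{\geq 0}}$ (fiberwise check), this exhibits $P$ as the reflection of $\Sh(\RR)$ onto the image of the left adjoint of $\Sh(\RR) \to \sT$, so the essential image of $P$ is exactly $(i)$.

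The inclusion $(iii) \subseteq (ii)$ is immediate: each generator $1_{\RR_{\geq a}}$ has microsupport in $\{\tau \geq 0\}$ and vanishing $\Gamma_c$ (since $[a,\infty)$ is contractible with one end at infinity), and both conditions pass to colimits (microsupport is subadditive under colimits, and $\Gamma_c = a_!$ is a left adjoint).

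For $(ii) \subseteq (i)$, given $F \in (ii)$ we show the counit $P(F) \to F$ is an equivalence by proving the cofiber $F \star 1_{(0,\infty)}[1]$ vanishes. The microsupport estimate for convolution on $\RR$ gives $ss(F \star 1_{(0, \infty)}) \subseteq \{\tau \geq 0\} \cap \{\tau \leq 0\}$, using $F \in \Sh_+(\RR)$ and $1_{(0,\infty)} \in \Sh_-(\RR)$, so the convolution is locally constant, say $V \otimes 1_\RR$. The K\"unneth formula for $\Gamma_c$ under convolution gives $V[-1] = \Gamma_c(V \otimes 1_\RR) = \Gamma_c F \otimes \Gamma_c 1_{(0,\infty)} = 0$, so $V = 0$.

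For $(i) \subseteq (iii)$, use that $P$ preserves colimits and that any $F \in \Sh(\RR)$ is a colimit of representables $1_U$ for open $U \subseteq \RR$. A fiberwise computation identifies the stalk of $P(F)$ at $t$ with $\Gamma_c((-\infty, t]; F)$, giving $P(1_U) = 0$ when $U$ is unbounded above and $P(1_U) \simeq 1_{\RR_{\geq \sup U}}[-1]$ otherwise. Each case lies in $\langle 1_{\RR_{\geq a}}\rangle$ (interpreted in the stable setting, so closed under the relevant shifts via iterated cofibers). I expect the main technical hurdles to be the microsupport estimate under convolution (a Kashiwara--Schapira calculation) and the verification that $P$ is indeed the reflection onto the image of the left adjoint.
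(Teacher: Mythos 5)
Your proposal is essentially correct, but it takes a genuinely different route from the paper's in two of the three steps, and there are two soft spots you should repair. The paper proves $(iii)\subseteq(i)\cap(ii)$ using the same projector you use (convolution with $1_{\RR_{\geq 0}}$ is exactly the kernel $1_{\{t_2\geq t_1\}}$ in its recollement fiber sequence), and then upgrades both inclusions to equalities in one stroke by showing that the objects $1_{\RR_{\geq a}}$ detect zero in both $(i)$ and $(ii)$ --- the key input being $\Hom(1_{\RR_{\geq a}},F)=\Gamma_{[a,\infty)}(\RR,F)$ plus the non-characteristic deformation lemma --- and invoking a generation criterion. You instead prove $(ii)\subseteq(i)$ by pinching the microsupport of $F\star 1_{(0,\infty)}$ to the zero section and killing the resulting constant sheaf by K\"unneth, and $(i)\subseteq(iii)$ by evaluating the projector on representables; both are correct and more computational. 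One small repair in $(ii)\subseteq(i)$: the pinching is not a formal consequence of a bifunctorial estimate $\Sh_+\times\Sh_-\to\Sh_+\cap\Sh_-$ (the sum map is not proper on the relevant supports); the clean argument is that $F\star 1_{(0,\infty)}=\mathrm{fib}(F\star 1_{\RR_{\geq 0}}\to F)$ with both terms in $\Sh_+(\RR)$ by the estimate $\Sh_+\star\Sh_+\subseteq\Sh_+$ of Guillermou--Schapira, while $F\star 1_{(0,\infty)}\in\Sh_-(\RR)$ by the absorption estimate.

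The first genuine gap is in your identification of $(i)$ with the essential image of $P=(-)\star 1_{\RR_{\geq 0}}$. Idempotency together with $\mathrm{cofib}(PF\to F)\in\Sh_-(\RR)$ only shows that $P$ factors through the quotient and is a section of it; the right adjoint of the quotient is another idempotent section with a different essential image, so these two facts do not pin down $\mathrm{im}(P)$ as the image of the \emph{left} adjoint. What is missing is the left-orthogonality $\Hom(F\star 1_{\RR_{\geq 0}},G)=0$ for all $G\in\Sh_-(\RR)$. This is precisely the computation at the heart of the paper's own proof ($\Gamma_{[a,\infty)}(\RR,G)=0$ for $G\in\Sh_-(\RR)$, by non-characteristic deformation), so you cannot route around it; you have merely left it implicit.

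The second issue is the shift $[-1]$ appearing in $P(1_U)\simeq 1_{\RR_{\geq b}}[-1]$. Your parenthetical ``closed under the relevant shifts via iterated cofibers'' is false: cofibers produce suspensions, never desuspensions, and a stalkwise connectivity argument shows that $1_{\RR_{\geq b}}[-1]$ genuinely fails to lie in the colimit closure as literally defined whenever the unit of $\mathbf{k}$ is connective. The statement must be read with $\langle S\rangle$ meaning the smallest full subcategory containing $S$ and closed under colimits \emph{and shifts} (equivalently, the smallest stable cocomplete subcategory); the paper's proof implicitly uses this reading too, since the generation criterion it cites is of that form. Under that reading your step $(i)\subseteq(iii)$ goes through; under the literal reading, neither your argument nor the paper's does.
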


\begin{proof}
Recall that the recollement $\Sh_-(\RR) \hookrightarrow \Sh(\RR) \twoheadrightarrow \sT$ 
decomposes objects in $\Sh(\RR)$, using the left adjoints, by the fiber sequence 
\[1_{\{t_2 \geq t_1\}} \rightarrow 1_{\Delta_\RR} \rightarrow 1_{\{t_2 > t_1\}}[1],\]
where $(t_1,t_2)$ is the coordinate for $\RR^2$. 

This implies that $\langle 1_{\RR_{\geq a}}, a \in \RR \rangle$ is contained in both $\{F \in \Sh_+(\RR) | \Gamma_c(\RR;F) = 0\}$ and $\sT$, since $1_{\RR_{\geq a}} = 1_{\{t_2 \geq t_1\}} \circ 1_{\{a\}}$. In addition, standard microsupport estimation, e.g., \cite[Proposition 3.2]{Kuo-wrapped-sheaves}, implies that $F \in \Sh_+(\RR)$ so $\sT \subseteq \Sh_+(\RR)$ as well.

To show that both inclusions are in fact equalities, we have to check that, for both cases, $F = 0$ if and only if $\Hom(1_{\RR_{\geq a}}, F) = 0$ by, e.g., \cite[I.1, Proposition 5.4.5]{Gaitsgory-Rozenblyum-DAG-I}. The latter is equivalent to 
\[\Gamma((-\infty,b);F) \xrightarrow{\sim} \Gamma((-\infty,a);F),\]
for all $a \leq b $. 

For any $F \in \Sh_+(\RR)$, the non-characteristic deformation lemma \cite[Proposition 2.7.2]{Kashiwara-Schapira-SoM} implies that $\Gamma((-\infty,a);F) \xrightarrow{\sim} \Gamma((c,a);F)$ for all $c < a$. In short, we conclude that, for $F \in \Sh_+(\RR)$ satisfying $\Hom(1_{\RR_{\geq a}}, F) = 0$ for all $a \in \RR$, we have
\[ \Gamma( (a - \epsilon, a + \epsilon); F) \xrightarrow{\sim} \Gamma( (a - \epsilon, a); F)\]
for all $a\in \RR$ and $\epsilon>0 $. So $F \in \Sh_-(\RR)$. Thus $F \in \Loc(\RR)$ is locally constant, which can only be $0$ if $F$ is also in $\sT$ or $\{F \in \Sh_+(\RR) | \Gamma_c(\RR;F) = 0\}$.
\end{proof}

\subsection{Symmetric monoidal structure  from convolution on \texorpdfstring{$\RR$}{}} \label{t is monoidal}
We recall some results from \cite{Tamarkin-non-displaceability, Guillermou-Schapira}.

Let $s: \RR \times \RR \rightarrow \RR$ be the sum.
Consider the map 
$\star: \Sh(\RR) \times \Sh(\RR) \rightarrow \Sh(\RR)$  which is given by the formula
\[ (G, F) \mapsto s_! (G \boxtimes F) = s_! (p_2^* G \otimes p_1^* F ).\]

This underlies a symmetric monoidal structure, with unit $1_{\{0\}}$.  More generally, if we write the 
translation map  on $\RR$ as $\nT_c(t)=t+c$, then 
one checks readily that $\nT_{c*} H \cong 1_{\{c\}} \star H$. 

For $a, b > 0$,  we note the formulas: 
\begin{equation} \label{R convolution formulas}
1_{(-a,a)}[1] \star 1_{(-b,b)}[1] = 1_{\left(-(a+b),a+b\right)}[1], \qquad \qquad \ 1_{[-a,a]} \star 1_{(-a,a)}[1] = 1_{\{0\}}.
\end{equation}

Using 6-functor formalism over $\mathbf{k}$, we can define a $\sHom^\star$ with the formula
\[\sHom^\star(F,G)=p_{2*}\sHom(p_1^{*}F,s^!G),\quad F,G\in \Sh(\RR),\]
such that, for $F,G,H\in \Sh(\RR)$, 
\[\Hom_{\Sh(\RR)}(F\star G,H)=\Hom_{\Sh(\RR)}(F,\sHom^\star(G,H)).\]
Then we find that $\Sh(\RR)$ is a closed symmetric monoidal category.

\begin{lemma} \label{lem: convolution estimates}
The following estimates hold (see \cite[Corollary 4.14]{Guillermou-Schapira} )
\begin{align*}
    \star : &\Sh(\RR)\times \Sh(\RR)_- \rightarrow \Sh(\RR)_-,\, \text{ or } \Sh(\RR)_-\times \Sh(\RR) \rightarrow \Sh(\RR)_-;\\
   \sHom^\star : &\Sh(\RR)^{op}\times \Sh(\RR)_- \rightarrow \Sh(\RR)_-,\, \text{ or } \Sh(\RR)_-^{op}\times \Sh(\RR) \rightarrow \Sh(\RR)_-.
\end{align*}
\end{lemma}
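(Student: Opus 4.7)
The plan is to derive both assertions directly from the Kashiwara--Schapira microsupport calculus; they are essentially the content of the cited \cite[Corollary 4.14]{Guillermou-Schapira}. The required ingredients are (a) the pullback estimate $ss(f^{*}K) \subseteq f_{d}f_{\pi}^{-1} ss(K)$, (b) the tensor product estimate $ss(K_1 \otimes K_2) \subseteq ss(K_1) + ss(K_2)$, (c) the proper direct image estimate for $f_{!}$, (d) the internal-Hom estimate $ss(\sHom(K_{1},K_{2})) \subseteq ss(K_{2}) + ss(K_{1})^{a}$, and (e) $ss(s^{!}G) = ss(s^{*}G)$, valid because $s$ is a submersion. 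None of these is deep, but chaining them together requires care about which cotangent directions survive.

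For the first line, I would write $F \star G = s_{!}(p_{1}^{*}F \otimes p_{2}^{*}G)$. Since $ss(p_{1}^{*}F)$ lies in the $dt_{1}$-direction and $ss(p_{2}^{*}G)$ in the $dt_{2}$-direction, the tensor estimate yields the product microsupport $\{((t_{1},t_{2}),(\xi,\eta)) : (t_{1},\xi)\in ss(F),\ (t_{2},\eta)\in ss(G)\}$, and the $s_{!}$ estimate---using $ds^{*}(\tau) = (\tau,\tau)$---forces $\xi = \eta = \tau$, giving
\[ss(F \star G) \subseteq \{(t_{1}+t_{2},\tau) : (t_{1},\tau) \in ss(F),\ (t_{2},\tau) \in ss(G)\}.\]
If either factor has microsupport in $\RR \times \RR_{\leq 0}$, so does $F \star G$; this handles both halves of the first line.

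For $\sHom^{\star}(F,G) = p_{2*}\sHom(p_{1}^{*}F, s^{!}G)$, combining (d), (a), and (e) places the microsupport of the inner $\sHom$ inside $\{((t_{1},t_{2}),(\tau-\xi,\tau)) : (t_{1},\xi)\in ss(F),\ (t_{1}+t_{2},\tau) \in ss(G)\}$, after which the $p_{2*}$ estimate retains only covectors of the form $(0,\eta)$, forcing $\xi = \tau = \eta$. The conclusion $\eta \leq 0$ then follows from $\xi \leq 0$ (the case $F \in \Sh(\RR)_{-}$) or from $\tau \leq 0$ (the case $G \in \Sh(\RR)_{-}$). The main obstacle is that the direct image estimates in (c) for $s_{!}$ and for $p_{2*}$ formally require a properness or non-characteristic hypothesis which is not automatic for $F, G$ of unbounded support in $\RR$; this is resolved in \cite{Guillermou-Schapira} either by a compact exhaustion of $\RR$ compatible with the microsupport calculus, or by invoking the Kashiwara--Schapira non-characteristic deformation lemma to reduce locally to a proper situation.
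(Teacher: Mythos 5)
Your derivation follows the same route as the cited reference: the paper offers no proof of this lemma, deferring to \cite[Cor.~4.14]{Guillermou-Schapira}, and that result is obtained by precisely the microsupport calculus you invoke. The Lagrangian bounds you compute for $\star$ and $\sHom^\star$ are correct, and the non-characteristicity hypotheses needed for the tensor and $\sHom$ estimates do hold, because the relevant pullbacks ($p_1^{*}F$ against $p_2^{*}G$, respectively $p_1^{*}F$ against $s^{!}G$) carry microsupport in transverse covector directions of $T^{*}\RR^{2}$.

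The one point worth sharpening is the non-properness of $s$ and $p_2$, which you correctly flag but resolve a bit too quickly. For $\star$ a compact exhaustion of the unconstrained variable does close the gap: $\star$ is cocontinuous in each slot, $s$ is proper on $\supp(F_R)\times\RR$ for $F_R$ a bounded truncation of $F$, and the microsupport of a filtered colimit lies in the closure of the union of the microsupports, which stays inside the closed set $\{\tau\le 0\}$. But this device does not transfer cleanly to $\sHom^\star$: truncating the unconstrained variable works when $G\in\Sh_-(\RR)$ (then $\sHom^\star(-,G)$ sends colimits to limits and $p_2$ is proper on $\supp(F_R)\times\RR$), whereas when $F\in\Sh_-(\RR)$ truncating $F$ destroys the $\Sh_-$ hypothesis, and $\sHom^\star(F,-)$ is continuous but not obviously cocontinuous, so truncating $G$ does not help either. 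The reference handles all cases uniformly, essentially by the coordinate change $(t_1,t_2)\mapsto(t_1,t_1+t_2)$ reducing $s$ to a projection and then applying a pushforward estimate adapted to the $\gamma$-topology, a non-proper variant of \cite[Prop.~5.4.4]{Kashiwara-Schapira-SoM}; see \cite[\S 3]{Guillermou-Schapira}. Your approach is correct, but ``compact exhaustion'' alone is not the right keyword for both halves of the $\sHom^\star$ claim.
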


Lemma \ref{lem: convolution estimates} implies that
the closed symmetric monoidal structure determined by $\star$
descends to $\sT$.  Note that in $\sT$, we have 
$[1_{\RR_{\geq 0}}]\cong [1_{\{0\}}]$, giving two useful expressions for
the unit.  
Let us note also that the inclusion $\sT \hookrightarrow \Sh(\RR), [F] \mapsto F \star 1_{\RR_{\geq 0}}$ respects the monoidal structure $\star$.  Indeed: $ 1_{\RR_{\geq 0}}\star1_{\RR_{\geq 0}}=1_{\RR_{\geq 0}}$, so
\[([F]\star [G])\star 1_{\RR_{\geq 0}}=[F\star G] \star 1_{\RR_{\geq 0}}= F\star G \star 1_{\RR_{\geq 0}}=(F \star 1_{\RR_{\geq 0}}) \star (G\star 1_{\RR_{\geq 0}}).\]

\begin{lemma}\label{lemma: hom computation}For $F,G\in \sT$, we have
\[\Hom_{\sT}(F,\nT_{c*}G)=\Hom_{\Sh(\RR)}(F\star 1_{\RR_{\geq 0}},\nT_{c*}G)=\Gamma_{[-c,\infty)}(\RR,\sHom^{\star}(F,G)).\]
\end{lemma}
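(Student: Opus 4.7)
The plan is to chain formal adjunctions. The first equality records the defining adjunction of the left-adjoint embedding: the paragraph preceding the lemma identifies $[F]\mapsto F\star 1_{\RR_{\geq 0}}$ with the left adjoint to the quotient $q:\Sh(\RR)\to\sT$. Since $G\in\sT$ satisfies $G\simeq G\star 1_{\RR_{\geq 0}}$, the object $\nT_{c*}G=1_{\{c\}}\star G$ also lies in this image and represents its own class in $\sT$, so the adjunction applied to $(F,\nT_{c*}G)$ yields $\Hom_{\sT}(F,\nT_{c*}G)=\Hom_{\Sh(\RR)}(F\star 1_{\RR_{\geq 0}},\nT_{c*}G)$.

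For the second equality I plan three successive moves. First, commutativity of $\star$ and the closed monoidal adjunction $\star\dashv \sHom^\star$ rewrite this as $\Hom_{\Sh(\RR)}(1_{\RR_{\geq 0}},\sHom^\star(F,\nT_{c*}G))$. Second, since $\nT_{c*}=1_{\{c\}}\star(-)$ is an autoequivalence of $\Sh(\RR)$ with inverse $\nT_{-c*}$, one checks by testing against an arbitrary object (and using invertibility of $1_{\{c\}}$) that $\sHom^\star(F,\nT_{c*}G)\simeq \nT_{c*}\sHom^\star(F,G)$. Third, the $(\nT_c^*,\nT_{c*})$ adjunction (valid since $\nT_c$ is a homeomorphism) rewrites the hom as $\Hom_{\Sh(\RR)}(\nT_c^*1_{\RR_{\geq 0}},\sHom^\star(F,G))$, and a stalk computation gives $\nT_c^*1_{\RR_{\geq 0}}=1_{\RR_{\geq -c}}$.

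The proof then closes with the standard identification $\Hom_{\Sh(\RR)}(1_Z,H)=\Gamma_Z(\RR,H)$ for closed $Z\subseteq\RR$ (which follows by writing $1_Z=i_!i^*1_\RR$ and using the $i_!\dashv i^!$ adjunction for the closed embedding $i:Z\hookrightarrow\RR$), applied with $Z=[-c,\infty)$. I do not anticipate a serious obstacle: every step is a formal adjunction or a routine sheaf-theoretic identification. The only place where some care is needed is the identification of the left adjoint to the quotient with convolution by $1_{\RR_{\geq 0}}$, which is exactly what the paragraph preceding the lemma records.
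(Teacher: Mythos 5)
Your proof is correct and follows essentially the same route as the paper's: the first equality is the adjunction between the quotient $\Sh(\RR)\to\sT$ and its fully faithful left adjoint $F\mapsto F\star 1_{\RR_{\geq 0}}$ (the paper cites \cite[(61)]{Guillermou-Schapira} for this), and the second is the $\star\dashv\sHom^\star$ adjunction combined with moving the translation $\nT_{c*}$ across, which the paper does by rewriting the source as $F\star\nT_{-c*}1_{\RR_{\geq 0}}=F\star 1_{\RR_{\geq -c}}$ rather than commuting $\nT_{c*}$ past $\sHom^\star$ as you do — a purely cosmetic reordering of the same formal steps.
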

\begin{proof}
    The first isomorphism comes from \cite[(61)]{Guillermou-Schapira}, and the second comes from adjunction and $\Hom_{\Sh(\RR)}(F\star 1_{\RR_{\geq 0}},\nT_{c*}G)\simeq \Hom_{\Sh(\RR)}(F\star \nT_{-c*}1_{\RR_{\geq 0}},G)$.
\end{proof}
\begin{remark}\label{remark: k-linear hom}From now on, we will use $\Hom$, the $\mathbf{k}$-linear hom, for equivalently $\Hom_{\sT/\mathbf{k}}$ or $\Hom_{\Sh(\RR)/\mathbf{k}}$ for objects in $\sT \hookrightarrow \Sh(\RR)$. We also recall that  $\sHom^\star$ is by definition $\sHom_\sT$ for $\cA=\sT$.  
\end{remark}

\subsection{Filtered complexes}
\label{filtered complexes}

Consider the symmetric monoidal category $(\vec{\RR}, +)$ associated to the ordered group $\RR$  (there's a map $a \to b$ if $a \le b$).  

\begin{lemma} \label{vecr as sheaves}
    There is a symmetric monoidal functor 
    \[(\vec{\RR},+)  \rightarrow  (\Sh(\RR),\star) ,\qquad
      c \mapsto 1_{\RR_{\ge c}} .
    \]
\end{lemma}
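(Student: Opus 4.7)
My plan is a direct construction, with the real content concentrated in one calculation.

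The key step is the monoidal identity $1_{\RR_{\ge a}} \star 1_{\RR_{\ge b}} \cong 1_{\RR_{\ge a+b}}$. Unpacking the definition, the left-hand side is $s_!(1_{\RR_{\ge a} \times \RR_{\ge b}})$, whose stalk at $t \in \RR$ is the compactly supported cohomology of the fiber $s^{-1}(t) \cap (\RR_{\ge a} \times \RR_{\ge b})$. This fiber is empty if $t < a+b$ and is a compact interval (of length $t-a-b$, hence contractible) if $t \ge a+b$, so the stalk is $0$ for $t < a+b$ and $\mathbf{k}$ otherwise, matching $1_{\RR_{\ge a+b}}$; proper base change upgrades the stalkwise identification to a sheaf isomorphism. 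For $a \le b$, the closed inclusion $\RR_{\ge b} \hookrightarrow \RR_{\ge a}$ induces the restriction morphism $1_{\RR_{\ge a}} \to 1_{\RR_{\ge b}}$, giving functoriality on the poset (composition is automatic since $\vec{\RR}$ has at most one morphism between any two objects).

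To package this into a symmetric monoidal functor, I would factor through $\sT$: the assignment $c \mapsto [1_{\RR_{\ge c}}]$ defines a functor $\vec{\RR} \to \sT$ that strictly sends $0$ to the monoidal unit $[1_{\RR_{\ge 0}}]$ of $\sT$ recorded in \S\ref{t is monoidal}. Multiplicativity is the key identity above, while higher associators, unitors and symmetries are inherited from the symmetric monoidal structure on $\sT$. Post-composing with the monoidal inclusion $[F] \mapsto F \star 1_{\RR_{\ge 0}}$ of $\sT$ into $\Sh(\RR)$, already noted to respect $\star$ at the end of \S\ref{t is monoidal}, produces the functor of the statement.

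The main obstacle is not any individual computation but rather the bookkeeping of $\infty$-categorical higher coherences; the factoring-through-$\sT$ device isolates all of this into the standing monoidality of $\sT$ and of its embedding into $\Sh(\RR)$.
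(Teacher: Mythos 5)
Your argument is correct and is essentially the paper's proof: the paper reduces the lemma to the isomorphism $1_{\RR_{\ge a}} \star 1_{\RR_{\ge b}} \cong 1_{\RR_{\ge a+b}}$ together with the canonical maps $1_{\RR_{\ge c}} \to 1_{\RR_{\ge d}}$ for $c \le d$ and declares both straightforward, whereas you actually carry out the stalk/proper-base-change computation. The only soft spot is the claim that factoring through $\sT$ takes care of the higher coherences --- a symmetric monoidal functor into $\sT$ requires exactly the same coherence data as one into $(\Sh(\RR),\star)$, so that device relocates rather than resolves the bookkeeping --- but the paper's own proof is equally silent on this point.
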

\begin{proof}
    The content of the assertion is that
    there are natural isomorphisms
    $1_{\RR_{\ge a}}  \star 1_{\RR_{\ge b}}  = 1_{\RR_{\ge a + b}}$ and, when $d\geq c$, canonical maps $1_{\RR\geq c}\rightarrow 1_{\RR\geq d}$.  This is straightforward.  
\end{proof}

We may of course further compose with $(\Sh(\RR), \star) \to \sT$.  Pullback of Yoneda modules defines a 
functor 
\begin{equation} \label{equation: Yoneda module}
    \Gamma_{[\cdot, \infty)} :  \sT  \to \Fun(\vec{\RR}^{op}, \mathbf{k}), \quad 
    G  \mapsto  [c \mapsto \Hom(1_{\RR_{\ge c}}, G)].
\end{equation}
where the image carries the natural `Hopf algebra' monoidal structure on functors out of a monoidal category. 
We regard the target as a version of `$\RR$-filtered complexes'.

We choose the notation for this functor because:
\[\Gamma_{[c,\infty)}(\RR,-)\simeq \Hom_{\Sh(\RR)}(1_{\RR_{\geq c}},-):\Sh(\RR)\rightarrow \mathbf{k}.\]

\begin{remark} 
By the microlocal Morse lemma, 
$\Gamma_{[c,\infty)}(\RR,H)=0$ when $H\in \Sh_-(\RR)$.  Thus always 
$\Gamma_{[c, \infty)}(\RR, H)$ depends only on
the isomorphism class $[H] \in \sT$,  and
\[\Gamma_{[c, \infty)}(\RR, H) =  \Gamma_{[\cdot, \infty)} ([H])(c).\]
\end{remark}

Let us introduce notation for the categories
of $\RR$-filtered objects: 
\begin{equation} \label{R filtrations}
    Filt^+ := \Fun(\vec{\RR}, \mathbf{k}), \qquad \qquad 
Filt^- := \Fun(\vec{\RR}^{op}, \mathbf{k}),
\end{equation}
which have increasing and decreasing filtrations, respectively.   

\begin{definition}
    For a filtered complex $F \in Filt^{\pm}$ and an element $x \in F(s)$,\footnote{We allow ourselves the following standard abuse of language: for $M \in \mathbf{k}$, by `element of $M$' we mean `map from $1_{\mathbf{k}} \to M$.  An element `is zero' if the map factors through the zero object, etc.} we define the {\em persistence}:
\[Per(x) := \inf \{\epsilon \, | \, \mbox{the image of $x$ in $F(s \pm \epsilon)$ is zero}\}.\]
    
    We say $F \in Filt^{\pm}$ is a {\em persistence module} if every nonzero element has nonzero persistence.\footnote{In the literature, the term `persistence module' is used for various flavors of $\RR$-filtered complexes; here we reserve it for this kind.} We say $F \in Filt^{\pm}$ is {\em torsion} if every element has bounded persistence. 
\end{definition}

\begin{lemma}
     $\Gamma_{[\cdot, \infty)}$ embeds $\sT$ fully faithfully in the torsion persistence modules in $Filt^-$.  
\end{lemma}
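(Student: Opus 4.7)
The functor $\Gamma_{[\cdot,\infty)}$ is the nerve associated to $J: \vec{\RR} \to \sT$, $c \mapsto 1_{\RR_{\geq c}}$; I would deduce both claims from this presentation. The first step is to verify $J$ is fully faithful: by Lemma~\ref{lemma: hom computation} together with $1_{\RR_{\geq c}} \star 1_{\RR_{\geq 0}} \cong 1_{\RR_{\geq c}}$, we reduce to $\Hom_{\Sh(\RR)}(1_{\RR_{\geq c}}, 1_{\RR_{\geq d}})$, and applying $\Hom(-, 1_{\RR_{\geq d}})$ to the cofiber sequence $1_{(-\infty, c)} \to 1_\RR \to 1_{\RR_{\geq c}}$ gives the fiber of the restriction map $\Gamma(\RR, 1_{\RR_{\geq d}}) \to \Gamma((-\infty, c), 1_{\RR_{\geq d}})$ --- equal to $\mathbf{k}$ when $c \leq d$ and to $0$ otherwise, matching $\Hom_{\vec{\RR}}(c, d)$.

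Given $J$ fully faithful, $\Gamma_{[\cdot,\infty)}$ admits a left adjoint $L$ (the left Kan extension of $J$ along Yoneda) sending each representable at $c$ to $1_{\RR_{\geq c}}$. The full faithfulness of $\Gamma_{[\cdot,\infty)}$ amounts to the counit $L\Gamma_{[\cdot,\infty)}(G) \to G$ being an isomorphism; this holds by definition on the generators $G = 1_{\RR_{\geq a}}$, and extends to all $G \in \sT$ using the proposition at the start of \S \ref{Tamarkin point} that $\{1_{\RR_{\geq a}}\}$ generates $\sT$ under colimits. For the torsion persistence claim, the Yoneda module $\Gamma_{[\cdot,\infty)}(1_{\RR_{\geq a}})$ of a generator is the representable at $a$, an interval module whose elements at height $s$ have finite persistence $|a-s|$; writing a general $G$ as a colimit of generators expresses its Yoneda module as the corresponding colimit of interval modules, which remains a torsion persistence module.

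The main obstacle is the density step: since the generators $1_{\RR_{\geq c}}$ are not compact in $\sT$ (for instance $\colim_n 1_{\RR_{\geq n}} = 0$ while $\colim_n \Hom_\sT(1_{\RR_{\geq 0}}, 1_{\RR_{\geq n}}) \neq 0$), the standard compact-generator proof of density is unavailable; one must argue more directly, perhaps by checking the counit isomorphism along the explicit tautological colimits that present each $G \in \sT$ via the comma category $J/G$, or by an explicit small-object argument compatible with the convolution structure.
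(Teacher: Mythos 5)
Your calculation of $\Hom_{\sT}(1_{\RR_{\geq c}}, 1_{\RR_{\geq d}})$ is correct, and so is your observation that the density step is a genuine obstacle (the proposition at the start of \S\ref{Tamarkin point} only gives generation under colimits, which by itself does not imply density when the generators are not compact). However, there is a second, unacknowledged gap which also affects the second half of the argument: the functor $\Gamma_{[\cdot,\infty)}$ does \emph{not} preserve colimits, so the step ``writing a general $G$ as a colimit of generators expresses its Yoneda module as the corresponding colimit of interval modules'' is false. Concretely, take $c_\alpha \nearrow c$ from strictly below; then $\colim_\alpha 1_{\RR_{\geq c_\alpha}} \cong 1_{\RR_{\geq c}}$ in $\sT$, and its Yoneda module is the representable at $c$, a persistence module. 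But the pointwise colimit of the Yoneda modules of $1_{\RR_{\geq c_\alpha}}$ equals $[x \mapsto \mathbf{k} \text{ if } x < c,\ 0 \text{ else}]$, which fails the persistence condition at $x = c$ (its left limit is $\mathbf{k}$ but its value is $0$). So the colimit argument cannot prove the ``torsion persistence module'' half of the statement, and the same failure is what makes the density argument non-trivial. Also, as a smaller point, the example you give for non-compactness is wrong: $\colim_{n\to\infty} 1_{\RR_{\geq n}} = 0$ but also $\colim_n \Hom_\sT(1_{\RR_{\geq 0}}, 1_{\RR_{\geq n}}) = \colim_n 0 = 0$, so it is not a counterexample; you want $\colim_n 1_{\RR_{\geq -n}} = 1_\RR = 0$ in $\sT$, for which $\colim_n \Hom_\sT(1_{\RR_{\geq 0}}, 1_{\RR_{\geq -n}}) = \mathbf{k} \neq 0$.

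The paper takes a genuinely different route. For full faithfulness it simply cites \cite[\S 1.4]{Kashiwara-Schapira-persistent}. For the constraint on the image, the paper does not push colimits through $\Gamma_{[\cdot,\infty)}$; instead it uses that $J: \vec{\RR} \to \sT$ \emph{does} preserve filtered colimits — precisely the computation $\colim_\alpha 1_{\RR_{\geq c_\alpha}} \cong 1_{\RR_{\geq \sup c_\alpha}}$ (and $\colim_{c\to\infty} 1_{\RR_{\geq c}} = 0$). Since $\Hom(-, G)$ turns colimits in the first variable into limits, each $\Gamma_{[\cdot,\infty)}(G) = \Hom(J(-), G)$ automatically sends the filtered colimits in $\vec{\RR}$ to limits: the sup-from-below case is exactly the persistence condition, and the colimit-to-infinity case is exactly the torsion condition. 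This argument is cleaner because it only needs a colimit-preservation property of $J$, not of the nerve $\Gamma_{[\cdot,\infty)}$, which is the property that actually fails.
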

\begin{proof}
This is a special case of \cite[\S 1.4]{Kashiwara-Schapira-persistent}.  
    Let us explain the origin of the persistence and torsion conditions.  

The embedding of Lemma \ref{vecr as sheaves} preserves filtered colimits, so any $F \in \Fun(\vec{\RR}^{op}, \mathbf{k})$ 
in the image of $\Gamma_{[\cdot, \infty)}$ preserves limits. The only nontrivial resulting condition is
\[\varprojlim_{\epsilon \to 0} F(x-\epsilon) = F(x).\]

This is precisely the condition of being a persistence module. 

Additionally, $\varinjlim_{c \to \infty} 1_{\RR_{\ge c}} = 0$, so such functors must satisfy \[\varprojlim_{N \to \infty} F(N) = 0.\]

This is the condition of being torsion. 
\end{proof}

\begin{remark}
The image
is the category of limit-preserving functors  $(\vec \RR \cup \infty)^{op} \to \mathbf{k}$. 
\end{remark}

\section{Some properties of Tamarkin categories} \label{tamarkin category properties}

Recall that for $U \subseteq T^*M$, we define 
\[\sT(U) := \Sh(M \times \RR) / \{F\, |\, 
\pi(i_+^{-1} ss(F)) \cap U  = \varnothing\},\]
where the maps are $ T^*(M \times \RR) \xleftarrow{i_+} J^1 M \xrightarrow{\pi} T^*M $, $(q,p,t,1) \mapsfrom (q,p,t) \mapsto (q,p)$.
Note that for 
$V \subseteq U$, there is an evident natural surjection 
$\sT(U) \to \sT(V)$.  
The notion of microsupport descends to define
a map $\underline{ss}$ from objects of $\sT(U)$ to closed coisotropic
subsets of $U$. The corresponding triangulated version $\mathcal{D}_U(M)$ (over a discrete ring $\FF$) was defined in \cite{Chiu-nonsqueezing,Zhang-capacities-Chiu-Tamarkin}. By results of \cite[Section 5]{Blumberg-Gepner-Tabuada-universal-hihger-K}, we have an equivalence of $\FF$-linear triangulated categories $h\sT(U)\simeq \mathcal{D}_U(M)$.

In this section we give various further properties of $\sT(U)$.  Many are taken from or are reformulations of results in \cite{Tamarkin-non-displaceability, Guillermou-Schapira, Guillermou-Kashiwara-Schapira, Guillermou-book}, or are otherwise known or obvious to experts. 

\subsection{\texorpdfstring{$\sT$-linearity of $\sT(U)$}{}}

For a subset $X \subseteq T^*(M \times \RR)$, let us 
write $\RR \cdot X$ for the union of all translates of $X$ along the base $\RR$ direction. 

For $U \subseteq T^*M$, let us write $\widetilde{U} \subseteq T^*(M \times \RR)$
for the conic saturation of $i_+(\pi^{-1}(U))$.  
Microsupports being conic, we have, for any $F \in \Sh(M \times \RR)$, 
\[\pi(i_+^{-1} ss(F)) \cap U = \varnothing \iff 
ss(F) \cap \widetilde{U} = \varnothing.\]

\begin{lemma} \label{tilde U translation}
$\widetilde{U} = \RR \cdot \widetilde{U}$.
\end{lemma}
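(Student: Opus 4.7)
The plan is to unwind the definitions: $\widetilde{U}$ is built from two operations applied to $U$, namely first $i \circ \pi^{-1}$ and then conic saturation, and both operations visibly preserve invariance under base $\RR$-translation.

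In the standard coordinates $(x,t;\xi,\tau)$ on $T^*(M\times\RR)$, we identify $J^1 M$ with $T^*M\times\RR_t$ (with coordinates $(x,\xi,t)$) via $i(x,\xi,t)=(x,t;\xi,1)$, while $\pi(x,\xi,t)=(x,\xi)$. Then
\[ i(\pi^{-1}(U)) = \{(x,t;\xi,1) \mid (x,\xi)\in U,\ t\in\RR\}, \]
which is manifestly invariant under translations $(x,t;\xi,\tau)\mapsto(x,t+c;\xi,\tau)$ because the condition $(x,\xi)\in U$ does not constrain $t$. Applying the conic saturation (which scales $(\xi,\tau)\mapsto(\lambda\xi,\lambda\tau)$ for $\lambda>0$ and does not touch $t$) yields
\[ \widetilde{U} = \{(x,t;\lambda\xi,\lambda) \mid (x,\xi)\in U,\ t\in\RR,\ \lambda>0\}, \]
which remains invariant under $t$-translations for the same reason.

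Since the inclusion $\widetilde{U}\subseteq \RR\cdot\widetilde{U}$ is automatic (take $c=0$), only the reverse inclusion requires content, and this is precisely the $t$-invariance just verified. So I would simply state the above, noting that the key point is that the $\RR_t$ factor in $i(\pi^{-1}(U))=U\times\RR_t$ is a full factor, hence stable under translation, and that conic saturation commutes with base translation.

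The main (and only) obstacle is cosmetic: setting up coordinates and conventions for $i$ and $\pi$ consistent with the rest of the paper. Once that is fixed the proof is a one-line verification, so I would keep the exposition short.
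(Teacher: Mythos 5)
Your proof is correct and takes the same approach as the paper: it writes out $\widetilde{U}$ explicitly in coordinates and observes that the condition defining it does not constrain the base $\RR$-coordinate $t$, so translation invariance is manifest. The paper's version is one line (it gives the coordinate formula $\widetilde{U}=\{(q,p,t,\tau):(q,p/\tau)\in U,\tau>0\}$ and leaves the rest implicit), which matches your parametrization after setting $p=\lambda\xi$, $\tau=\lambda$.
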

\begin{proof}
    If we write $(q,t)$ for coordinates on $M \times \RR$ and $(q, p, t, \tau)$ for coordinates on $T^*(M \times \RR)$, then $\widetilde{U}=\{(q,p,t,\tau):(q,p/\tau)\in U,\tau>0\}$. 
\end{proof}

\begin{lemma} \label{convolution support estimate}   \cite[Proposition 3.13]{Guillermou-Schapira}
For any $A \in \Sh(\RR)$ and $F \in \Sh(M \times \RR)$
\[ss(A \star F) \subseteq \RR \cdot((T^*M\times ss(A) )\cap ss( F)) \subseteq \RR \cdot ss(F).\]
\end{lemma}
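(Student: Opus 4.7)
The plan is to express $A \star F$ as a direct image of a sheaf on an auxiliary product and then chain together the standard six-functor microsupport estimates of Kashiwara--Schapira. Let $p_A : M \times \RR \times \RR \to \RR$ and $p_F : M \times \RR \times \RR \to M \times \RR$ be the evident projections, with the first $\RR$-factor being the Tamarkin direction for $A$ and the second the Tamarkin direction for $F$. Let $m : M \times \RR \times \RR \to M \times \RR$, $(q, s, t) \mapsto (q, s+t)$, be the sum in the $\RR$ direction. Then by unwinding the definition (which extends $\star$ from $\Sh(\RR) \times \Sh(\RR) \to \Sh(\RR)$ to the $M$-parametrized setting via the identity on $M$), we have $A \star F = m_! \bigl( p_A^* A \otimes p_F^* F \bigr)$.

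First I would bound the microsupport of the tensor product. Since $p_A^* A$ and $p_F^* F$ depend on disjoint sets of coordinates, this is in effect an external tensor product, and Künneth for microsupport gives
\[
ss\bigl(p_A^* A \otimes p_F^* F\bigr) \subseteq \bigl\{(q, s, t;\, p_q, \alpha, \tau) : (s, \alpha) \in ss(A),\ (q, t;\, p_q, \tau) \in ss(F)\bigr\}.
\]
Next I would apply the Kashiwara--Schapira estimate for the pushforward $m_!$. The transpose of $dm$ sends $(p_q, \mu)$ to $(p_q, \mu, \mu)$, so the bound forces $\alpha$ and $\tau$ to equal a common covector $\mu$. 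This common-covector constraint is the mechanism responsible for the \emph{intersection} $(T^*M \times ss(A)) \cap ss(F)$ in the statement: the $\RR$-covector must simultaneously come from $ss(A)$ and from $ss(F)$. The leading $\RR \cdot$ records the fact that the pushforward collapses the $s$-fiber, leaving the base $\RR$-coordinate $u = s + t$ unconstrained up to the covector data. The second inclusion $\RR \cdot ((T^*M \times ss(A)) \cap ss(F)) \subseteq \RR \cdot ss(F)$ is immediate from $\RR \cdot$ being monotone under inclusion.

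The main obstacle is that $m$ is not proper on the support of the tensor, so the naïve bound for $f_!$ is not directly applicable. I would handle this by changing variables via the diffeomorphism $\Phi(q, s, t) = (q, s, s+t)$, which turns $m$ into the projection $r(q, s, u) = (q, u)$, a submersion for which the pushforward microsupport estimate is standard (see \cite[Prop.~5.4.5]{Kashiwara-Schapira-SoM}); one transports the bound on $ss(p_A^* A \otimes p_F^* F)$ through the cotangent diffeomorphism induced by $\Phi$ and then applies $r_!$. This mirrors the argument given in \cite[Prop.~3.13]{Guillermou-Schapira}.
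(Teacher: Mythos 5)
The paper itself does not prove this lemma; it is quoted directly from Guillermou--Schapira, so the only comparison available is with your argument on its own terms. Your overall strategy --- write $A\star F = m_!\bigl(p_A^*A\otimes p_F^*F\bigr)$, bound the tensor factor by K\"unneth \cite[Prop.~5.4.1]{Kashiwara-Schapira-SoM}, then push forward along $m$ --- is the right one, and you correctly identify non-properness of $m$ as the crux. But your resolution of that crux does not work. A diffeomorphic change of coordinates cannot convert a non-proper map into a proper one: the projection $r(q,s,u)=(q,u)$ still has fiber $\RR$ and is exactly as non-proper on the relevant support as $m$ was. Moreover \cite[Prop.~5.4.5]{Kashiwara-Schapira-SoM} is the estimate for \emph{inverse} images under submersions; the direct image estimate \cite[Prop.~5.4.4]{Kashiwara-Schapira-SoM} requires $f$ proper on the support, and there is no unconditional substitute for non-proper $f$: microsupport really can be created at infinity. (For the projection $f(x,y)=x$ and $Z=\{x>0,\ y\ge 1/x\}$ one has $f_*1_Z=1_{(0,\infty)}$, whose nonzero conormal directions at $x=0$ are not seen by $f_\pi f_d^{-1}(ss(1_Z))$.) To close the gap you must either add a hypothesis making $m$ proper on $\supp(p_A^*A\otimes p_F^*F)$ --- e.g.\ both supports bounded below in the $\RR$-direction, which covers every use of the lemma in this paper --- or invoke the Kashiwara--Schapira estimates for non-proper direct images (the $\widehat{+}$ operation, or a compactification of the $\RR$-fiber) and check that the boundary contributions land in the $\RR$-translation-invariant right-hand side. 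That is the actual content of the cited proposition of Guillermou--Schapira.

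A smaller point: even granting the direct image estimate, what your computation produces is the set $\{(q,p,s+t,\tau): (s,\tau)\in ss(A),\ (q,p,t,\tau)\in ss(F)\}$, in which the covector $\tau$ is shared but the base points $s$ and $t$ are independent; its $\RR$-saturation is $\RR\cdot(T^*M\times ss(A))\cap\RR\cdot ss(F)$. The first displayed inclusion, parenthesized literally, intersects \emph{before} saturating and hence demands a common base point in $\RR$; that stronger statement is false as written (take $A=1_{\{c\}}$ with $c\ne 0$ and $F$ supported in $M\times\{0\}$: then $A\star F=\nT_{c*}F\ne 0$ while $(T^*M\times ss(A))\cap ss(F)=\varnothing$). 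Since the paper only ever uses the consequence $ss(A\star F)\subseteq\RR\cdot ss(F)$, you should state and prove the saturated version rather than reverse-engineer the printed one.
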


\begin{proposition} \label{t linear}
The  action of $(\Sh(\RR), \star)$ on $\Sh(M \times \RR)$
descends to an action of $\sT$ on $\sT(U)$. 
\end{proposition}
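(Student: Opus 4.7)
The plan is to verify that the convolution action $\star: \Sh(\RR) \otimes \Sh(M\times \RR) \to \Sh(M\times \RR)$ (acting on the $\RR$ factor) descends to the quotients. Writing $\cI_U \subseteq \Sh(M\times\RR)$ for the full subcategory of sheaves $F$ with $ss(F)\cap \widetilde U=\varnothing$, the problem reduces to checking two inclusions at the level of objects:
\begin{enumerate}[label=(\alph*)]
\item $\Sh_-(\RR)\star \Sh(M\times\RR)\subseteq \cI_U$,
\item $\Sh(\RR)\star \cI_U \subseteq \cI_U$.
\end{enumerate}
Given these, since $\sT$ and $\sT(U)$ are the Verdier/Bousfield quotients by $\Sh_-(\RR)$ and $\cI_U$ respectively and the action on $\Sh(M\times\RR)$ is already a (monoidal) action of $\Sh(\RR)$, the universal property of localization in $\PrLst$ produces the desired $\sT$-action on $\sT(U)$.

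Both inclusions follow from the microsupport estimate in Lemma \ref{convolution support estimate} combined with the $\RR$-invariance of $\widetilde U$ from Lemma \ref{tilde U translation}. For (a), if $A\in \Sh_-(\RR)$ then $ss(A)\subseteq \{\tau\le 0\}\subseteq T^*\RR$, and Lemma \ref{convolution support estimate} gives
\[ss(A\star F)\subseteq \RR\cdot \bigl((T^*M\times ss(A))\cap ss(F)\bigr)\subseteq \{\tau\le 0\},\]
since the $\RR$-translation in the base fixes the covector $\tau$. But $\widetilde U \subseteq \{\tau>0\}$ by the coordinate description in the proof of Lemma \ref{tilde U translation}, so $ss(A\star F)\cap \widetilde U=\varnothing$ and $A\star F\in \cI_U$.

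For (b), suppose $F\in \cI_U$, so $ss(F)\cap \widetilde U=\varnothing$. Lemma \ref{convolution support estimate} gives $ss(A\star F)\subseteq \RR\cdot ss(F)$, and Lemma \ref{tilde U translation} gives $\RR\cdot\widetilde U=\widetilde U$, so
\[(\RR\cdot ss(F))\cap \widetilde U = \RR\cdot \bigl(ss(F)\cap \widetilde U\bigr)=\varnothing,\]
whence $A\star F\in \cI_U$.

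I do not expect a genuine obstacle here; the content is the two microsupport estimates, which are exactly what Lemmas \ref{convolution support estimate} and \ref{tilde U translation} were set up to provide. The only slightly delicate point is presentational: one must articulate why the object-level inclusions (a),(b) suffice to produce a $\sT$-linear structure on $\sT(U)$ in $\PrLst(\mathbf{k})$, which is a standard consequence of the description of symmetric monoidal localizations at an ideal closed under the ambient action.
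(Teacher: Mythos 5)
Your proof is correct and follows essentially the same route as the paper: both reduce to the two object-level inclusions and deduce them from Lemmas \ref{tilde U translation} and \ref{convolution support estimate}. The only (harmless) difference is in showing that $\Sh_-(\RR)$ acts by zero: the paper uses the idempotent characterization $A \cong 1_{\RR_{>0}}[1]\star A$ together with the fact that $1_{\RR_{>0}}[1]\star(-)$ lands in $\Sh_-(M\times\RR)$, whereas you apply the convolution microsupport estimate directly to get $ss(A\star F)\subseteq\{\tau\le 0\}$, which is disjoint from $\widetilde U\subseteq\{\tau>0\}$.
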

\begin{proof}
First let us check that the action of $(\Sh(\RR), \star)$
descends to an action on $\sT(U)$.  
We must show that 
for any $A \in \Sh(\RR)$, the functor $A \star (-): \Sh(M \times \RR) \rightarrow \Sh(M \times \RR)$ fixes the subcategory $\{F\, | \, ss(F) \cap \widetilde{U} = \varnothing\}$.  This is clear from Lemmas \ref{tilde U translation} and \ref{convolution support estimate}. 

Now let us check that the action on $\sT(U)$
factors through $\sT$.  We should show that 
if $A \in \Sh_{-}(\RR)$, then the action of $A$ on $\sT(U)$ is trivial.   Now recall that $A \in \Sh_{-}(\RR)$ if and only if $A = 1_{\{0\}}  \star A \xrightarrow{\sim} 1_{\RR_{> 0}}[1] \star A$ is an isomorphism.
For such $A$, we have \[A \star F = (1_{\RR_{> 0}}[1] \star A) \star F = 1_{\RR_{> 0}}[1] \star (A \star F),\] and we recall that $1_{\RR_{>0}}[1]$ sends $\Sh(M \times \RR)$ into $\Sh_{-}(M \times \RR)$.
\end{proof}

The same considerations show: 

\begin{proposition} \label{T linear retract}
    For $V \subseteq U$, the quotient map $\sT(U) \to \sT(V)$ and both its adjoints  are $\sT$-linear. 
\end{proposition}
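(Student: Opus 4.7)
The plan is to descend all three functors from $\Sh(\RR)$-linear operations on $\Sh(M \times \RR)$. Writing $\cS_W \coloneqq \{F \in \Sh(M\times\RR) \mid \ms(F) \cap \widetilde W = \varnothing\}$, the hypothesis $V \subseteq U$ yields $\widetilde V \subseteq \widetilde U$, hence $\cS_U \subseteq \cS_V$. Therefore $q \colon \sT(U) \to \sT(V)$ is the universal functor induced from $\Id_{\Sh(M\times\RR)}$ through the Verdier quotient, and strictly commutes with $\star$-convolution. Since the $\sT$-action on each Tamarkin category arises from this convolution (Prop.~\ref{t linear}), $q$ is $\sT$-linear.

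The adjoints $L \dashv q \dashv R$ exist by Bousfield localization formalism for presentable stable categories, and they realize $\sT(V)$ as the left and right orthogonals of $\ker(q) \subseteq \sT(U)$, where $\ker(q)$ is the essential image of $\cS_V$ in $\sT(U)$. The key observation is that $\ker(q)$ is stable under both the $\star$-action of $\sT$ (by Lem.~\ref{convolution support estimate} together with $\widetilde V = \RR \cdot \widetilde V$ from Lem.~\ref{tilde U translation}) and the internal Hom $\sHom^\star(A, -)$ (by the analogue of Lem.~\ref{lem: convolution estimates} over $M \times \RR$).

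For $L$, $\sT$-linearity follows directly from the adjunction: for $x \in {}^\perp\ker(q)$, $y \in \ker(q)$, and $A \in \sT$,
\[\Hom_{\sT(U)}(A \star x, y) = \Hom_{\sT(U)}(x, \sHom^\star(A, y)) = 0,\]
using $\sHom^\star(A, y) \in \ker(q)$. Thus $A \star x \in {}^\perp\ker(q) = \im(L)$; applying $q$ and invoking its $\sT$-linearity recovers the natural isomorphism $A \star L(a) \cong L(A \star a)$.

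The analogous argument for $R$ is the main obstacle, since the adjunction $(A \star -) \dashv \sHom^\star(A, -)$ moves $A$ in the wrong direction to test whether $A \star x$ lies in $\ker(q)^\perp$ when $x$ does. My plan is to identify $\im(R) = \ker(q)^\perp$ explicitly as the subcategory of objects of $\sT(U)$ admitting a representative $F$ with $\ms(F) \subseteq \widetilde V$; this subcategory is manifestly $\star$-stable by Lems.~\ref{tilde U translation} and \ref{convolution support estimate} (now applied with $\widetilde V$ in place of $\widetilde U$). Equivalently, one can reduce to the generators $[1_{\RR \geq c}] \in \sT$, which act on any $\sT$-module by the translation automorphisms $\nT_{c\ast}$ and are therefore invertible --- so the adjunction flip becomes automatic --- and extend to all $A$ by writing $A$ as a colimit and using that $\star$ preserves colimits in each variable.
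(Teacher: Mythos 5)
Your treatment of the quotient map and the left adjoint is correct and is exactly what the paper's terse ``the same considerations show'' unpacks to: the quotient commutes strictly with $\star$ since it is induced by $\Id_{\Sh(M\times\RR)}$, and the left adjoint preserves $\star$ by the orthogonality calculation using that $\sHom^\star(A,-)$ preserves the relevant kernel (which in turn uses the estimate of Lemma~\ref{lem: convolution estimates} and the $\RR$-invariance of $\widetilde V$).

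The argument for the right adjoint, however, has genuine gaps in both of the proposed routes. First, the claimed identification of $\im(R)=\ker(q)^\perp$ with objects admitting a representative $F$ satisfying $\ms(F)\subseteq \widetilde V$ is not correct. Tracing through the adjoints, $R$ factors as $\sT(V)\hookrightarrow \Sh(M\times\RR)\twoheadrightarrow \sT(U)$ with the first functor landing in $\cS_V^\perp$, the \emph{right orthogonal} of the sheaves microsupported away from $\widetilde V$; this is not a microsupport inclusion of the form $\ms(F)\subseteq\widetilde V$. (Indeed $\widetilde V\subseteq\{\tau>0\}$, so $\ms(F)\subseteq\widetilde V$ would force $F$ to be locally constant, which is far too restrictive.) Second, the ``reduce to invertible generators $1_{\RR_{\geq c}}$ and extend by colimits'' step has a hidden hypothesis: the lax linearity map $A\star R(b)\to R(A\star b)$ is an isomorphism for invertible $A$, and the source is colimit-preserving in $A$, but to conclude for a general $A=\colim A_\alpha$ one needs the \emph{target} to commute with that colimit, i.e.\ one needs $R$ itself to preserve colimits. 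Unlike the left adjoint $L$ (which is automatically colimit-preserving, so the colimit-extension argument works there), the right adjoint of a Verdier quotient need not preserve colimits, and nothing in the proposal supplies this. The orthogonality trick also does not apply in this direction, because $A\star(-)$ only has a right adjoint, not a left one, so one cannot move $A$ off of $\cS_V^\perp$ by adjunction. You should either restrict the claim to the left adjoint (which is all the paper uses downstream, in Propositions~\ref{t dualizable} and \ref{classification-of-kernels} via Lemma~\ref{duality-of-retraction}), or supply a separate argument that $R$ is colimit-preserving in this setting before invoking the invertible-generator reduction.
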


\subsection{Tamarkin category as a sheaf category} \label{Tamarkin-coefficient-shaves}

As $\sT$ is a symmetric monoidal presentable stable category, we 
may consider the (symmetric monoidal presentable stable) categories  $\Sh(M; \sT)$ of sheaves with coefficients in $\sT$. 
As always we have a 6-functor formalism \cite{volpe_6_functor,Scholze-Six-Functor}.  We denote the resulting functors by 
 $\otimes_\sT, \sHom_{\sT}, {f}_*^{\sT},{f}^*_{\sT},{f}_!^{\sT}, {f}^!_{\sT}$.

\begin{proposition} \label{tamarkinsheaf}
As $\sT$-linear categories, we have $\sT(T^*M)\cong \Sh(M; \sT)$.   
\end{proposition}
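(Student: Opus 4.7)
The plan is to apply $\Sh(M) \otimes (-)$, computed in $\PrLst(\mathbf{k})$, to the localization sequence
\[ \Sh_-(\RR) \hookrightarrow \Sh(\RR) \twoheadrightarrow \sT \]
of Section~\ref{Tamarkin point}. Since the Lurie tensor product preserves all colimits, this produces a new localization sequence
\[ \Sh(M) \otimes \Sh_-(\RR) \hookrightarrow \Sh(M) \otimes \Sh(\RR) \twoheadrightarrow \Sh(M) \otimes \sT . \]
The K\"unneth theorem for sheaves on locally compact Hausdorff spaces~\cite{volpe_6_functor} --- already invoked in the proof of Proposition~\ref{dualizability of sheaves} --- identifies the middle term with $\Sh(M \times \RR)$, while the rightmost term is by definition $\Sh(M; \sT)$.

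It therefore suffices to identify $\Sh(M) \otimes \Sh_-(\RR)$ with the subcategory of $\Sh(M \times \RR)$ quotiented out in the definition of $\sT(T^*M)$. By Lemma~\ref{tilde U translation} applied with $U = T^*M$, this subcategory is precisely $\Sh_{\{\tau \le 0\}}(M \times \RR) := \{F : ss(F) \cap \{\tau > 0\} = \varnothing\}$. One inclusion is a routine microsupport estimate: the image of $\Sh(M) \otimes \Sh_-(\RR)$ in $\Sh(M \times \RR)$ is colimit-generated by external products $E \boxtimes G$ with $ss(G) \subseteq \{\tau \le 0\}$, for which $ss(E \boxtimes G) \subseteq ss(E) \times ss(G) \subseteq \{\tau \le 0\}$; the condition is preserved by colimits.

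For the reverse inclusion I would use projectors. The inclusion $\Sh_-(\RR) \hookrightarrow \Sh(\RR)$ admits a left adjoint $P_-$ via the recollement structure of Section~\ref{Tamarkin point}, and the characterization ``$A \to 1_{\RR_{>0}}[1] \star A$ is an iso iff $A \in \Sh_-$'' recalled in the proof of Proposition~\ref{t linear} shows that $P_- \simeq 1_{\RR_{>0}}[1] \star (-)$, i.e.\ $P_-$ is an explicit integral transform on $\RR \times \RR$. Tensoring with $\id_{\Sh(M)}$ yields a projector on $\Sh(M \times \RR)$ whose essential image is $\Sh(M) \otimes \Sh_-(\RR)$ and which is computed by the same convolution in the $\RR$-factor. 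Using Lemma~\ref{convolution support estimate}, one then checks that this projector acts as the identity on every $F \in \Sh_{\{\tau \le 0\}}(M \times \RR)$, yielding the containment $\Sh_{\{\tau \le 0\}}(M \times \RR) \subseteq \Sh(M) \otimes \Sh_-(\RR)$.

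Finally, $\sT$-linearity of the resulting equivalence is automatic: both sides inherit their module structure functorially from the $(\Sh(\RR), \star)$-action on $\Sh(M \times \RR)$, and the localization sequence above consists of $\Sh(\RR)$-module maps. The main obstacle I anticipate is the reverse inclusion just described: passing from abstract membership in the tensor-product subcategory $\Sh(M) \otimes \Sh_-(\RR)$ to a concrete microsupport condition is not formal, and relies on the explicit integral-kernel realization of $P_-$ together with compatibility between the formal tensor product in $\PrLst(\mathbf{k})$ and sheaf-theoretic convolution.
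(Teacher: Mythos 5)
Your proposal is correct and follows essentially the same route as the paper: apply $\Sh(M)\otimes(-)$ to the localization sequence $\Sh_-(\RR)\hookrightarrow\Sh(\RR)\twoheadrightarrow\sT$, use the K\"unneth identification $\Sh(M)\otimes\Sh(\RR)\simeq\Sh(M\times\RR)$, identify the kernel with $\Sh_-(M\times\RR)$, and then check $\sT$-linearity. The one place where the phrasing differs is the reverse containment $\Sh_-(M\times\RR)\subseteq\Sh(M)\otimes\Sh_-(\RR)$: you package the argument as ``the projector $\id_{\Sh(M)}\otimes P_-$ is computed by $1_{\RR_{>0}}[1]\star(-)$ and fixes $\Sh_-(M\times\RR)$,'' while the paper writes an arbitrary $F\in\Sh_-(M\times\RR)$ as $\colim F_\alpha\boxtimes G_\alpha$ and replaces $G_\alpha$ by $G_\alpha\star1_{\RR_{>0}}[1]$ after noting that convolution with $1_{\RR_{>0}}[1]$ fixes $F$ --- the same mechanism stated slightly differently. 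One minor imprecision: the fact that $1_{\RR_{>0}}[1]\star(-)$ acts as the identity on $\Sh_-(M\times\RR)$ is a recollement/orthogonality statement (the complementary projector $1_{\RR_{\geq 0}}\star(-)$ kills $\Sh_-(M\times\RR)$ because it factors through the quotient $\sT(T^*M)$), not a direct consequence of the microsupport estimate in Lemma \ref{convolution support estimate}; the latter only gives you one inclusion of microsupports, not invariance.
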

\begin{proof} 
We first prove that two categories are equivalent as $\mathbf{k}$-linear categories.

We use the identification
$\Sh(M; \sT) = \Sh(M) \otimes \sT$  (see \cite[Corollary 1.3.1.8]{Lurie-SAG}, \cite[Theorem 7.3.3.9]{Lurie-HTT}, or \cite[Corollary 2.24]{volpe_6_functor}). 

The functor $- \mapsto \Sh(M)\otimes -$ preserves colimits, so
\begin{eqnarray*}
    \Sh(M) \otimes \sT & = &\Sh(M)\otimes \cof(\Sh_{-}(\RR) \hookrightarrow  \Sh( \RR)) \\ & \cong & \cof(\Sh(M)\otimes\Sh_{-}( \RR) \hookrightarrow \Sh(M)\otimes\Sh( \RR)) 
    \\ & = & \cof(\Sh(M)\otimes\Sh_{-}( \RR) \hookrightarrow \Sh(M \times \RR)) .
\end{eqnarray*}

It remains to verify that the essential image of the functor\[\Sh(M)\otimes\Sh_{-}( \RR)\rightarrow \Sh(M)\otimes\Sh( \RR) = \Sh(M\times \RR), \quad F\otimes G \mapsto F\boxtimes G\] is $\Sh_{-}(M\times \RR)$.  

Any object in $\Sh_{-}(M\times \RR)$ can be written
as a colimit of some $F_\alpha \boxtimes G_\alpha \in \Sh(M) \boxtimes \Sh(\RR)$.  We should show that the $G_\alpha$ can be chosen to be in $\Sh_-(\RR)$.  But since $\Sh_-( M\times \RR)$ is stable under  convolution with $1_{\RR_{>0}}[1]$, we have: 

\[\colim F_\alpha \boxtimes G_\alpha = 
(\colim F_\alpha \boxtimes G_\alpha) \star 1_{\RR_{>0}}[1] =
\colim F_\alpha \boxtimes ( G_\alpha \star 1_{\RR_{>0}}[1]).
\]

Now, we show that the equivalence is actually $\sT$-linear. Any arbitrary object in $\Sh(M)\otimes\sT$ can be written as a colimit of some $F_\alpha \boxtimes G_\alpha \in \Sh(M) \otimes \sT$. So $A\in \sT$, we have 
\[ 
A \star (\colim F_\alpha \boxtimes G_\alpha) =
\colim F_\alpha \boxtimes (   A\star G_\alpha).\qedhere
\]

\end{proof}

\begin{remark} \label{tamarkin-six-functors}
    For a continuous map $f:M\rightarrow N$, we set $\underline{f}=f\times \id_{\RR}$, and one can check $\underline{f}_*,\underline{f}^*,\underline{f}_!,\underline{f}^!$ descent to $\sT(T^*M)$.  One can check that, under the above isomorphim, ${f}_*^{\sT},{f}^*_{\sT},{f}_!^{\sT}, {f}^!_{\sT}$ correspond to these $\underline{f}_*,\underline{f}^*,\underline{f}_!,\underline{f}^!$.

    In \cite{Guillermou-Schapira}, a 
    closed symmetric monoidal structure $(\star, \sHom^\star)$ is defined on $\sT(T^*M)$  by tensor in the $M$ factor and convolution in the $\RR$ factor.  One can check that under
    the above isomorphism, it is carried to $(\otimes_\sT, \sHom_{\sT(T^*M)})$. Indeed,  the argument of Proposition \ref{tamarkinsheaf} also gives us that $\otimes_\sT \simeq \star $ as bifunctors. Then $\sHom_{\sT(T^*M)}\simeq \sHom^\star$ since they are right adjoints of $\otimes_\sT \simeq \star $. We will only use $\sHom_{\sT(T^*M)}$ in this article. We will also abuse $\sHom_{\sT(T^*M)}$ by $\sHom_{\sT}$ when $M$ is clear; but be careful to distinguish it with $\sHom_{\sT}$ for a $M=\pt$, where we also use $\sHom_{\sT}$.
\end{remark}
\begin{remark}\label{remark: different hom}In $\Sh(M;\sT)$, we emphasize that $\sHom_{\sT}$ means the internal hom sheaf of $\Sh(M;\sT)$ with respect to the $\star$-monoidal structure on $\Sh(M;\sT)$. In particular, for $F,G\in \Sh(M;\sT)$, we have $\sHom_{\sT}(F,G)\in \Sh(M;\sT)$. On the other hand, as a $\sT$-linear category, $\Sh(M;\sT)$ suppose to have a $\sT$-valued relative hom $\Hom_{\Sh(M;\sT)/\sT}(F,G)$ in terms of notation of Section \ref{subsection: Extending coefficient}, which is computed by $a_{M*}^\sT\sHom_{\sT}(F,G) \in \sT$ by 6-functor yoga. 

Recall Remark \ref{remark: k-linear hom}, we denote $\Hom=\Hom_{\sT}$ the $\mathbf{k}$-linear hom of the $\mathbf{k}$-linear Tamarkin categories. 

In summary, for $F,G\in \sT(T^*M)$ and $c\in \RR$, by $\sT$-linear 6-functor yoga, the precious relation of various hom objects are given
\begin{equation}\label{equiation: hom computation on T*M}
    \Hom(F,\nT_{c*}G)=\Gamma_{M\times [-c,\infty)}(M\times \RR,\sHom_{\sT}(F,G))=\Gamma_{[-c,\infty)}(\RR,a_{M*}^\sT\sHom_{\sT}(F,G)).
\end{equation}
\end{remark}

\begin{remark}
    Another way to read Proposition \ref{tamarkinsheaf} is 
    that for $F \in \Sh(M; \sT)$, there is a natural {\em nonconic} microsupport $\underline{ss}(F) \subseteq T^*M$. In this language, $\sT(U)$ is the quotient of $\Sh(M;\sT)$ by 
    \[ \Sh_{U^c}(M;\sT) \coloneqq \{ F \in \Sh(M;\sT) | \underline{ss}(F) \subseteq U^c\}.\]
\end{remark}

\begin{corollary} \label{tamarkin-category-product}
Consider the map $s: M \times \RR \times N \times \RR \to M \times N \times \RR$ given by $s(x, t_1, y, t_2) = (x, y, t_1 + t_2)$.  Then
the equivalence $\sT(T^* M) \otimes_\sT \sT(T^* N) \simeq \sT(T^* M \times T^* N)$ is given by 
\[\sT(T^* M) \otimes_\sT \sT(T^* N)  \to \sT(T^* M \times T^* N) ,\quad
(F,G) \mapsto s_! (F \boxtimes G).\]
\end{corollary}

\begin{proof}
By Proposition \ref{tamarkinsheaf}, it is clear that 
\begin{equation*}\label{equation: Kunneth T-linear sheaves}
  \begin{tikzcd}
\sT(T^* M) \otimes_\sT \sT(T^* N) \arrow[d, "="] \arrow[rrr, "\simeq"] &                                                                     &                                                & \sT(T^* M \times T^* N) \arrow[d, "="] \\
\Sh(M;\sT) \otimes_\sT \Sh(N;\sT) \arrow[r, "\simeq"]                                 & \Sh(M)\otimes\Sh(N) \otimes\sT \otimes_\sT  \sT \arrow[r, "\simeq"] & \Sh(M\times N)\otimes \sT \arrow[r, "\simeq"] & \Sh(M\times N;\sT) .                        
\end{tikzcd}
\end{equation*}

Here, we want to show that this equivalence is the one given in the statement of the Corollary.

By definition of $\sT$-linear tensor product \cite[Construction 4.4.2.7]{Lurie-HA}, the equivalence in second row the commutative diagram is induced by the following bar construction
\[\Sh(M;\sT) \otimes_\sT \Sh(N;\sT)\coloneqq\varinjlim\nolimits_{n \geq 0} \Sh(M;\sT) \otimes \sT ^ { \otimes n } \otimes  \Sh(N;\sT)  \xrightarrow{\simeq} \Sh(M\times N;\sT).\]

It remains to identify the augmentation functor $\Sh(M;\sT)  \otimes  \Sh(N ;\sT) \rightarrow \Sh(M\times N;\sT)$.
We naturally identify $\Sh(M;\sT) \simeq \Sh(M) \otimes \sT$. The argument of Proposition \ref{tamarkinsheaf} identifies the augmentation $\Sh(M;\sT)  \otimes  \Sh(N ;\sT) \rightarrow \Sh(M\times N;\sT)$ with 
\[\Sh(M) \otimes \sT    \otimes  \Sh(N)\otimes\sT =\Sh(M\times N)\otimes\sT \otimes \sT\xrightarrow{\Sh(M\times N)\otimes \star} \Sh(M\times N)\otimes\sT\]
via the the monoidal structure on $\sT$, i.e. the $\star$-convolution $\star:\sT\otimes \sT\rightarrow \sT$. It is clear now that the augmentation functor $\Sh(M;\sT)   \otimes  \Sh(N ;\sT) \rightarrow \Sh(M\times N;\sT)$ is the functor $F\boxtimes G \rightarrow s_! (F \boxtimes G)$, which finish the proof.\end{proof}

\subsection{Dualizability}

\begin{proposition}\label{t dualizable}
$\sT(U)$ is dualizable as $\sT$-linear category.
\end{proposition}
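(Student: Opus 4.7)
The plan is to exhibit $\sT(U)$ as a retract of $\sT(T^*M)$ in $\PrLst(\sT)$, and then invoke the general fact that a retract of a dualizable object in any symmetric monoidal category is itself dualizable.

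First I would establish that $\sT(T^*M)$ is dualizable over $\sT$. By Proposition \ref{tamarkinsheaf}, as $\sT$-linear categories
\[\sT(T^*M) \simeq \Sh(M;\sT) \simeq \Sh(M) \otimes_{\mathbf{k}} \sT,\]
and by Proposition \ref{dualizability of sheaves} the category $\Sh(M)$ is self-dual as a $\mathbf{k}$-linear category, with explicit unit $\Delta_! a^*$ and counit $a_! \Delta^*$. Base changing these duality data along the symmetric monoidal functor $\mathbf{k} \to \sT$ produces an explicit self-duality of $\Sh(M) \otimes_{\mathbf{k}} \sT$ over $\sT$. (In general, if $A \to B$ is a symmetric monoidal functor between presentably symmetric monoidal categories and $X$ is dualizable over $A$, then $X \otimes_A B$ is dualizable over $B$; this applies here since $\Sh(M)$ is dualizable in $\PrLst(\mathbf{k})$ by Proposition \ref{dualizability of sheaves}.)

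Next I would realize $\sT(U)$ as a retract of $\sT(T^*M)$ in $\PrLst(\sT)$. By construction $\sT(U)$ is the Bousfield localization of $\sT(T^*M) = \Sh(M;\sT)$ at the kernel $\Sh_{U^c}(M;\sT)$, so the quotient functor $q: \sT(T^*M) \to \sT(U)$ admits a fully faithful left adjoint $j: \sT(U) \hookrightarrow \sT(T^*M)$ with $q \circ j \simeq \id_{\sT(U)}$. By Proposition \ref{T linear retract} (applied to $U \subseteq T^*M$), both $q$ and $j$ are $\sT$-linear and colimit-preserving, so this section exhibits $\sT(U)$ as a retract of $\sT(T^*M)$ in $\PrLst(\sT)$. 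Since retracts of dualizable objects are dualizable, we conclude.

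The only real obstacle is the first step: one must check that forming the tensor product $(-) \otimes_{\mathbf{k}} \sT$ in $\PrLst$ genuinely preserves the duality, i.e.\ that the base-changed $\eta$ and $\epsilon$ still satisfy the triangle identities in $\PrLst(\sT)$. This is formal from the universal property of the relative tensor product and the symmetric monoidal functoriality of $- \otimes_{\mathbf{k}} \sT: \PrLst(\mathbf{k}) \to \PrLst(\sT)$, but it is the one place where one must be careful about the higher-categorical coherences.
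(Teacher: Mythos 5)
Your proof follows the paper's argument essentially verbatim: dualizability of $\sT(T^*M)$ via Propositions \ref{tamarkinsheaf} and \ref{dualizability of sheaves} (base-changed to $\sT$-coefficients), then dualizability of the retract $\sT(U)$ via Proposition \ref{T linear retract}. The one imprecision is the assertion that retracts of dualizables are dualizable in \emph{any} symmetric monoidal category --- one needs the dual idempotent to split so that the candidate dual exists as an object, which the paper secures via Lemma \ref{duality-of-retraction} together with Lemma \ref{closed idempotent splits}, using that $\PrLst(\sT)$ is closed; this is harmless here but should be said.
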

\begin{proof}
The case of $U = T^*M$ is a special case of Proposition \ref{dualizability of sheaves} (as explained in Remark \ref{remark: compactly generated}), after the identification $\sT(T^*M) =\Sh(M;\sT)$ of Proposition \ref{tamarkinsheaf}. 

We may then deduce the result for general $U$, via Proposition \ref{T linear retract} and Lemmas \ref{duality-of-retraction} and \ref{closed idempotent splits}. 
\end{proof}

\subsection{Some compatibilities of \texorpdfstring{$\sT$}{}-linear kernels}\label{subsection: k-linear vs T-linear}

\begin{proposition}\label{prop: regular represtation}Denote $m(t_1,t_2)=t_2-t_1$ the subtraction map. We have the following:
\begin{enumerate}[fullwidth]
    \item For $K,F\in \Sh(\RR)$, we have $K \star F = (m^* K) \circ F$;
    \item The functor $m^*: (\Sh( \RR),\star) \rightarrow (\Sh(\RR   \times \RR),\circ)$ is monoidal: for any $G, F \in \Sh( \RR)$, we have
\[m^* (G \star F) = (m^* G) \circ (m^* F);\]

    \item The functor $m^*$ is identified with the regular representation of $\Sh(\RR)$:
\[\rho_{\Sh(\RR)}:\Sh(\RR)\rightarrow \Fun^L(\Sh(\RR),\Sh(\RR)).\]
\end{enumerate}
    
\end{proposition}

\begin{proof} 
    \begin{enumerate}[fullwidth]
        \item By the definition, $(m^* K) \circ F  = {q_{2}}_! ( m^* K \otimes q_{1}^*F)$ for projections $q_1,q_2:\RR^2\rightarrow \RR$. 
However, we have $(q_1, m )^{-1}=(q_1, s)$ and $q_{2} =s \circ (q_1, m ) $. Thus, we have 
\[
(m^* K) \circ  F
= s_! (q_1, m )_! ( m^* K \otimes q_{1}^*F) 
= s_! (q_1,s)^* (m^* K \otimes q_{1}^*F)= s_!(q_{2}^*K \otimes q_{1}^* K)  = K \star F.
\]

\item By the definition, $m^* (G \star F) = m^* s_! (p_2^* G \otimes p_1^* F )$. From the pullback diagram
\[
\begin{tikzpicture}
\node at (0,1.5) {$\RR^3$};
\node at (4,1.5) {$\RR^2$};
\node at (0,0) {$\RR^2$};
\node at (4,0) {$\RR$};
\draw [->, thick] (0.4,1.5) -- (3.7,1.5) node [midway, above] {$m \times id$};
\draw [->, thick] (0.4,0) -- (3.7,0) node [midway, above] {$m$};

\draw [->, thick] (0,1.2) -- (0,0.3) node [midway, right] {$\id \times s$}; 
\draw [->, thick] (4,1.2) -- (4,0.3) node [midway, right] {$s$};
\end{tikzpicture}
\] 
we have that $m^* (G \star F)  = (\id \times s)_! (m \times \id)^* (p_2^* G \otimes  p_1^* F) = (\id \times s)_! (q_3^* G \otimes q_{12}^* m^* F )$,
where we use the equality $p_1 \circ (m \times \id) = m \circ q_{12} = \left[ (t_1, t_2, t_3) \mapsto t_2-t_1  \right]$.
Next, we notice that $q_{13} = (\id \times s) \circ (q_1,q_2, m \circ q_{23}) = \left[ (t_1, t_2, t_3) \mapsto  (t_1, t_2, t_3-t_2 ) \mapsto (t_1,t_3) \right]$, and thus
\begin{align*}
m^* (G \star F) 
&= (\id \times s)_! (q_1,q_2, m \circ q_{23})_! (q_1,q_2, m \circ q_{23})^* (q_3^* G  \otimes q_{12}^* m^* F) \\ 
&= {q_{13} }_! (q_1,q_2, m \circ q_{23})^* (q_3^* G \otimes q_{12}^* m^* F) \\
&= {q_{13} }_!  ({q_{23}}^* m^* G \otimes q_{12}^* m^* F) =  (m^* G) \circ (m^* F),
\end{align*}
where we use the fact that $q_3 \circ (q_1,q_2, m \circ q_{23}) = m \circ q_{23}$ for the second to last equality.

\item This is just a combination of the first two terms and Proposition \ref{Kernel-functor corresponding}.\qedhere
    \end{enumerate}
\end{proof}

\begin{corollary}\label{corollary: m^r_T fully-faithful}The regular representation $\rho_{\Sh(\RR)}$ is fully-faithful.  
\end{corollary}  
\begin{proof}The regular representation $\rho_{\Sh(\RR)}$ is identified with $m^*$. However, the subtraction map $m$ is a fiber bundle with contractible fibers $\RR$, so $m^*$ is fully-faithful (see e.g. \cite[Proposition 2.7.8]{Kashiwara-Schapira-SoM}).    
\end{proof}
\begin{remark}\label{remark: non-fully faithful regular rep example}For a commutative topological group $(G,+)$, the same discussion about the convolution monoidal structure on $\Sh(\RR)$ shows that $\Sh(G)$ is dualizable symmetric monoidal, and the regular representation is given by $\rho_{\Sh(G)}=m^*$. However, it is not true that $\rho_{\Sh(G)}$ is fully-faithful in general. For example $G=S^1$, one can check by hand that $\rho_{\Sh(S^1)}$ is not fully-faithful.  
\end{remark}

We have the following compatibility.

\begin{lemma}\label{lemma: star-kernel vs circ-kernel}
    The $\star$-integral kernel representation
    \[\Sh(M \times N \times \RR)\to  \Fun^L_{\Sh (\RR)}(\Sh(M;\Sh (\RR)),\Sh(N;\Sh (\RR)))\] factors through the usual integral kernel
    representation \[\Sh(M \times \RR \times N \times \RR) \to \Fun^L(\Sh(M\times \RR),\Sh(N\times \RR))\] via pullback
    along the subtraction map $m: \RR \times \RR \to \RR$, and is fully-faithful.

    \begin{equation*}
    \begin{tikzcd}
\Sh(M\times N\times \RR) \arrow[rr, "\simeq"] \arrow[d, "m^*"] &  & {\Fun^L_{\Sh (\RR)}(\Sh(M;\Sh (\RR)),\Sh(N;\Sh (\RR)))} \arrow[d] \\
\Sh(M\times \RR\times N\times \RR)  \arrow[rr, "\simeq"]       &  & {\Fun^L(\Sh(M\times \RR),\Sh(N\times \RR))} ,                      
\end{tikzcd}
\end{equation*}
\[\begin{tikzcd}
\cK \arrow[d, maps to] \arrow[r, maps to] & {[F \mapsto \cK \circ_{\Sh(\RR)} F \coloneq  q_{2!}(\cK\otimes_{\Sh(\RR)} q_1^*F)]} \arrow[d, maps to] \\
K={m^*\cK} \arrow[r, maps to]               & {[F \mapsto {m^*\cK}  \circ F{\coloneqq}  p_{2!}(m^*\cK  \otimes p_1^*F)].}                                             
\end{tikzcd}\]

\end{lemma}
\begin{proof}This is a example of Lemma \ref{lemma: restriction fully-faithful-PrL}. Here, we take $X=\Sh(M)$ and $Y=\Sh(N)$, and $\cA=\Sh(\RR)$. In particular, we have known that the regular representation $\rho_{\Sh(\RR)}$ is fully-faithful, then the $\star$-integral kernel/usual integral kernel representation comparison functor is fully-faithful.  
\end{proof}

\begin{proposition}
The $\star$ action
\[\Sh(M\times N\times \RR) \otimes_{\Sh(\RR)} \Sh(M\times \RR) \rightarrow \Sh(N\times \RR),\quad K\otimes_{\Sh(\RR)} F\mapsto K\circ_{\Sh(\RR)} F\]
descends to the standard convolution action  defined by $\sT$-linear 6-functors. 
\[\Sh(M\times N;\sT) \otimes_\sT \Sh(M;\sT) \rightarrow \Sh(N;\sT), \quad K\otimes_\sT F\mapsto K\circ_\sT F,\]
\end{proposition}
\begin{proof}
    This follows from \cite[Proposition 3.13]{Guillermou-Schapira} as recalled above as Lemma \ref{convolution support estimate}.
\end{proof}

\begin{proposition}\label{proposition:fully-faithful of forgetful functor between functors}
The regular representation $\rho_\sT$ is fully-faithful. In particular, the $\sT$-valued $\star$-integral kernel/usual integral kernel representation comparison functor
\[\Fun^L_{\sT}(\Sh(M;\sT),\Sh(N;\sT))\rightarrow \Fun^L(\Sh(M;\sT),\Sh(N;\sT))\]
is fully-faithful.
\end{proposition}
\begin{proof}
We identify $\sT$ with a full-subcategory of $\Sh(\RR)$ using left-adjoint, then $\rho_\sT$ fits into the diagram
\[\begin{tikzcd}
\sT \arrow[r,"\rho_\sT"] \arrow[d, hook] & {\Fun^L(\sT,\sT)=\sT^\vee\otimes \sT} \arrow[d]      \\
\Sh(\RR) \arrow[r,"\rho_{\Sh(\RR)}"]      & {\Fun^L(\Sh(\RR),\Sh(\RR))=\Sh(\RR)\otimes \Sh(\RR)}.
\end{tikzcd}\]

The left vertical is fully-faithful by construction and $\rho_{\Sh(\RR)}$ is fully-faithful by Corollary \ref{corollary: m^r_T fully-faithful}. 

Then the right vertical is fully-faithful since it factor through fully-faithful functors
\[\sT^\vee\otimes \sT \rightarrow \Sh(\RR)\otimes \sT \rightarrow \Sh(\RR)\otimes \Sh(\RR).\]

Then for the second statement, we use Lemma \ref{lemma: restriction fully-faithful-PrL}.
\end{proof}

\begin{lemma}\label{lemma: product reduced microsupport}
For $F \in \Sh(M;\sT)$ and $G \in \Sh(N;\sT)$, we have  $\underline{ss}(F \boxtimes_{\sT} G) \subseteq  \underline{ss}(F) \times \underline{ss}(G)$. 
\end{lemma}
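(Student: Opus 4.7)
The plan is to use Corollary \ref{tamarkin-category-product} to realize $F \boxtimes_\sT G$ as an explicit pushforward of an external tensor product of sheaves on $M \times \RR$ and $N \times \RR$, and then run the standard Kashiwara--Schapira microsupport estimates through the projection $\pi \circ i^{-1}$ that defines $\underline{ss}$.

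First I pick lifts $\widetilde F \in \Sh(M \times \RR)$ of $F$ and $\widetilde G \in \Sh(N \times \RR)$ of $G$ (for instance by convolving with $1_{\RR_{\geq 0}}$ so as to land in the chosen section of the quotient). By Corollary \ref{tamarkin-category-product} the sheaf $F \boxtimes_\sT G$ is then represented by $s_!(\widetilde F \boxtimes \widetilde G)$, where $s(x, t_1, y, t_2) = (x, y, t_1 + t_2)$. Next I combine the external product estimate $ss(\widetilde F \boxtimes \widetilde G) \subseteq ss(\widetilde F) \times ss(\widetilde G)$ with the pushforward estimate $ss(s_! H) \subseteq s_\pi s_d^{-1}(ss(H))$. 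A direct unwinding in coordinates shows the cotangent correspondence along $s$ forces the dual variable $\tau$ to $t$ to equal both dual variables to $t_1$ and $t_2$, so that
\[ ss(s_!(\widetilde F \boxtimes \widetilde G)) \subseteq \{ (x, \xi, y, \eta, t, \tau) : \exists\, t_1 + t_2 = t,\ (x, \xi, t_1, \tau) \in ss(\widetilde F),\ (y, \eta, t_2, \tau) \in ss(\widetilde G)\}. \]

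Finally I apply $\pi \circ i^{-1}$, which on $\{\tau > 0\}$ sends $(x, \xi, y, \eta, t, \tau)$ to $(x, \xi/\tau, y, \eta/\tau) \in T^*M \times T^*N$. The crucial feature is that $\tau$ is shared by both factors in the estimate above, so the image factors as $(x, \xi/\tau) \in \underline{ss}(F)$ paired with $(y, \eta/\tau) \in \underline{ss}(G)$, giving the desired inclusion. The one technical point is ensuring properness of $s$ on $\supp(\widetilde F \boxtimes \widetilde G)$ so that the Kashiwara--Schapira $s_!$ estimate applies; this can be handled either by appealing to the convolution estimate of \cite[Proposition 3.13]{Guillermou-Schapira} (which is tailored to exactly this $\RR$-direction operation and does not require global properness) or by writing $\widetilde F$, $\widetilde G$ as filtered colimits of sheaves whose support is bounded below in the $\RR$ direction, on which $s$ is proper, and passing to the colimit.
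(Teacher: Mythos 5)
Your proposal is correct and follows essentially the same route as the paper: the paper's proof simply cites the external tensor product estimate \cite[Proposition 5.4.1]{Kashiwara-Schapira-SoM} together with the convolution estimate \cite[Proposition 4.13]{Guillermou-Schapira}, which are precisely the two ingredients you unwind (the $\boxtimes$ estimate and the pushforward along $s$, with the shared $\tau$ variable doing the work under the reduction $\rho$). Your explicit handling of the properness issue via the $\RR$-direction convolution estimate is the right way to justify the $s_!$ step and is consistent with the paper's citation.
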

\begin{proof}
Viewing $\sT(U)$ as a quotient of $\Sh(M \times \RR)$, the statement is a combination of \cite[Proposition 5.4.1]{Kashiwara-Schapira-SoM} and \cite[Proposition 4.13]{Guillermou-Schapira}.
\end{proof}

\begin{example} \label{identity-functor-as-Tamarkin-kernel}
The constant sheaf on the diagonal $1_{\Delta_M}^\sT \in \Sh(M \times M; \sT)$ is the integral kernel for the identity functor $\sT(T^* M) = \Sh(M;\sT)$.  Note however that $1^\sT$ is the symmetric monoidal unit of $\sT$, and under the inclusion $\sT \hookrightarrow \Sh(\RR; \mathbf{k})$, we have $1^\sT \mapsto 1_{[0,\infty)}$, where the RHS $1$ denotes the monoidal unit of $\mathbf{k}$.  Correspondingly,
under $\Sh(M \times M; \sT) \hookrightarrow \Sh(M \times M \times \RR, \mathbf{k})$, we have $1_{\Delta_M}^{\sT} \mapsto 1_{\Delta_M} \boxtimes 1_{[0,\infty)}$.  The identity kernel often appeared under the latter guise in the previous literature. 
\end{example}

\subsection{Sheaf quantization of Hamiltonian isotopies}
We recall the results of \cite{Guillermou-Kashiwara-Schapira}. 
Let $Y$ be a manifold.  On the cotangent bundle, we choose the
exact symplectic form $\omega=d\lambda$ with $\lambda=pdq$ (this determines some signs).

Let $\dot{T}^*Y$ be the complement of the zero section in $T^* Y$.  Let $(I, 0)$ be a pointed interval. Consider an $\RR_{>0}$-equivariant $C^\infty$ symplectic isotopy
\[\phi: I \times \dot{T}^*Y \rightarrow \dot{T}^*Y,\]
which is the identity at $0 \in  I$. 
Such an isotopy is always the Hamiltonian flow for a unique $\RR_{>0}$-equivariant function $H:  I \times \dot{T}^*Y  \to \RR$. We identify $T^*(Y\times Y) = \overline{T^*Y}\times T^*Y$.

At fixed $z \in I$, we have the graph of $\phi_{z}$: 
\begin{equation}\label{equation: graph}
  \Lambda_{{\phi}_{z}}\coloneqq \left\lbrace ((q,-p),{\phi}_{{z}}(q,p) ) : (q,p) \in \dot{T}^*Y\right \rbrace\subseteq \dot{T}^*(Y\times Y) .
\end{equation}

As for any of Hamiltonian isotopy, we may consider the Lagrangian graph, which by definition is a Lagrangian subset $\Lambda_\phi \subseteq  T^*I \times  \dot{T}^*(Y\times Y) $
with the property that 
$ \Lambda_{{\phi}_{z_0}}$ is 
the symplectic reduction 
of $\Lambda_{{\phi}}$ along  $\{z=z_0\}$.  It is given by the formula: 
\begin{equation}\label{equation: totalgraph}
  \Lambda_{{\phi}}\coloneqq \left\lbrace (z, - H(z,{\phi}_{z}(q,p))  ,(q,-p),{\phi}_{z}(q,p) ) : z\in I, (q,p) \in \dot{T}^*Y \right \rbrace .
\end{equation}

For $F\in \Sh(Y)$, we set $\dot{ss}(F)\coloneqq ss(F)\cap \dot{T}^*Y$.

\begin{theorem}[{\cite[Theorem 3.7, Prop. 4.8]{Guillermou-Kashiwara-Schapira}}]\label{theorem: GKS} For $\phi$ as above, there is a sheaf $K=K({\phi}) \in \Sh( I\times  Y\times Y)$ such that $\dot{ss}(K)= \Lambda_{{\phi}}$ and $K|_{ \{0\}\times Y^2}\cong 1_{\Delta_{Y}}$.  
The pair $(K, K|_{\{0\}\times Y^2}\cong 1_{\Delta_{Y}})$ is unique up to unique isomorphism.

Moreover, 
for isotopies $\phi, \phi'$  generated by Hamiltonians
$H' \leq H$, there's a map $K(\phi')|_{\{1\} \times Y^2}  \to K(\phi)|_{\{1\} \times Y^2} $.  
In particular, when $H \geq 0$, then there is a map $1_{I \times \Delta_Y}|_{\{1\} \times Y^2} \to K(\phi)|_{\{1\}\geq \times Y^2}$. 
\end{theorem}
From general properties of microsupports, one has 
\begin{equation}\phi_t(\dot{ss}(F)) = \dot{ss}(  K_t\circ F).\end{equation}

\begin{remark}\label{remark: GKS theorem}
    The basic idea of the proof of Theorem \ref{theorem: GKS} is that
    (1) for sufficiently small positive $H$, 
    the locus $\Lambda_\phi$ is the conormal to the boundary of a neighborhood of the diagonal, and the corresponding $K$ is just the constant sheaf on the closed  neighborhood and (2) any $\phi$ can be obtained by composing $\phi$ as in (1) and their inverses.

To see the existence of the morphism $K(\phi')|_{\{1\} \times Y^2}  \to K(\phi)|_{\{1\} \times Y^2} $, one also reduces to the case when $\phi'$ is the identity so $H \geq 0$. We set $T^*I= I\times \RR_{\zeta}$ and $T_{\zeta \leq 0}^*I =I\times  \RR_{\zeta \leq 0}$. By Equation \eqref{equation: totalgraph}, we have that $K(\phi)\in \Sh_{T^*_{\zeta \leq 0}I}(I  \times Y \times Y)$ since $H\geq 0$. Then the canonical map comes from the property of the latter category (c.f. \cite[Proposition 4.8]{Guillermou-Kashiwara-Schapira}).

We will give a detailed account of the $\sT$-linear version of this statement in the proof of Corollary \ref{GKS tamarkin version} below.  
\end{remark}

\vspace{2mm}
To apply to non-conic situations, consider some manifold $M$.  We write coordinates $q$ on $M$, and $(q, p)$ on $T^*M$.  We write coordinates $(q,t)$ on $M \times \RR$ and $(q, p, t, \tau)$ on $T^*(M \times \RR)$. We identify $T^*(M\times \RR)$ with $\overline{T^*M}\times T^*\RR$. We consider the map 
\begin{eqnarray*}
    \rho: T^*M \times \dot T^* \RR & \to & T^*M \\
    (q, p, t, \tau) & \mapsto & (q, p/\tau)  .  
\end{eqnarray*}

For a smooth function $H$ with compactly supported derivative, denote $X_H$ the Hamiltonian vector field defined by $\iota_{X_{H}}\omega =-dH$, and let $\varphi: I\times T^*M \rightarrow T^*M$ be the isotopy generated by $X_H$. One can lift $\varphi$ to $\widehat{\varphi}: I\times\dot{T}^*(M\times \RR)  \rightarrow \dot{T}^*(M\times \RR) $:
\begin{proposition}[{\cite[Proposition A.6]{Guillermou-Kashiwara-Schapira}}]
Let $H: I \times T^*M \to \RR$ be a  function with compactly supported derivative, and 
$\varphi: I\times T^*M \rightarrow T^*M$ the corresponding Hamiltonian isotopy. 
Then $\varphi$ lifts along $\rho$ to some conic 
\[\widehat{\varphi}: I \times \dot T^*(M \times \RR) \to \dot T^*(M \times \RR).\] 

On the locus $I \times T^*M \times \dot T^* \RR$, i.e. where $\tau \ne 0$, the corresponding Hamiltonian function is
\[\widehat{H}:=\tau H(-,\rho(-)) : I \times T^*M \times \dot T^* \RR  \to \RR.\]  

The extension of $\varphi$ over $\tau = 0$ has the following property.  
Let
\begin{equation} \label{S H formula} S_{H}^z(q,p)=\int_0^z [\lambda(X_{H_s})-H_s]\circ \varphi^s_{H}(q,p)ds
\end{equation}
be the symplectic action function.  Then there exists
$v\in C^\infty(I)$
such that: 
\begin{equation}\label{equation: formula of hat-varphi}
    \begin{aligned}
     &\widehat{\varphi}(z,q,t,p,\tau)=(\tau\cdot\varphi(z,q,p/\tau), t-S_{H}^z(q,p/\tau),\tau), &\tau \neq 0,\\
&\widehat{\varphi}(z,q,t,p,0)=(q,p, t+v(z),0), &\tau =0.
   \end{aligned}
\end{equation}
\end{proposition}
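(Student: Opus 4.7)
The plan is to define the lift $\widehat\varphi$ as the Hamiltonian flow of an explicit conic function $\widehat H$ on $\dot T^*(M\times\RR)$, check that $\rho$ intertwines $\widehat\varphi$ with $\varphi$ where $\tau\ne 0$, and then verify that $\widehat H$ extends smoothly across $\{\tau=0\}\cap\dot T^*(M\times\RR)$ so the flow can be integrated there as well.

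First, on $\{\tau\ne 0\}$ I would set $\widehat H(z,q,t,p,\tau):=\tau H(z,q,p/\tau)$, which is smooth and $\Rp$-homogeneous of degree one in $(p,\tau)$. Working with $\omega=dp\wedge dq+d\tau\wedge dt$ and the sign convention $\iota_{X_{\widehat H}}\omega=-d\widehat H$, the identity $\dot\tau=-\partial_t\widehat H=0$ shows $\tau$ is conserved. Setting $u:=p/\tau$, a short chain-rule computation gives $\dot q=\partial_p\widehat H=\partial_u H(q,u)$ and $\dot u=\dot p/\tau=-\partial_q H(q,u)$, so $(q(z),u(z))=\varphi^z(q_0,u_0)$, proving $\rho\circ\widehat\varphi^z=\varphi^z\circ\rho$ on $\{\tau\ne 0\}$. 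For the $t$-coordinate, $\dot t=\partial_\tau\widehat H=H(q,u)-u\,\partial_u H(q,u)=-(\lambda(X_H)-H)(q,u)$, which integrates along the flow to $t(z)-t(0)=-S_H^z(q_0,u_0)$ by (\ref{S H formula}), giving the first line of (\ref{equation: formula of hat-varphi}).

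Second, I would extend $\widehat H$ smoothly across $\{\tau=0\}\cap\dot T^*(M\times\RR)$. Since $dH$ has compact support $K\subseteq I\times T^*M$, for each $z$ the function $H(z,\cdot)$ is locally constant outside the compact slice $K_z\subseteq T^*M$; assuming $M$ connected for notational clarity, $H(z,\cdot)\equiv v'(z)$ outside $K_z$ for a smooth $v:I\to\RR$ with $v(0)=0$ (if $M$ is disconnected one chooses $v$ on each component). Near any point with $\tau=0$ and $p\ne 0$, a small enough neighborhood makes $|p/\tau|$ sufficiently large that $(q,p/\tau)\notin K_z$, so there $\widehat H=\tau v'(z)$, which is manifestly smooth across $\tau=0$. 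Because the Hamiltonian vector field on $\{\tau\ne 0\}$ is conic of degree zero and $\varphi$ itself is defined on all of $I\times T^*M$, the flow extends to a global smooth $\widehat\varphi:I\times\dot T^*(M\times\RR)\to\dot T^*(M\times\RR)$.

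Third, Hamilton's equations for the extension near $\{\tau=0\}$ use $\widehat H=\tau v'(z)$, yielding $\dot q=\dot p=\dot\tau=0$ and $\dot t=v'(z)$, which integrates to $(q,t,p,0)\mapsto(q,t+v(z),p,0)$, the second line of (\ref{equation: formula of hat-varphi}). The main obstacle is the smooth extension of $\widehat H$ across $\tau=0$: the raw formula $\tau H(q,p/\tau)$ is generally singular there, and only the compact-support hypothesis on $dH$ rescues it by forcing $H(q,\cdot)$ to be eventually constant, so that $\tau H(q,p/\tau)$ agrees with $\tau v'(z)$ in a neighborhood of $\{\tau=0,\ p\ne 0\}$.
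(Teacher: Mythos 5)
The paper does not actually prove this statement---it is quoted from Guillermou--Kashiwara--Schapira, with only a remark that ``the compact support condition on $H$ arises in the construction of $v(z)$ by solving a differential equation.'' Your argument is the natural one and matches that remark: your $v$ solves $v'(z)=c(z)$ where $c(z)$ is the locally constant value of $H_z$ outside the support of $dH_z$. Your computation on $\{\tau\ne 0\}$ is correct with the paper's conventions: $\dot\tau=0$, the reduced variables $(q,p/\tau)$ follow $\varphi$, and $\dot t=\partial_\tau\widehat H=H-u\partial_uH=-(\lambda(X_H)-H)$ integrates to $-S_H^z$.

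The one genuine gap is in the sentence ``$H(z,\cdot)\equiv v'(z)$ outside $K_z$.'' Local constancy of $H_z$ on $T^*M\setminus K_z$ gives a single constant only if the relevant region is connected, and the relevant connectedness is that of $T^*M$ near fiber infinity, not of $M$ (your caveat about disconnected $M$ does not address this). Concretely, near a point $(q_0,t_0,p_0,0)$ with $p_0\ne 0$, the points $(q,p/\tau)$ for $\tau>0$ and for $\tau<0$ escape to \emph{antipodal} fiber directions, so you need $H_z$ to take the same value on both. For $M$ connected with $\dim M\ge 2$ the region $\{|p|>R\}$ is connected and this is automatic; but for $\dim M=1$ it has two components, and no normalization of $H$ (which is only determined by $\varphi$ up to a function of $z$) can reconcile different values there. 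For the shear $H(q,p)=\chi(p)$ on $T^*S^1$, with $\chi$ equal to $0$ for $p\ll0$ and $1$ for $p\gg0$, one finds $\partial_\tau\widehat H\to\chi(\pm\infty)$ as $\tau\to0^{\pm}$ (for $p>0$), so $\widehat H$ is not $C^1$ across $\tau=0$ and the two one-sided limits of the $t$-component of $\widehat\varphi_z$ differ by $z$; no single $v$ works. So your proof (and, strictly speaking, the statement) needs either the hypothesis that $H_z$ tends to one common constant at fiber infinity---automatic in the paper's main setting of closed connected $M$ of dimension at least $2$---or a separate treatment of the one-dimensional case. Apart from making this hypothesis explicit, the argument is complete.
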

We call this $\widehat{\varphi}$ the conification of $\varphi$. 

\begin{remark}
    The condition that $H$ has compact support serves to ensure that a certain differential equation characterizing $v(z)$ has a solution.
\end{remark}

\begin{corollary}\cite[Corollary 2.3.2.]{Guillermou-book} \label{GKS tamarkin version}
    Given a compactly supported Hamiltonian isotopy $\varphi: I \times T^*M \to T^*M$, 
    there is a unique $\cK(\varphi) \in \Sh\left(I, \sT\left(T^*(M\times M) \right) \right)$ such that 
    $\cK(\varphi)|_0 \cong 1_{\Delta_{M}}^\sT$, and  
    \begin{equation}\label{equatio: GKS T-linear total graph}
        \dot{ss}(\cK(\varphi))/\RR_+\subseteq \{(z, - H(z,{\varphi}_{z}(q,p))  ,(q,-p),{\varphi}_{z}(q,p),-S_{H}^z(q,p) ) : z\in I, (q,p)\in T^*M\}.
    \end{equation}

    Moreover, if $\varphi, \varphi'$ are generated by compactly supported Hamiltonians with $\varphi' \leq  \varphi$, then
    there is a map $\cK(\varphi')|_{1}\rightarrow \cK(\varphi)|_{1}$.
\end{corollary}
\begin{proof} 
We first apply the GKS theorem to $\phi=\widehat{\varphi}$ to obtain a sheaf $K(\widehat{\varphi})\in \Sh(I\times (M\times  \RR)^2)$. By the formula \eqref{equation: formula of hat-varphi}, we have $\tau'=-\tau$, then
\[\dot{ss}(K(\widehat{\varphi}))\subseteq \Lambda_{\widehat{\varphi}} \subseteq \{\tau+\tau'=0\}.  \]

By \cite[Proposition 5.4.5]{Kashiwara-Schapira-SoM}, for $m(z,q_1,t_1,q_2,t_2)=(z,q_1,q_2,t_2-t_1)$, we have $K(\widehat{\varphi})\cong m^{*}m_*K(\widehat{\varphi})$. Then we can take $\cK(\varphi)$ as the image of $m_*K(\widehat{\varphi}) \in \Sh(I\times M^2 \times \RR)$ under the natural functor $\Sh(I\times M^2 \times \RR)\simeq \Sh(I;\Sh( M^2 \times \RR))\rightarrow \Sh(I;\Tam( M^2 ))$.

If $\varphi' \leq \varphi$, we cannot use Theorem \ref{theorem: GKS} directly since $\widehat{\varphi'} \leq \widehat{\varphi}$ is not true on whole $\dot{T}^*(M\times \RR)$. Therefore, we embedding $\Sh\left(I, \sT\left(T^*(M^2) \right) \right)\hookrightarrow \Sh(I\times M^2\times \RR)$. Under this identification, we have $\cK(\varphi)\simeq \cK( {\varphi})\star 1_{\RR_{\geq 0}}$. Then we have that $ss(\cK(\varphi)) \subseteq ss(\cK( {\varphi}))\cap \{\tau\geq 0\} $ by Lemma \ref{convolution support estimate}. In the rest part of the proof, we always think of $\cK(\varphi)$ as a sheaf in $\Sh(I\times M^2\times \RR)$ under the fully embedding.

We first reduce to the case that $\varphi'=\id$. For an open interval $J$ containing $[0,1]$, let us consider the $2$-parameter family of Hamiltonian $\Phi: J\times I \times T^*M \to T^*M$ defined by $\Phi_{s,t}(q,p)=\varphi(st,q,p)$. Then we apply the $2$-parameter family of GKS theorem (c.f. \cite[Remark 3.9]{Guillermou-Kashiwara-Schapira}) to $\widehat{\Phi}$, and the previous discussion provides us an sheaf $\cK(\Phi)\in \Sh(J\times I;\Tam( M^2 ))  \hookrightarrow \Sh(J\times I\times M^2 \times \RR)  $ . It satisfies the $\cK(\Phi)_{s = 0} = 1_{I\times \Delta_{M}\times \RR_{\geq 0}}$, $\cK(\Phi)_{s = 1} = \cK(\varphi)$ and $\cK(\Phi)\simeq \cK({ {\Phi}} )\star 1_{\RR_{\geq 0}}$. Importantly, $\cK(\Phi)$ admits the following microsupport estimation similar to $ {\varphi}$
 \[\dot{ss}(\cK(\Phi))/\RR_+\subseteq \{(s,-zH,z, -sH  ,(q,-p),{\varphi}_{sz}(q,p),-S_{H}^{sz}(q,p) ) : (s,z)\in J\times I, (q,p)\in T^*M\}.\]

 We refer to \cite[Proposition 6.1.C]{Polterovich_Geometry_SympDiff} for the computation of of the $T^*(J\times I)$-component. 

Consequently, for an open interval $I_0\subset I\cap (0,\infty)$, we have the microsupport constraint $ss(\cK(\Phi)|_{I_0})\subseteq {T^*_{\leq 0}J} \times T^*(I_0 \times M^2\times \RR)$ follows from the formula above. Then the morphism $\cK(\varphi')|_{I_0}\rightarrow \cK(\varphi)|_{I_0}$ follows from applying \cite[Proposition 4.8]{Guillermou-Kashiwara-Schapira} or \cite[Proposition 3.2]{Kuo-wrapped-sheaves} to $\cK(\Phi)|_{I_0}$, which descends to a morphism in $\Sh\left(I_0, \sT\left(T^*(M^2) \right)\right)$. In particular, we can take $I_0=(1-\epsilon,1+\epsilon)$, and the restriction to $1$ gives the required morphism.
\end{proof}

\begin{corollary}For each $z\in I$, 
    there is an equivalence $\cK({\varphi})|_z \circ_\sT -: \sT(U)\xrightarrow{\sim} \sT({\varphi}_z(U))$.     
\end{corollary}
\begin{proof}
Note that
$\cK({\varphi})|_z \circ_\sT -: \sT(T^*M)\rightarrow \sT(T^*M)$
defines an equivalence, as its inverse is given by 
$\cK({\varphi}^{-1})|_z$.  It remains to check how this acts
on microsupports.  We set
\begin{equation}\label{equation: slicewise graph, non homogeneous case}
   \hatLam_{\varphi_z}\coloneqq \{((q,-p),{\varphi}_{z}(q,p),-S_{H}^z(q,p) ) :  (q,p)\in T^*M\}. 
\end{equation}

Then $\dot{ss}(\cK(\varphi)|_{z})/\RR_+\subseteq \hatLam_{\varphi_z}$ and 
and so
$\underline{ss}(\cK({\varphi})|_z \circ_\sT F)= \varphi_z(\underline{ss}(F))$.
\end{proof}

In fact, as explained in \cite[Proposition 4.18]{JunZhang}, a similar argument in families shows that if ${\varphi}_z$ is fixed, then $\cK({\varphi})|_z$ depends on the relative homotopy class of the path $[s\in [0,z] \mapsto \varphi_s]$.

Finally, let us recall the following lemma:
\begin{lemma}[{\cite[Lemma 4]{gammasupport-as-micro-support}}]\label{lemma: sheaves fixed by GKS}For $F\in \Sh(M,\sT)$ and a compactly supported Hamiltonian function $H: I\times T^*M\rightarrow \RR$. If $\supp(H_s)\cap \underline{ss}(F) =\varnothing$, then $F {\simeq} \cK({\varphi^H})|_1 \circ_\sT F$. When $H$ is non-negative, the equivalence is induced by the continuation map $1_{\Delta_{M}}^\sT \to  \cK(\varphi^{H})|_1$.    
\end{lemma}
\begin{proof}Let us explain its proof here for our later applications. We define $G=\cK(\varphi^{H_\alpha})\circ_\sT F$ in $ \Sh(M\times I,\sT)$. Then $(s,\sigma,q,p)\in \underline{ss}(G)$ if there exists $(q',p')\in T^*M$ such that $(s,\sigma,q,p)=(s,-H(s,\varphi^H(s,q',p')),\varphi(s,q',p'))$, and then $\sigma=0$ under the condition. So, we have $F=G|_{s=0}\xrightarrow{\simeq} G|_{s=1}=\cK({\varphi^H})|_1 \circ_\sT F$ by \cite[Prop. 5.4.5]{Kashiwara-Schapira-SoM}. When $H\geq 0$, the statement is not included in \cite{gammasupport-as-micro-support}, however it is clear that the morphism is induced from the continuation map. 
\end{proof}

\section{Trace of the Tamarkin category} \label{section trace of tamarkin}

For $U \subseteq T^*M$, we have by now shown that $\sT(U)$ is $\sT$-linear (Prop. \ref{t linear}) and dualizable over $\sT$ (Prop. \ref{t dualizable}).  Thus we have a well defined element
\[\Tr(1_{\sT(U)}) \in \sT\]

We introduce the notation 
\[P_U : \sT(T^*M) \to \sT(T^*M)\]
for the projector with image $\sT(U)$.  
Recalling that 
$\sT(T^*M) = \Sh(M, \sT)$, the projector
$P_U$ can be expressed via an 
integral kernel $P_U \in \Sh(M \times M, \sT)$.  Given the projector in such a form, we have, by Lemma \ref{duality-of-retraction} and Corollary \ref{trace of sheaves}: 
\begin{equation} \label{trace of projector diagonal}
    \Tr(1_{\sT(U)},{\sT(U)}) = 
    \Tr(P_U,\Sh(M, \sT)) = 
    \Gamma_c^\sT(M, \Delta_\sT^* P_U) \in \sT,
\end{equation}
where $\Delta: M \to M \times M$ is the inclusion of the diagonal. For example, we have $\Tr(1_{\sT(T^*M)})\simeq \Gamma_c^\sT(M, 1^{\sT}_M)\simeq \Gamma_c(M, 1_{M})\otimes 1^{\sT}$.  (Recall from 
Section \ref{Tamarkin-coefficient-shaves} that we use the $\sT$ sub- and super-scripts to remind the reader we are using the $\sT$-linear six operations.) 

In short, the problem of computing 
the trace of $\sT(U)$ is reduced to that of explicitly expressing the projector $P_U$ as an integral kernel. 

We will also be interested in the projector $Q_U$ to the full subcategory of $\Sh_{U^c}(M, \sT)$.  For formal reasons, there is a fiber sequence 
\[P_U \to 1_{\Delta_{M}}^\sT \to Q_U .\]

Then we have an fiber sequence of traces
\begin{equation}\label{equation:fiber sequence for trace}
    \Tr(1_{\sT(U)}) \rightarrow  \Tr(1_{\sT(T^*M)}) \rightarrow\Tr(1_{\Sh_{U^c}(M, \sT)}).
\end{equation}

In fact, much work has already been done
on expressing the projectors 
and computing the RHS of 
Equation \eqref{trace of projector diagonal} \cite{Chiu-nonsqueezing, Zhang-capacities-Chiu-Tamarkin, Zhang-S1-Chiu-Tamarkin, Kuo-wrapped-sheaves}.  In this section we recall these results and further develop related ideas.

\subsection{Projector via Fourier transform and cutoff} \label{fourier projector}

Given a sheaf $F$ on a manifold $M$, 
one can `cut off' the support of the sheaf to some closed $c: C \subseteq M$
by e.g. $F \mapsto c_* c^* F$.  
The classical `devissage' arguments
in sheaf theory amount to the 
fact that $c_* c^*$ is the projector 
associated to $\Sh(C) \to \Sh(M)$.  

In favorable situations, one can perform a `microlocal cutoff', for instance by composing cutoffs and Fourier transform.  
Such a cutoff for the Tamarkin category was constructed in \cite{Chiu-nonsqueezing} when $U$ is a ball; in fact the method works more generally:

\begin{theorem}[{\cite[Proposition 2.8]{Zhang-capacities-Chiu-Tamarkin}}]Let $\varphi^H$ be a complete Hamiltonian flow on $T^*X$ with a Hamiltonian function $H$. Assume that there exists a sheaf quantization, i.e. some \[\cK(\varphi^H) \in \Sh(\RR_z,\sT(T^*(M\times M)))\] such that $\cK(\varphi^H)|_0=1_{\Delta_{M}}^\sT$ and $\underline{ss}(\cK(\varphi^H))$ is contained in the Lagrangian graph of $\varphi^H$. If we further assume that, for all $\zeta>0$, the level set $\{H=\zeta\}$ is compact, then for the open set $U=\{H>0\}$, we have that the fiber sequence $
P_U  \rightarrow 1_{\Delta_{M}}^\sT \rightarrow  Q_U   $
is isomorphic to
\[{\cK(\varphi^H)}\circ_\sT 1_{\{t+z\zeta\geq 0\}}[1] \circ [1_{\RR_{>0}}\rightarrow 1_\RR \rightarrow 1_{\RR_{\leq 0}} ].\]
\end{theorem}

\begin{remark}
Let us explain the idea of the theorem. 
Recall that $\underline{ss}({\cK(\varphi^H)})$ is bounded by the Lagrangian graph of $\varphi^H$, i.e.
\begin{equation}
\underline{ss}({\cK(\varphi^H)})\subseteq \{z, -H(q,p), q, -p,\varphi_z(q, p)):(z,q,p)\in  \RR\times T^*X \}.    \end{equation}

We want to cut off the microsupport of $\cK(\varphi^H)$ on $\zeta$-variable. 
If we use the Fourier-Sato-Tamarkin 
transform to the $z$-variable, i.e. $\widehat{\cK(\varphi^H)}=\cK(\varphi^H)\circ_\sT 1_{\{t+z\zeta\geq 0\}}[1]\in \Sh(\RR_{\zeta},\Tam(T^*(M\times M))$, then, by \cite[Theorem 1.14.]{Zhang-capacities-Chiu-Tamarkin}, we have 
\begin{equation}\label{fourier transform of sheaf quantization}
\underline{ss}(\widehat{\cK(\varphi^H)})\subseteq \{ (H(q,p),z, q, -p,\varphi_z(q, p)):(z,q,p)\in  \RR\times T^*X \}.    
\end{equation}

Now composition with $1_{\RR_{\zeta >0}} \in \Sh(\RR_\zeta)$ produces the desired element of $\sT(T^*(M \times M))$. 
\end{remark}

For Hamiltonian functions $H$ with compactly supported derivative, 
existence of the sheaf quantization follows from 
Cor. \ref{GKS tamarkin version} above.  Chiu constructed such a sheaf quantization of $H(q,p)=(q^2+p^2)/2$ on $\RR^{2n}$ \cite{Chiu-nonsqueezing}. 
We do not know a general result on existence of sheaf quantization 
for not compactly supported Hamiltonian.

\subsection{Projector via wrapping} \label{wrapping projector}

Motivated by ideas of \cite{Nadler-pants, Ganatra-Pardon-Shende3}, it was shown in \cite{Kuo-wrapped-sheaves} that for any closed set $ X \subseteq S^*M$, the left adjoint $\iota^*$ to the inclusion $\iota_*: \Sh_X(M) \hookrightarrow \Sh(M)$ can be computed `by wrapping'. More precisely, 
 
  \begin{theorem}[{\cite[Thm. 1.2]{Kuo-wrapped-sheaves}}] \label{chris thesis}
      If $H_\alpha$ is any increasing sequence of positive compactly supported Hamiltonians supported on $S^*M \setminus X$ such that $H_\alpha \to \infty$ pointwise in this locus.  Then 
 \[\iota^* F = \varinjlim (K(\phi_{H_\alpha})|_1 \circ F) = (\varinjlim K(\phi_{H_\alpha})|_1) \circ F.\]
 That is, $\varinjlim K(\phi_{H_\alpha}) \circ $
 is left adjoint to  $\iota_*: \Sh_X(M) \hookrightarrow \Sh(M)$. 

 Moreover, the unit of the adjoint is given by the map $1_{\Delta_M} \to \varinjlim K(\phi_{H_\alpha})|_1$, which is induced by the continuation map $1_{\Delta_M} \to  K(\phi_{H_\alpha})|_1$ defined by positivity of $H_\alpha$.
  \end{theorem}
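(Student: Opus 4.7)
The task decomposes into three assertions: (i) for every $F \in \Sh(M)$, the object $L(F) := \varinjlim_\alpha K(\phi_{H_\alpha})|_1 \circ F$ lies in $\Sh_X(M)$; (ii) the functor $L$ is left adjoint to the inclusion $\iota_*$; and (iii) the unit of the adjunction agrees with the map assembled from the continuation morphisms $1_{\Delta_M} \to K(\phi_{H_\alpha})|_1$ of Theorem \ref{theorem: GKS}.

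The core of the argument is (i). Fix a point $(x_0, \xi_0) \in \dot T^*M$ whose class in $S^*M$ lies outside $X$; we must show $(x_0, \xi_0) \notin ss(L(F))$. A priori one only has the general inclusion for filtered colimits, which is too weak. The approach of \cite{Kuo-wrapped-sheaves} uses instead the local microlocal test via sections over a positive half-space. Combining the wrapping identity $\dot{ss}(K(\phi_{H_\alpha})|_1 \circ F) = \phi_{H_\alpha}(\dot{ss}(F))$ with the non-characteristic deformation lemma of Kashiwara--Schapira, one identifies the local obstruction to $(x_0, \xi_0) \notin ss\bigl(K(\phi_{H_\alpha})|_1 \circ F\bigr)$ with the analogous local obstruction for $F$ at a displaced point. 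The continuation maps $K(\phi_{H_\alpha})|_1 \to K(\phi_{H_\beta})|_1$ arrange these displaced obstructions into a telescope, and because $H_\alpha(x_0,\xi_0) \to \infty$ the telescope's colimit vanishes, so $(x_0, \xi_0) \notin ss(L(F))$.

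For (ii), exploit that convolution preserves colimits and that each $K(\phi_{H_\alpha})|_1$ is an autoequivalence of $\Sh(M)$ with inverse $K(\phi_{H_\alpha}^{-1})|_1$. For $G \in \Sh_X(M)$ one computes
\[
\Hom_{\Sh_X(M)}(L(F), G)
=\varprojlim_\alpha \Hom_{\Sh(M)}\bigl(F,\, K(\phi_{H_\alpha}^{-1})|_1 \circ G\bigr).
\]
Since $H_\alpha$ vanishes on an open conic neighborhood of $X$, the isotopy $\phi_{H_\alpha}^{-1}$ is the identity there, and the uniqueness clause of Theorem \ref{theorem: GKS} identifies the restriction of $K(\phi_{H_\alpha}^{-1})$ with $1_\Delta$ on that neighborhood; together with $ss(G) \subseteq X$, this forces the continuation map $K(\phi_{H_\alpha}^{-1})|_1 \circ G \to G$ to be an isomorphism. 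Hence the inverse system is constant at $\Hom(F,G)$ and the adjunction follows. Assertion (iii) is then immediate: the maps $1_{\Delta_M} \to K(\phi_{H_\alpha})|_1$ assemble to $1_{\Delta_M} \to \varinjlim_\alpha K(\phi_{H_\alpha})|_1$, and convolution with $F$ tracks, under the identifications above, to the identity on $\Hom(F,G)$.

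\textbf{Main obstacle.} Step (i) is where the real work lives: one needs a genuine Morse-theoretic local analysis at $(x_0, \xi_0)$ to see that the positive wrapping telescope actually cancels microsupport rather than merely shuffling it around. Steps (ii) and (iii) are essentially formal once one grants the observation that a Hamiltonian vanishing in a neighborhood of $X$ acts trivially on objects of $\Sh_X(M)$.
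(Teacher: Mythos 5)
Your decomposition into (i) landing in $\Sh_X(M)$, (ii) the adjunction, (iii) identification of the unit is sensible, and steps (i) and (iii) track what the paper's Remark~\ref{remark: wrapping formula for kernel} sketches (for (i), both you and the paper defer the pointwise microsupport cancellation to the explicit Hamiltonian families constructed in \cite{Kuo-wrapped-sheaves}). Where you diverge is step (ii): the paper establishes the adjunction abstractly, by interleaving any two cofinal wrapping sequences (using compactness of $M$) to show that the colimit kernel $L$ is independent of the chosen family, hence idempotent, and then reads off the colocalization; you instead compute $\Hom_{\Sh_X(M)}(L(F),G)$ directly and reduce to showing that the continuation map $K(\phi_{H_\alpha}^{-1})|_1\circ G\to G$ is invertible for $G\in\Sh_X(M)$. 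Both routes ultimately rest on the same input---the continuation maps act by isomorphisms on $\Sh_X(M)$---so the difference is one of packaging; yours is more concrete, the paper's is more categorical.

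The one substantive imprecision is in your justification of the key lemma in (ii). You argue that because $H_\alpha$ vanishes on a conic neighborhood of $X$, ``the uniqueness clause of Theorem~\ref{theorem: GKS} identifies the restriction of $K(\phi_{H_\alpha}^{-1})$ with $1_\Delta$ on that neighborhood.'' This doesn't quite parse: the locus where $\phi_{H_\alpha}^{-1}$ is the identity is a conic open subset of $\dot T^*M$, not an open subset of $M$, so there is no open subset of $M\times M$ to which one can restrict $K(\phi_{H_\alpha}^{-1})$ and compare with $1_\Delta$ via GKS uniqueness; the GKS kernel is determined as a sheaf on the whole of $M\times M$, and ``restricting to a conic neighborhood'' would require a microlocalization argument you haven't set up. The clean way to get the claim is through the parametrized family: for $G\in\Sh_X(M)$, the object $K(\phi_{H_\alpha}^{-1})\circ G\in\Sh(I\times M)$ has microsupport bounded by $\Lambda_{\phi_{H_\alpha}^{-1}}\circ \dot{ss}(G)$, whose $z^\vee$-component is $-(-H_\alpha)(z,\cdot)$ evaluated on $\phi_z(\dot{ss}(G))\subseteq X$, which vanishes since $H_\alpha\equiv 0$ near $X$. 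So the family is noncharacteristic over $I$, hence (by the noncharacteristic deformation lemma) constant in $z$, giving $K(\phi_{H_\alpha}^{-1})|_1\circ G\cong K(\phi_{H_\alpha}^{-1})|_0\circ G = G$, and one checks this isomorphism is realized by the continuation map. With this substitution your step (ii) goes through, and the rest of the argument is fine.
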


  \begin{remark}\label{remark: wrapping formula for kernel}
  Let us give the idea of the proof. 
  By compact support of $M$, any two such sequences of such $H_\alpha$ can be interleaved, hence 
have the same colimit. 
In particular, 
it follows from this that 
$K_X=\varinjlim K(\phi_{H_\alpha})|_1$
is idempotent and the equivalence $K_X\xrightarrow{\simeq} K_X\circ K_X$ is induced by the continuation map. Next, the kernels $K(\phi_{H_\alpha})$ 
preserve $\Sh_X(M)$ by the same discussion of Lemma \ref{lemma: sheaves fixed by GKS}, and we need to check that 
$ ss( K_X \circ F) ) \subseteq X$, and moreover it 
suffices to do some for one family $H_\alpha \to \infty$.  
In fact, it moreover suffices to construct
one such family for each point of $T^*M \setminus X$
to witness that $K_X \circ F$ has no microsupport at this point.  
Such a sequence of Hamiltonians is described in \cite{Kuo-wrapped-sheaves}. Then we have that the functor $K_X \circ $ is the projector with image $\Sh_X(M)$, and being left adjoint by \cite[Proposition 5.2.7.4]{Lurie-HTT}.

(The above argument differs from \cite{Kuo-wrapped-sheaves} in that in said reference, the colimit is taken over 
an $\infty$-categorical `wrapping category'.  For the purposes here, one can compute the colimit by mapping telescope. The constructions agree: It is explained in \cite[Lemma 3.31]{Kuo-wrapped-sheaves} that such a family $H_\alpha $ is cofinal in the ``wrapping category", and one can compute the sequential colimit by mapping telescope because the inclusion of simplicial sets $\mathbb{N}\rightarrow N(\mathbb{N})$ is cofinal.)
\end{remark}

We have
the following corresponding $\sT$-linear result for Tamarkin categories.
\begin{proposition}\label{wrapping formula for projector}
    Let $U \subseteq T^*M$ be an open set, and let $H_\alpha$ be any increasing sequence of compactly supported Hamiltonians which are supported on $U$ and such that $H_\alpha(u) \to \infty$ for all $u \in U$.  Let $\cK({\varphi^{H_\alpha}})|_1$ be their sheaf quantizations from Corollary \ref{GKS tamarkin version}, which form a directed system
    along continuation maps.  We define 
    \begin{equation} \label{qu} Q'_U := \varinjlim\nolimits_{\alpha}\cK(\varphi^{H_\alpha})|_1,
\end{equation}
    then $Q_U \cong Q'_U$,
and the morphism $1_{\Delta_{M}}^\sT \rightarrow Q_U$ is intertwined with the limit of continuation maps $1_{\Delta_{M}}^\sT \to  \cK(\varphi^{H_\alpha})|_1$.
\end{proposition} 
\begin{proof}
We cannot directly apply Theorem \ref{chris thesis}, because $\widehat{\varphi^{H_\alpha}}$ is not compactly supported.   However, 
we can adapt the ideas of the proof, as recalled in Remark \ref{remark: wrapping formula for kernel}, to the present setting.  

Recall the following general fact.  Suppose given a category $C$, endofunctor $F: C \to C$, and natural transformation $\eta: 1_C \to F$ such that $F \circ \eta: F \to F^2$ is an equivalence.  Then $F$ is the projector onto the full subcategory  $\{x | \eta: x \xrightarrow{\sim} F(x)\} \subset C$ (see e.g. \cite[Proposition 5.2.7.4]{Lurie-HTT}).   
We will apply this to the endofunctor $Q'_U \circ_\sT$, and the natural transformation induced from continuation maps.  To establish the result of the proposition, it then suffices to show that (i) $Q'_U \circ_\sT (1 \to Q'_U)$
gives an isomorphism $Q'_U \xrightarrow{\sim} Q'_U \circ_\sT Q'_U$, and (ii) 
$F \to Q'_U \circ_\sT F$ is an isomorphism iff $F \in \Sh_{U^c}(M;\sT)$. 

To check (i), note that any two sequences of such (compactly supported!) $H_\alpha$ can be interleaved, hence have the same colimit. Therefore,
\[ Q_U'\circ Q_U'=\varinjlim\nolimits_{\alpha,\beta}\cK( {\varphi^{H_\alpha}})|_1 \circ_\sT \cK( {\varphi^{H_\beta}})|_1 = \varinjlim\nolimits_{\alpha,\beta}\cK( {\varphi^{H_\alpha+H_\beta}})|_1 \xleftarrow{\simeq} Q_U',\]
because $\{H_\alpha\}_\alpha$ and $\{H_\alpha+H_\beta\}_{\alpha,\beta}$ can be interleaved, and in addition, the isomorphism is induced by the continuation maps $1_{\Delta_{M}}^\sT  \to  \cK(\varphi^{H_\alpha})|_1$.

It remains to prove (ii). For $F\in \Sh_{U^c}(M;\sT)$, Lemma \ref{lemma: sheaves fixed by GKS} shows the continuation map induces $F\xrightarrow{\simeq} \cK({\varphi^{H_\alpha}})|_1 \circ_\sT F$. Passing to the colimit, we have $F\xrightarrow{\simeq} Q_U'\circ_\sT F$.  Finally we must show that, conversely, if $F \xrightarrow{\sim} Q'_U \circ_\sT F$, then $F \in \Sh_{U^c}(M;\sT)$.  
The basic point
is that we may show that any given point in $U$ is excluded from the microsupport using an appropriately adapted choice of sequence of $H_\alpha$.  We now explain in detail. 

Take $(q,p,t)\in U\times \RR$. We will show that $(q,p,t,1)\notin ss( Q'_U \circ_{\sT} F) \cap \{\tau=1\}$. As the situation is local, we assume $(q,p,t,\tau)=(0,0,0,1)$, then we take the test function $f(q,t)=t$. By openness of $U\times \RR$, we can take a system of box neighborhood $B_n\times C_n\times I_n\subset U$ of $(0,0,0)$ shrinks to $\{(0,0,0)\}$. 
Take a sequence of functions $G_n \geq 0$ that is supported in $B_{n+1}\times C_{n+1}\times I_{n+1}$, equals to $1/n$ on $B_n\times C_n\times I_n$.

\begin{figure}[htbp]
    \centering
\begin{tikzpicture}[xscale=1,yscale=0.8]
\draw[->] (-5,0)--(5,0)node[anchor=north] {$J^1M$};
\draw[->] (0,0)--(0,2);
\draw[fill=black] (0,0) circle (.05) node[anchor=south east] {$(0,0,0)$};

\draw[->] (-3,-0.6)--(-3,-0);
\draw[->] (3,-0.6)--(3,-0);
\draw (-3,-0.5)-- node[fill=white,inner sep=1mm,midway] {$B_{n}\times C_{n}\times I_{n}$} (3,-0.5);

\draw[->] (-4,-1.1)--(-4,-0);
\draw[->] (4,-1.1)--(4,-0);
\draw (-4,-1)-- node[fill=white,inner sep=1mm,midway] {$B_{n+1}\times C_{n+1}\times I_{n+1}$} (4,-1);
 
\draw (-4,0){[rounded corners]--(-3.6,0)--(-3.3,1)--(3.3,1)--(3.6,0)--(4,0)};
\draw[fill=black] (0,1) circle (.05) node[anchor=south west]{$G_n(0,0,0)=1/n$};

\end{tikzpicture}

\end{figure}

The effect of $\cK(G_n)|_1\circ_\sT$ near $(0,0)$ is a small vertical translation, so \[\cK(G_n)|_1\circ_\sT 1_{B_n \times I_n \cap \{t<0\}}=1_{B_n \times I'_n },\] where $I'_n $ is small translation of $I_n\cap \{t<0\}$ along the $\RR$-direction.  

Notice that, by the interleaving argument for idempotence, we have that, for all $n$, \[\cK(G_n)|_1\circ_\sT Q'_U = \varinjlim\nolimits_{\alpha}\cK(G_n)|_1\circ_\sT \cK({\varphi^{H_\alpha}})|_1 =\varinjlim\nolimits_{\alpha}  \cK({\varphi^{H_\alpha+G_n}})|_1 \simeq Q_U'.\]

Therefore, we have
\begin{eqnarray*}
   \Gamma(B_n \times I'_n,Q'_U \circ_\sT F) &=& \Hom( 1_{B_n \times I'_n}, Q'_U \circ_\sT F)\\
   &=& \Hom( 1_{B_n\times I_n\cap \{t<0\}}, \cK(G_n)|_1\circ_\sT Q'_U \circ_\sT F) \\
   & \simeq &\Gamma(B_n\times I_n\cap \{t<0\},Q'_U \circ_\sT F).
\end{eqnarray*}

Taking colimit over $n$, we have $\Gamma_{\{t\geq 0\}}(Q'_U \circ_\sT F)_{(q,p,t)}=0$.  By definition of $ss$, we see $(q,p,t) \notin ss( Q'_U \circ_\sT F )\cap \{\tau=1\}$. 
\end{proof}

 \subsection{Tensor product and integral functors}
For this section, we take the point of view that, for an open set $U \subseteq T^* M$, the Tamarkin category $\sT(U)$ is the quotient of the inclusion $\Sh_{U^c}(M;\sT) \hookrightarrow \Sh(M;\sT)$, where the subscript indicates the not-necessarily-conic microsupport condition on $\underline{ss}(F)$. Thus, all the functors, categorical operations, etc. will automatically be $\sT$-linear as discussed in Remark \ref{tamarkin-six-functors}. In this subsection, we take open sets $U\subseteq T^*M$ and $V\subseteq T^*N$ and we set $Z=T^*M\setminus U$ and $X=T^*N\setminus V$.

\begin{proposition}\label{proposition: tensor product closed of kernels}We set $W=T^*M\times T^*N \setminus Z\times X $. Then we have $Q_W \simeq Q_U \boxtimes_{\sT} Q_V=s_!(Q_U \boxtimes Q_V)$.    
\end{proposition}
\begin{proof}We have $\Sh_{Z}(M;\sT) \otimes_{\sT} \Sh_{X}(M;\sT) \hookrightarrow \Sh_{Z\times X}(N\times M;\sT)$ by Lemma \ref{lemma: product reduced microsupport}. We set $Q=Q_U \boxtimes_{\sT} Q_V$, and then we have $Q\circ_{\sT} Q \simeq Q$. We conclude by proving that $F\in \Sh_{Z\times X}(N\times M;\sT)$ if and only if of $Q \circ _\sT F \simeq F$. Then the definition of $Q_W$ implies that $Q_W\simeq Q$.

If $Q \circ _\sT F \simeq F$, then we write $F=\varinjlim A\boxtimes_\sT B$ and we see that $F \simeq\varinjlim (Q_U\circ_\sT A)\boxtimes_\sT (Q_V\circ_\sT B) $, which is clear in $\Sh_{Z\times X}(N\times M;\sT)$ by Lemma \ref{lemma: product reduced microsupport}. 

Conversely, take $F\in \Sh_{Z\times X}(N\times M;\sT)$. Using the wrapping formula, we have that $Q \simeq \varinjlim_{(\alpha,\beta)}\cK(\varphi^{H_\alpha})|_1 \boxtimes_\sT \cK(\varphi^{G_\beta})|_1$ for cofinal sequences of non-negative Hamiltonians with $\supp(H_\alpha)\subset U$ and $\supp(G_\beta)\subset V$. Then Lemma \ref{lemma: sheaves fixed by GKS} shows that $\cK(\varphi^{H_\alpha})|_1 \boxtimes_\sT \cK(\varphi^{G_\beta})|_1 \circ_\sT  F\simeq F$, and the isomorphism is compatible with the continuation map. In the colimit, we have $Q \circ _\sT F \simeq F$.
\end{proof}
With the product property of kernel $Q$, we have
\begin{proposition}\label{proposition: tensor product closed}We set
\[\Sh_{U\times X }(M\times N;\sT)\coloneqq \Sh_{T^*M\times  X}(M\times N;\sT)/\Sh_{Z\times X}(M\times N;\sT).\]

Then we have an isomorphism of fiber sequences
\begin{equation*}
\begin{split}
       &\Sh_{Z\times X}(M\times N;\sT) \rightarrow \Sh_{ T^*M \times X  }(M\times N;\sT) \rightarrow  \Sh_{U\times X }(M\times N;\sT) \\
       \simeq & [\Sh_{Z}( M;\sT) \rightarrow \Sh( M;\sT) \rightarrow  \sT(U)]  \otimes_\sT \Sh_X(N;\sT).
\end{split}
\end{equation*}
\end{proposition}
\begin{proof} 
By \cite[Corollary 2.29]{Haine}, we have that the 
\[[\Sh_{Z}( M;\sT) \rightarrow \Sh( M;\sT) \rightarrow  \sT(U)]  \otimes_\sT \Sh_X(N;\sT)\]
is a Verdier sequence. Therefore, $\Sh_{Z}( M;\sT) \otimes_\sT  \Sh_X(N;\sT) \rightarrow \Sh( M;\sT) \otimes_\sT  \Sh(N;\sT)$ is fully-faithful, and we can identify the target of the functor by Corollary \ref{tamarkin-category-product} with $\Sh(M\times N;\sT)$. To conclude the statement, it remains to prove that $\Sh_{Z}( M;\sT) \otimes_\sT  \Sh_X(N;\sT)\simeq \Sh_{Z\times X}(M\times N;\sT)$.

By Corollary \ref{lemma: product reduced microsupport}, the functor $\Sh_{Z}( M;\sT) \otimes_\sT  \Sh_X(N;\sT) \rightarrow \Sh( M;\sT) \otimes_\sT  \Sh(N;\sT)$ factor through the fully-faithful subcategory $\Sh_{Z\times X}(M\times N;\sT)$. Therefore, we only need to show that $\Sh_{Z\times X}(M\times N;\sT)$ is also the essential image. 

To do so, we run the same argument as as Proposition \ref{tamarkinsheaf} to show that $\Sh_{Z\times X}(M\times N;\sT) \subset \Sh_{Z}( M;\sT) \otimes_\sT  \Sh_X(N;\sT)$. Any object $F\in \Sh_{Z\times X}(M\times N;\sT)$ can be written as a colimit $F=\varinjlim A\boxtimes_\sT B $ for $A\in \Sh_{Z}( M;\sT)$ and $B\in \Sh_{X}( N;\sT)$. On the other hand, by definition of $Q_W $, we know that $F\simeq Q_W\circ_\sT F$. Then $Q_W \simeq Q_U \boxtimes_{\sT} Q_V$ implies that 
\[F\simeq Q_W\circ_\sT F \simeq \varinjlim Q_U \circ_\sT A \boxtimes_{\sT} Q_V \circ_\sT B \in \Sh_{Z}( M;\sT) \otimes_\sT  \Sh_X(N;\sT).\qedhere\]
\end{proof}
\begin{proposition}\label{proposition: tensor product open}We have the equivalence
\[\sT(U)\otimes_\sT \sT(V) = \sT( U \times V).\]
\end{proposition}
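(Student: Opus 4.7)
The strategy is to apply the two compatibility results already proven---Corollary \ref{tamarkin-category-product} and Proposition \ref{proposition: tensor product closed}---together with the fact that the tensor product in $\PrLst(\sT)$ preserves colimits in each variable.

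First, tensoring the defining cofiber sequence $\Sh_X(M;\sT)\to \Sh(M;\sT)\to \sT(U)$ with $\sT(V)$ gives
\[\sT(U)\otimes_\sT \sT(V) \;\simeq\; \cof\!\bigl(\Sh_X(M;\sT)\otimes_\sT \sT(V)\;\to\; \Sh(M;\sT)\otimes_\sT \sT(V)\bigr).\]
To compute the two terms on the right, I tensor the defining sequence $\Sh_Z(N;\sT)\to \Sh(N;\sT)\to \sT(V)$ with $\Sh(M;\sT)$ and with $\Sh_X(M;\sT)$ respectively. Using Corollary \ref{tamarkin-category-product} to identify $\Sh(M;\sT)\otimes_\sT \Sh(N;\sT)\simeq \Sh(M\times N;\sT)$, and Proposition \ref{proposition: tensor product closed} to identify the subcategory factors, this yields
\[\Sh(M;\sT)\otimes_\sT \sT(V)\simeq \sT(T^*M\times V), \qquad \Sh_X(M;\sT)\otimes_\sT \sT(V)\simeq \Sh_{X\times V}(M\times N;\sT),\]
where the latter is defined (following the convention of Proposition \ref{proposition: tensor product closed}) as the quotient $\Sh_{X\times T^*N}(M\times N;\sT)/\Sh_{X\times Z}(M\times N;\sT)$.

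Combining, $\sT(U)\otimes_\sT \sT(V)$ is the cofiber of $\Sh_{X\times V}(M\times N;\sT)\to \sT(T^*M\times V)$, equivalently the cofiber in $\PrLst(\sT)$ of the map $\Sh_{X\times T^*N}(M\times N;\sT)\oplus \Sh_{T^*M\times Z}(M\times N;\sT)\to \Sh(M\times N;\sT)$. To finish, I must identify this with $\sT(U\times V) = \Sh(M\times N;\sT)/\Sh_{(X\times T^*N)\cup (T^*M\times Z)}(M\times N;\sT)$, since $(U\times V)^c = (X\times T^*N)\cup(T^*M\times Z)$. The main obstacle is the microlocal point that the cocomplete stable subcategory of $\Sh(M\times N;\sT)$ generated by objects of microsupport in $X\times T^*N$ or in $T^*M\times Z$ coincides with $\Sh_{(X\times T^*N)\cup (T^*M\times Z)}(M\times N;\sT)$. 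The non-obvious inclusion uses that any $F$ with $\underline{ss}(F)\subseteq (X\times T^*N)\cup (T^*M\times Z)$ sits in a microlocal cutoff triangle whose outer terms have microsupport in each piece separately (as in \cite[Proposition 5.4.1]{Kashiwara-Schapira-SoM}, transported to the Tamarkin setting via \cite[Proposition 4.13]{Guillermou-Schapira}).
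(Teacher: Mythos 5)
Your argument is correct and is essentially the paper's proof run in the opposite direction: the paper applies the third isomorphism theorem for Verdier quotients to write $\sT(U\times V)\simeq\Sh_{T^*M\times V}(M\times N;\sT)/\Sh_{X\times V}(M\times N;\sT)$ and then identifies the terms via Proposition \ref{proposition: tensor product closed}, whereas you expand $\sT(U)\otimes_\sT\sT(V)$ as an iterated cofiber and arrive at the same comparison. Both routes hinge on the identical ``Mayer--Vietoris'' input you single out at the end --- that the localizing subcategory of $\Sh(M\times N;\sT)$ generated by $\Sh_{X\times T^*N}$ and $\Sh_{T^*M\times Z}$ is all of $\Sh_{(X\times T^*N)\cup(T^*M\times Z)}(M\times N;\sT)$ --- and this is precisely what underwrites the paper's third-isomorphism-theorem step, though the paper leaves it implicit, so you are right to flag it. One small correction to your sketch of that step: \cite[Proposition~5.4.1]{Kashiwara-Schapira-SoM} and \cite[Proposition~4.13]{Guillermou-Schapira} are the inputs to Lemma \ref{lemma: product reduced microsupport} (the microsupport bound for $\boxtimes_\sT$), not a cutoff-triangle statement; the decomposition one actually wants is the unit triangle for the projector $\iota_{X*}\iota_X^*\otimes\id_{\Sh(N;\sT)}$ onto $\Sh_{X\times T^*N}$, together with a check that its fiber on an $F$ with $\underline{ss}(F)\subseteq(X\times T^*N)\cup(T^*M\times Z)$ has $\underline{ss}$ contained in $T^*M\times Z$ (available, for instance, from the wrapping description, since the Hamiltonians computing $\iota_X^*$ are pulled back from $T^*M$ and therefore preserve the condition $\underline{ss}\subseteq T^*M\times Z$).
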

\begin{proof}By the third isomorphism theorem for the Verdier quotient, we have an equivalence
\[\sT(U\times V)\simeq \Sh_{T^*M \times V}(M\times N;\sT)/\Sh_{Z\times V}(M\times N;\sT).\]

The Proposition \ref{proposition: tensor product closed} shows that
\[\Sh_{Z\times V}(M\times N;\sT) \simeq \Sh_{Z}(M;\sT)\otimes_\sT \sT(V),\quad \Sh_{T^*M\times V}(M\times N;\sT) \simeq \Sh(M;\sT)\otimes_\sT \sT(V),\]
and we have the following equality
\[[\Sh_{Z\times V}(M\times N;\sT) \hookrightarrow \Sh_{T^*M\times V}(M\times N;\sT)] = [\Sh_{Z}(M;\sT) \hookrightarrow \Sh(M;\sT) ]\otimes_\sT \sT(V).\]

Then the result follows.
\end{proof}

\begin{proposition} \label{classification-of-kernels}We have the equivalence 
\begin{align*}
\sT(-U \times V) &\xrightarrow{\sim} \Fun^L_{\sT}\left(\sT(U),\sT(V)\right) \\
K &\mapsto (F \mapsto K \circ_\sT F).
\end{align*} 
\end{proposition}
\begin{proof}
For any symmetric monoidal category $\cM$, there is a canonical equivalence $\inHom(X,Y) = X^\vee \otimes Y$, for any dualizable object $X$. Thus, the right hand side is given by $\Fun^L_{\sT}\left(\sT(U),\sT(V)\right) = \sT(U)^\vee \otimes_\sT \sT(V)$ by Lemma \ref{duality-of-retraction} and Proposition \ref{T linear retract}. A small modification of Remark \ref{dual-microsupport-condition} implies that $\sT(U)^\vee = \sT(-U)$.
\end{proof}

\subsection{Microsupport estimation of trace for open sets with restricted contact boundary}\label{section: RCT estimation} 
Recall that a compact smooth hypersurface $S\subseteq T^*M$ is of restricted contact type (RCT) if the Liouville field on $T^*M$ is outward pointingly transverse to $S$. In particular, $S$ is a contact manifold.

\begin{lemma}[{\cite[(74)]{JunZhang}, \cite[Lemma 3.28]{zhang-thesis}}]\label{lemma-actionspectrumestimate}Let $U$ be a bounded open set such that $\partial U$ is a RCT hypersurface.  
Consider $\Tr(1_{\sT(U)}) \in \sT \hookrightarrow \Sh(\RR)$.  Its
microsupport is contained in the action spectrum: 
\begin{equation}
    \label{action spectrumestimate}\dot{ss}\big(\Tr(1_{\sT(U)})\big) /\RR_+\subseteq \mathcal{A}(\partial U) \subseteq [0,\infty).
\end{equation}
\end{lemma}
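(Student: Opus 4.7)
The plan is to compute $F_1(U,\mathbf{k})$ via the wrapping formula for the projector (Corollary \ref{wrapping formula for projector}) combined with the microsupport estimate for sheaf quantizations (Corollary \ref{GKS tamarkin version}), applied to a family of wrapping Hamiltonians adapted to the contact form on $\partial U$. The fiber sequence $P_U \to 1_{\Delta_{T^*M}} \to Q_U$ yields a fiber sequence $F_1(U,\mathbf{k}) \to \underline{a}_!\underline{\Delta}^* 1_{\Delta_{T^*M}} \to F_1^{out}(U,\mathbf{k})$; the middle term is (a shift of) $\Gamma_c(M,\mathbf{k}) \otimes 1_{\RR_{\geq 0}}$, whose $\dot{ss}/\RR_+$ is empty, so it suffices to bound $\dot{ss}(F_1^{out}(U,\mathbf{k}))/\RR_+$.

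For this, I would use the restricted contact type hypothesis to fix a Liouville collar $\partial U \times (1-\epsilon, 1] \hookrightarrow \overline{U}$ on which the Liouville form equals $r\alpha$ with $\alpha$ the induced contact form on $\partial U$, and construct a cofinal increasing sequence of nonnegative compactly supported Hamiltonians $H_\alpha$ supported in $U$, each depending only on the Liouville coordinate $r$ in the collar (say $H_\alpha|_{\mathrm{collar}} = f_\alpha(r)$), with $H_\alpha \to \infty$ pointwise on $U$. For such Hamiltonians, the 1-periodic orbits of $\varphi^{H_\alpha}$ split into critical points of $H_\alpha$ (with vanishing action) and orbits in the collar in bijection with Reeb orbits on $\partial U$ of appropriate period; by formula \eqref{S H formula} the symplectic action of such an orbit converges, as $f_\alpha'$ is refined, to the Reeb action of the associated Reeb orbit. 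At each finite stage, Corollary \ref{GKS tamarkin version} gives $\dot{ss}(\cK(\varphi^{H_\alpha})|_1)/\RR_+$ as the Lagrangian parametrized by $(q,p)$ with $\RR$-cotangent $-S^1_{H_\alpha}(q,p)$; applying the standard microsupport estimates for $\underline{\Delta}^*$ (equal base points, cotangents summed) and $\underline{a}_!$ (vanishing fiber cotangent in $M$) forces $\varphi^{H_\alpha}_1(q,p) = (q,p)$, so $\dot{ss}(\underline{a}_!\underline{\Delta}^*\cK(\varphi^{H_\alpha})|_1)/\RR_+$ is contained in the set of symplectic actions of 1-periodic orbits of $\varphi^{H_\alpha}$.

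Finally, I would pass to the colimit: by Corollary \ref{wrapping formula for projector} one has $Q_U = \varinjlim_\alpha \cK(\varphi^{H_\alpha})|_1$, realized as a mapping telescope, and both $\underline{\Delta}^*$ and $\underline{a}_!$ commute with filtered colimits. Since microsupport is upper semicontinuous along mapping telescopes, $\dot{ss}(F_1^{out}(U,\mathbf{k}))/\RR_+$ lies in the closure of the union over $\alpha$ of the action sets from the previous step. By the construction of $H_\alpha$ these sets converge to $\mathcal{A}(\partial U) \cup \{0\}$; positivity of the contact form on Reeb vectors gives $\mathcal{A}(\partial U) \subseteq [0,\infty)$, and the $0$ from critical points is absorbed into the spectrum by convention. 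The main obstacle is this last step: justifying the colimit-microsupport interchange and verifying that the limiting action set is precisely $\mathcal{A}(\partial U)$ rather than something strictly larger. This is exactly where the slope-based choice of $H_\alpha$, made possible by the restricted contact type hypothesis, is essential; with an arbitrary exhaustion one would obtain a less informative limiting set.
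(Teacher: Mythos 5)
The paper does not actually prove this lemma; it is quoted from \cite{Zhang} (for the ball) and \cite{zhang-thesis} (in general). The only in-paper indication of the argument is the proof of Lemma \ref{lemma: negative part vanishing}, which runs ``the similar argument of Lemma \ref{lemma-actionspectrumestimate}'' through the finite-stage estimate $\dot{ss}(\underline{a}_!\underline{\Delta}^*\cK(\varphi^{H_\alpha})|_1)/\RR_+\subseteq \mathcal{S}(H_\alpha)$. Your route --- reduce to $Q_U$ via the fiber sequence, invoke the wrapping formula (Corollary \ref{wrapping formula for projector}) with the collar-adapted cofinal sequence of Section \ref{section: RCT estimation}, and push the bound of Corollary \ref{GKS tamarkin version} through $\underline{\Delta}^*$ and $\underline{a}_!$ --- is exactly that route, and the finite-stage estimate is correct (modulo the routine point that $\underline{a}_!$ is proper on the relevant support because $\cK(\varphi^{H_\alpha})|_1\cong 1_{\Delta_{T^*M}}$ outside a compact set). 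A small slip: the middle term $\underline{a}_!\underline{\Delta}^*1_{\Delta_{T^*M}}=\Gamma_c(M,\mathbf{k})\otimes 1_{\RR_{\geq 0}}$ has $\dot{ss}/\RR_+=\{0\}$, not $\varnothing$; this is harmless only because $0\in\mathcal{A}(\partial U)$ by convention.

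The genuine gap is the limit step, and it originates in a concrete error: a constant orbit at a critical point of $H_\alpha$ does \emph{not} have vanishing action. By \eqref{S H formula}, $-S^1_{H_\alpha}(q,p)=H_\alpha(q,p)$ at such a point, so $\mathcal{S}(H_\alpha)$ contains the plateau value $\alpha$ and, more generally, the values $\rho_\alpha(c)$ over critical levels $c$ of $H$; these sweep out an unbounded set as $\alpha$ grows. Likewise the non-constant $1$-periodic orbits of $X_{H_\alpha}$ sitting on level sets $\{H=c\}$ \emph{outside} the collar are closed characteristics of hypersurfaces that need not be of contact type, with actions $|\rho_\alpha'(c)|\oint\lambda+\rho_\alpha(c)$ not governed by $\mathcal{A}(\partial U)$. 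Consequently $\overline{\bigcup_\alpha\mathcal{S}(H_\alpha)}$ is far larger than $\mathcal{A}(\partial U)$, so the colimit estimate $\dot{ss}(\varinjlim_\alpha G_\alpha)\subseteq\overline{\bigcup_\alpha\dot{ss}(G_\alpha)}$ (which is the correct general statement, being the assertion that categories of sheaves with prescribed closed microsupport are closed under colimits) does not yield the lemma; your claim that the sets ``converge to $\mathcal{A}(\partial U)\cup\{0\}$'' is false as stated. What is actually needed is a local argument at each $t_0\notin\mathcal{A}(\partial U)$: using locality of microsupport and cofinality, one must arrange the sequence so that for every bounded window around $t_0$ all spurious singular values have escaped to $+\infty$ for $\alpha$ large --- which requires controlling the critical points of $H$ and the closed characteristics of the intermediate level sets, not just the collar data --- and only then apply the finite-stage bound on the tail of the system. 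This is the actual content of \cite[Lemma 3.28]{zhang-thesis}, and it is missing from the proposal.
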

\begin{proof}
    What was actually calculated in the mentioned references is right hand side of \eqref{trace of projector diagonal}.  Specifically,  \cite[(74)]{JunZhang} treated the case of $U$ a ball, and \cite[Lemma 3.28]{zhang-thesis} verified that a similar argument works in the general case.
\end{proof}

When $\partial U$ is of RCT, we can take a Hamiltonian function $H$ such that $U=\{H<1\}$ and $\partial U=\{H=1\}$ is a regular hypersurface. In particular, for an $e>0$ small enough, we can identify a collar neighborhood $N$ of $\partial U=\{H=1\}$ with $(1-e,1+e)\times \partial U$, and under this identification, we have $H(r,y)=r$ for $(r,y)\in (1-e,1+e)\times \partial U=N$. Now, we can take a particular cofinal sequence $H_\alpha$ to define the kernel $Q_U$.

For $\alpha>0$, we can take $C^\infty$-functions $\rho_\alpha: I\times [0,1+e)\rightarrow [0,\infty)$ such that, for each $z\in I$,
\begin{enumerate}
    \item $\rho_\alpha(z,-)=\alpha$ on $[0,d_\alpha]$ for $d_\alpha>0$ small enough;
    \item $\rho_\alpha(z,-)=0$ on $(1-e_\alpha,1+e)$, $0<e_\alpha<e$;
    \item $\rho_\alpha'(z,-)<0$ and $\rho_\alpha''(z,-)>0$ on $(2d_\alpha,1-2e_\alpha)$;
    \item $\rho_\alpha'(z,-)<0$ elsewhere.
\end{enumerate}

We set $H_\alpha=\rho_\alpha\circ H$. Then there exists $\rho_\alpha$ such that $H_\alpha$ form a cofinal sequence, and moreover we assume all non-constant 1-periodic orbits of $X_{H_\alpha}$ are non-degenerate.

Then we use this sequence $H_\alpha$ to exhibit the wrapping formula $Q_U \cong \varinjlim_{\alpha}\cK(\varphi^{H_\alpha})|_1.$
\begin{lemma}\label{lemma: negative part vanishing}   If $U$ has a RCT boundary and $L<0$, we have \[i_{L}^*\Tr(1_{\sT(U)})=0 \qquad \qquad i_{L}^*\Tr(1_{\Sh_{U^c}(M;\sT)})=0.\]
\end{lemma}
\begin{proof}By the fiber sequence \eqref{equation:fiber sequence for trace} and $\Tr(1_{\sT(T^*M)}) \simeq \Gamma_c(M,1_M)\otimes 1^{\sT}$, we only need to prove $i_{L}^*\Tr(\Sh_{U^c}(M;\sT))=0$ for $L<0$.

Actually, by Lemma \ref{lemma-actionspectrumestimate}, it is sufficient to prove $i_{L}^*\Tr(\Sh_{U^c}(M;\sT))=0$ for $L\ll 0$. By virtue of the wrapping formula, it remains to show that for the cofinal sequence $H_\alpha$ taken as above, we have $i_{L}^*\underline{a}_{!}\underline{\Delta}^*\cK(\varphi^{H_\alpha})|_{1}=0$ for big enough $\alpha$ and $L\leq -1$. 

Consider \[\mathcal{S}(H)=\{t=-S_{H}^1(q,p):(q,p) \text{ is a fixed point of }\varphi^{H}_1\}.\]

Using the given cofinal sequence $H_\alpha$, we have that $\mathcal{S}(H_\alpha)$ is a discrete subset of $\RR$, and if $\alpha \gg 0$, we have $\mathcal{S}(H_\alpha)\subseteq \RR_{t\geq -1}$ for $\alpha\gg 0$. Then the same argument of Lemma \ref{lemma-actionspectrumestimate} implies that $\dot{ss}(\underline{a}_{!}\underline{\Delta}^*\cK(\varphi^{H_\alpha})|_{1})/\RR_+\subseteq  \mathcal{S}(H_\alpha) \subseteq  \RR_{t\geq -1} $. Therefore, by the microlocal Morse lemma, we only need to show that $i_{L}^*\underline{a}_{!}\underline{\Delta}^*\cK(\varphi^{H_\alpha})|_{1}=0$ for $L\ll 0$.

On the other hand, it is explained in \cite[Section 4]{Guillermou-Viterbo} that, for all compactly supported functions $H$, $\cK(\varphi^{H})|_{1}$ is isomorphic to $1^\sT_{\Delta_ {M}  }$ outside a compact set of $M^2\times \RR$ (Argument therein is microlocal, so the coefficient category does not matter.) In particular, we have $i_{L}^* {\underline{a}_{!}\underline{\Delta}^*}\cK(\varphi^{H})|_{1}=0$  for $L\ll 0$. Then the result follows.  
\end{proof}

\section{Hochschild cohomology}
Let $\cA$ be a $\mathbf{k}$-linear symmetric monoidal category and $\sC$ be an $\cA$-linear category $\sC$. For a functor $f:\sC\rightarrow \sC$ in $\PrLst(\cA)$, we consider the Hochschild cochains and cohomology (in case that $\cA$ has a t-structure): 
\[\HH_\cA^\bullet(\sC,f)\coloneqq \Hom_{\End_{\cA}{(\sC)}/\cA}(1_\sC,f), \qquad \HH_\cA^*(\sC,f)\coloneqq H^*\HH^\bullet(\sC,f).\]

We can also discuss the $\mathbf{k}$-linear Hochschild cochains
\[\HH_\mathbf{k}^\bullet(\sC,f)\coloneqq \Hom_{\End_{\mathbf{k}}{(\sC)}/\mathbf{k}}(1_\sC,f).\]

If we consider $\End_{\cA}{(\sC)}$ as a $\mathbf{k}$-linear category, we have a couple of comparison morphisms, as we discussed in Section \ref{subsection: Extending coefficient},
\begin{equation}\label{equation: comparision morphism of HH}\Hom_{\cA/\mathbf{k}}(1_\cA,\HH_\cA^\bullet(\sC,f))=\Hom_{\End_{\cA}{(\sC)}/\mathbf{k}}(1_\sC,f)\rightarrow \Hom_{\End_{\mathbf{k}}{(\sC)}/\mathbf{k}}(1_\sC,f)=\HH_\mathbf{k}^\bullet(\sC,f) \,\in \mathbf{k}.
\end{equation}
\begin{lemma}\label{lemma: Hochschild comparison}Let $X$ be a $\mathbf{k}$-linear dualizable category and $\sC$ be a $\cA$-linear quotient of $X\otimes \cA$. If the regular representation $\rho_\cA$ (see \eqref{regular representation}) is fully-faithful, then the comparison map \eqref{equation: comparision morphism of HH} is an isomorphism.    
\end{lemma}
\begin{proof}We use the left adjoint of the quotient functor $X\otimes \cA \rightarrow \sC$ to treat $\sC$ as a full subcategory of $X\otimes \cA$. Then this is a consequence case of Corollary \ref{lemma: restriction fully-faithful for quotient}.    
\end{proof}

In this section, we will mainly discuss the case $\cA=\sT$ and $\sC=\sT(U)$.

\subsection{Calabi-Yau structure} \label{CY and cohomology}
In this section we give a Calabi-Yau structure on $\sT(U)$, and use this to 
relate the Hochschild homology and cohomology of this category. We will fix and will frequently use $a:M \rightarrow \pt$ as the constant map on $M$ and $\pi:M^2\rightarrow \pt$ as the constant map on $M^2$.

Recall we have fixed our coefficients  $\mathbf{k}$ to be a  compactly generated symmetric monoidal category.  As always, the  dualizing sheaf on $M$ is by definition $\omega_{M,\mathbf{k}}\coloneqq a^{!}1_{\mathbf{k}}$, and a $\mathbf{k}-$orientation of $M$ is defined as an isomorphism of sheaves $\omega_{M,\mathbf{k}}= a^{!}1_{\mathbf{k}}\simeq a^{*}1_{\mathbf{k}}[n] $. When $\mathbf{k}=\Mod_R$ for a commutative ring spectrum $R$, this agrees with the usual notion of $R$-orientation \cite[Proposition 6.18]{volpe_6_functor}. 

The same proposition implies that if $M$ is $\mathbf{k}-$orientable,  the orientation induces an equivalence of functors $a^{!}_{\sT}\simeq a^{*}_{\sT}[n]$ and we have $\omega_{M}^{\sT}\coloneqq a^{!}_{\sT}1^{\sT}\simeq a^{*}_{\sT}1^{\sT}[n]$. 

Now, we explain the right Calabi-Yau property of the Tamarkin category. We require a version of Calabi-Yau structure suitable for categories with presentable coefficients, for which we follow \cite[Section 4.3.5]{Brav-Rozenblyum}. 

We start from properness. For a presentable symmetric monoidal category $\cA$ and an $\cA$-linear dualizable category $\sC$, we say $\sC$ is proper if the $\cA$-linear counit map $\epsilon: \sC^\vee\otimes_\cA \sC\rightarrow \cA$ has a continuous $\cA$-linear right adjoint $\epsilon^r: \cA \rightarrow\sC^\vee\otimes_\cA \sC$ (In particular, it requires that $\epsilon^r$ has a right adjoint). If $\sC$ is proper, we call $\id^{\vee}_{\sC}=\epsilon^r(1_\cA)\in \sC^\vee\otimes_\cA \sC=\End_{\cA}(\sC)$ the Serre functor of $\sC$. In this case, the $\cA$-linear dual of $\epsilon$ is represented by $\epsilon^r$, and we have a natural identification
\begin{equation}\label{equation: proper category HH=HH}
 \inHom_{\cA}( \Tr(\id_\sC), 1_\cA)= \HH_\cA^\bullet(\id_\sC, \id^{\vee}_{\sC}). \end{equation}

\begin{proposition} \label{tamarkin proper serre} For open sets $U\subseteq T^*M$,  $\sT(U)$ is proper and has  Serre functor $$\id^{\vee}_{\sT(U)}(F)=F\otimes_{\sT} \omega_{M}^{\sT}.$$
\end{proposition}
\begin{proof}By Lemma \ref{duality-of-retraction}, the counit of $\sT(U)$ as a dualizable category is computed by
\[\sT(U)^\vee\otimes \sT(U) \hookrightarrow \sT(T^*M)^\vee\otimes \sT(T^*M)=\sT(T^*(M\times M))\xrightarrow{a^{\sT}_!\Delta^*_{\sT}} \sT.\]

The right adjoint of the first functor is the natural quotient functor $\sT(T^*M)^\vee\otimes \sT(T^*M) \rightarrow\sT(U)^\vee\otimes \sT(U) $, which is continuous. The last functor has right adjoint $\Delta_*^{\sT}a_{\sT}^!$, which is naturally equivalent to $\Delta_!^{\sT}(a_{\sT}^*(-) \otimes_{\sT} \omega_{M}^{\sT})$. Therefore, the right adjoint $\Delta_*^{\sT}a_{\sT}^!$ is continuous. 

Then the counit has a continuous right adjoint, i.e. $\sT(U)$ is $\sT$-linearly proper. In particular, the Serre functor $\id^{\vee}_{\sT(U)}$ of $\sT(U)$ is given by $\id^{\vee}_{\sT(U)}(F)=F\otimes_{\sT} \omega_{M}^{\sT}$ for $F\in \sT(U)$. 
\end{proof}

\begin{corollary}\label{proposition: tw HH co using sheaf}We have 
\[\HH_{\sT}^\bullet(\sT(U),\id^{\vee}_{\sT(U)}) \simeq\sHom_\sT(\Tr(1_{\sT(U)}), 1^\sT)\simeq  \pi_*^\sT\sHom_\sT(P_U, \Delta_{*}^{\sT}\omega_{M}^\sT)\in \sT.\]

In particular, when $M$ is $\mathbf{k}$-orientable, we have
\[\HH_{\sT}^\bullet(\sT(U))\simeq\sHom_\sT(\Tr(1_{\sT(U)}), 1^\sT [-n]) . \]
\end{corollary}
\begin{proof}The equivalence between Hochschild cohomology valued in the Serre functor and the linear dual of Hochschild homology is a general fact of proper categories as explained in \eqref{equation: proper category HH=HH}. We apply the general fact to $\sT$-linear Hochschild (co)homology of the proper $\sT$-linear category $\sT(U)$ to obtain $\HH_{\sT}^\bullet(\sT(U);\id^{\vee}_{\sT(U)}) \simeq\sHom_\sT(\Tr(1_{\sT(U)}), 1^\sT)$. The equivalence $\sHom_\sT(\Tr(1_{\sT(U)}), 1^\sT)\simeq \pi_*^\sT\sHom_\sT(P_U, \Delta_{*}^{\sT}\omega_{M}^\sT)$ follows adjunctions of 6-operations of $\sT$-valued sheaves.

For the last statement, notice that when $M$ is $\mathbf{k}$-orientable, we have the equivalence of functor: $\id^{\vee}_{\sT(U)}\simeq \id_{\sT(U)}[n]$ by Proposition \ref{tamarkin proper serre}.
\end{proof}

It is known that there exists a $S^1$-action on $\HH_\bullet(\sC)=\Tr(\id_\sC)$. 
We say $\sC$ is (absolutely) right $d$-Calabi-Yau, if there exists an $S^1$-equivariant equivalence $\Tr(\id_\sC)\rightarrow 1_\cA[-d]$ (equipping $1_\cA$ with the trivial action). Under the natural identification \eqref{equation: proper category HH=HH}, the Calabi-Yau structure gives an $S^1$-equivariant equivalence $\id_\sC\simeq \id^{\vee}_{\sC}[-d]$. 

Corollary \ref{proposition: tw HH co using sheaf} has shown that there exists an equivalence $\id_{\sT(U)}\simeq \id^{\vee}_{\sT(U)}[-n]$. Now we show that the $\sT(U)$ is right Calabi-Yau, and equivalence $\id_{\sT(U)}\simeq \id^{\vee}_{\sT(U)}[-n]$ is $S^1$-equivariant.

\begin{theorem} \label{right calabi yau}
     If $M$ is $\mathbf{k}$-orientable, then for open $U \subset T^*M$, the category $\sT(U)$ is (absolutely) right Calabi-Yau.
 In particular, we have $\HH_{\sT}^\bullet(\sT(U);\id^{\vee}_{\sT(U)}) \simeq\HH_{\sT}^\bullet(\sT(U))[n] .$
\end{theorem}
\begin{proof}
From Proposition \ref{tamarkin proper serre}, we see that
when $\omega_M$ is trivial, the Serre functor $\id^{\vee}_{\sT(U)}$ is equivalent to $\id_{\sT(U)}[n]$.  This isomorphism is moreover
induced by the following map, obtained by tracing the inclusion $\sT(U) \to \sT(T^*M)$ and composing with the orientation of $M$: 
\[\Tr_{\sT}(\id_{\sT(U)}) \rightarrow \Tr_{\sT}(\id_{\sT(T^*M)})=a^{\sT}_!\Delta^*_{\sT}(1_{\Delta_{M}}^\sT)=\Gamma_c(M,\mathbf{k)}\otimes 1^\sT \rightarrow 1^\sT[-n].\]

Finally we must check that the above morphism is $S^1$-equivariant. By the  result of \cite[Theorem 2.14]{Hoyois-Scherotzke-Sibilla} (or \cite[Proposition 4.3]{Brav-Dyckerhoff2}), the first arrow is $S^1$-equivariant. For the arrow $\Tr_{\sT}(\id_{\sT(T^*M)})\rightarrow 1^\sT[-n]$, all $S^1$-actions are trivial.
\end{proof}

\begin{remark}\label{Remark: cyclic Deligne}
By the cyclic Deligne conjecture for Hochschild cochains, now proven \cite{Kaufmann_2008, Tradler_2006, Brav-Rozenblyum}, the $\mathbb{E}_2$ structure on Hochschild cochains $\HH^\bullet(\sC)$ of a dualizable category $\sC$, which is predicted by the Deligne conjecture, can be lift to a framed $\mathbb{E}_2$-structure if $\sC$ is Calabi-Yau. Then the Hochschild cochains $\HH^\bullet_{\sT}(\sT(U))$ carries a $\mathbb{E}_2$ structure, which could be lifted to a framed $\mathbb{E}_2$-structure if $M$ is orientable. 
\end{remark}

\subsection{Hochschild cohomology from projectors}\label{section: HH computed by kernels}

Recall we write $\nT_L$ for translation by $L$ in the $\RR$ direction. We now explain how to compute Hochschild cohomologies using the projector.

\begin{proposition}\label{proposition: HH co using sheaf}

We have
\[
 \HH_{\sT}^\bullet(\sT(U), {T_L}_*) \simeq \pi_*^\sT\sHom_\sT(P_U, \nT_{L*} P_U) \simeq \pi_*^\sT\sHom_\sT(P_U, \nT_{L*} 1_{\Delta_{M}}^\sT).\]
\end{proposition}
\begin{proof}By $\sT$-linear dualizability of $\sT(U)$ (Proposition \ref{t dualizable}), we have the fully-faithful embedding $\sT(U)^\vee\otimes_\sT \sT(U) \hookrightarrow \sT(T^*M)^\vee\otimes_\sT \sT(T^*M)=\Sh(M^2;\sT)$. Therefore, by identifying $\sT(U)^\vee\otimes_\sT \sT(U)\simeq \Fun^L_{\sT}(\sT(U),\sT(U))$, we have the fully faithful embedding
\begin{align*}
    \Fun^L_{\sT}(\sT(U),\sT(U)) \hookrightarrow \Fun^L_{\sT}(\Sh(M;\sT),\Sh(M;\sT)),\\
    \cF \mapsto [ \Sh(M;\sT) \twoheadrightarrow \sT(U) \xrightarrow{\cF} \sT(U) \xrightarrow{\Phi_{P_U}}  \Sh(M;\sT)].
\end{align*}
Under this fully-faithful embedding, we have that $\nT_{L*}:\sT(U)\rightarrow \sT(U)$ is mapped to $\nT_{L*} \Phi_{P_U}:\Sh(M;\sT)\rightarrow \Sh(M;\sT)$. Here $\Phi_{P_U}$ is the integral functor defined by $P_U$.  
As we explained in Remark \ref{remark: different hom}, the $\sT$-valued hom of $\Sh(M^2;\sT) \simeq \Fun^L_{\sT}(\Sh(M;\sT),\Sh(M;\sT))$ is computed by $\pi_*^\sT\sHom_\sT$. Then we have the first equivalence
\[\Hom_{\End_\sT{\sT(U)}/\sT}(\id_{\sT(U)}, \nT_{L*})=\Hom_{\End_\sT{\Sh(M;\sT)}/\sT}(\Phi_{P_U}, \nT_{L*} \Phi_{P_U})=\pi_*^\sT\sHom_\sT(P_U, \nT_{L*} P_U) .\]

For the second equivalence, by the fiber sequence $P_U \to 1_{\Delta_{M}}^\sT \to Q_U$, we only need to prove that 
\[\Hom_{\End_\sT{\Sh(M;\sT)}/\sT}(\Phi_{P_U}, \nT_{L*} \Phi_{Q_U})=\pi_*^\sT\sHom_\sT(P_U, \nT_{L*} Q_U)=0.\]

This is true because $P_U$ defines the projector onto the left orthogonal complement of the essential image of the functor $Q_U$. Precisely, for natural transformation $\eta: \Phi_{P_U}\rightarrow \nT_{L*} \Phi_{Q_U}$ and all $F\in \Sh(M;\sT)$, we have $\eta(F)\in \pi_*^\sT\sHom_\sT(\Phi_{P_U}(F), \nT_{L*} \Phi_{Q_U}(F))=0$, and then $\eta=0$. \end{proof}

The above Proposition \ref{proposition: HH co using sheaf} thus implies its $k$-linear version:

\begin{proposition}\label{proposition: k-HH co using sheaf}For $L\in \RR$, we have
\[\HH_{\mathbf{k}}^\bullet(\sT(U),\nT_{L*}) \simeq \Hom(P_U,\nT_{L*} P_U)  \simeq \Hom(P_U,\nT_{L*} 1_{\Delta_{M}}^\sT).
\]
\end{proposition}
\begin{proof}By Proposition \ref{proposition:fully-faithful of forgetful functor between functors} and Lemma \ref{lemma: Hochschild comparison}. We have
\[\HH_{\mathbf{k}}^\bullet(\sT(U),\nT_{L*})= \Hom \left(1_\sT,  \HH_{\sT}^\bullet(\sT(U), {T_L}_*) \right).\]

Then the rest statements follow from the equation and Proposition \ref{proposition: HH co using sheaf}.
\end{proof}

\begin{remark}\label{remark: CY true for the "out" category} All results in Sections \ref{CY and cohomology}, \ref{section: HH computed by kernels} are true for $\Sh_{U^c}(M;\sT)$ by replacing $\sT(U)$ by $\Sh_{U^c}(M;\sT)$ and replacing $P_U$ by $Q_U$.
\end{remark}

It follows Proposition \ref{proposition: HH co using sheaf}, Remark \ref{remark: CY true for the "out" category} and the fiber sequence
$P_U \to 1^\sT_{\Delta_{M}} \to Q_U$ that we have fiber sequences as follows.
\begin{equation}\label{equation: HH fiber sequences}
\HH_{\sT}^\bullet(\Sh_{U^c}(M;\sT)) \to \Gamma(M,1_M^\sT)
   \to    \HH_{\sT}^\bullet(\sT(U)). 
\end{equation}

The analogous sequences for Hochschild cohomology with coefficients in the Serre functor also hold.

\subsection{Action window}
For $-\infty \leq a <b < \infty$, we introduce 
\begin{align*}
    \HH_{\sT}^\bullet(\sT(U),{(a,b]})
    &= \Hom(1_{ [-b,-a)},\HH_{\sT}^\bullet(\sT(U)))  ,\\ \HH_{\sT}^\bullet(\Sh_{U^c}(M;\sT),{(a,b]})
    &= \Hom(1_{ [-b,-a)}, \HH_{\sT}^\bullet(\Sh_{U^c}(M;\sT)))  .  
\end{align*}
By Proposition \ref{proposition: k-HH co using sheaf}, for all $L\in \RR$, we have \[\HH_{\sT}^\bullet(\sT(U),{(-\infty,L]})=\HH^\bullet_{\mathbf{k}}(\sT(U), \nT_{L*}).\]
For $-\infty \leq a <b  <c < \infty$ We have fiber sequences
\begin{equation}\label{equation: action window sequence for HH}
    \begin{split}
  &\HH^\bullet_{\sT}(\sT(U), (a,b])\rightarrow \HH^\bullet_{\sT}(\sT(U),(a,c])\rightarrow \HH^\bullet_{\sT}(\sT(U),{(b,c]})      ,\\ &\HH^\bullet_{\sT}(\Sh_{U^c}(M;\sT), (a,b])\rightarrow \HH^\bullet_{\sT}(\Sh_{U^c}(M;\sT), (a,c])\rightarrow \HH^\bullet_{\sT}(\Sh_{U^c}(M;\sT),{(b,c]}) .       
    \end{split}
\end{equation}

 \begin{lemma}\label{lemma: 9-diagram}For $\infty>L>\epsilon > 0 >- \delta\geq-\infty$, we have an isomorphism of fiber sequences:
\[\begin{tikzcd}
{\HH_{\sT}^\bullet(\Sh_{U^c}(M;\sT),(-\delta,\epsilon])   }  \arrow[d] \arrow[r]   & {\HH_{\sT}^\bullet(\Sh_{U^c}(M;\sT),{(-\delta,L]})} \arrow[d] \arrow[r] & {\HH_{\sT}^\bullet(\Sh_{U^c}(M;\sT),{(\epsilon,L]})} \arrow[d]   \\
{{\Gamma(M,\sT)}} \arrow[d] \arrow[r, "\simeq"] & {{\Gamma(M,\sT)}} \arrow[d] \arrow[r]                    & 0 \arrow[d]                                 \\
{\HH_{\sT}^\bullet(\sT(U),(-\delta,\epsilon])   }   \arrow[r]   & {\HH_{\sT}^\bullet(\sT(U),{(-\delta,L]})}  \arrow[r]                   & {\HH_{\sT}^\bullet(\sT(U),{(\epsilon,L]})}                                                                                                                                                                    
\end{tikzcd}\]
\end{lemma}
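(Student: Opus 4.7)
The plan is to deduce the $3\times 3$ diagram from the $\sT$-valued fiber sequence of \eqref{equation: HH fiber sequences},
\[\HH^\bullet_\sT(\Sh_{U^c}(M;\sT)) \to \Gamma(M, 1_M^\sT) \to \HH^\bullet_\sT(\sT(U)),\]
by applying $\Hom_\sT(1_{[-b, -a)}, -)$, which by the definition preceding the lemma recovers the action-window Hochschild cohomology $\HH^\bullet_\mathbf{k}(-, (a, b])$. For $(a, b]$ ranging over $(-\delta, \epsilon]$, $(-\delta, L]$ and $(\epsilon, L]$, this produces three fiber sequences which form the columns of the diagram, with middle terms $M(a, b] := \Hom_\sT(1_{[-b, -a)}, \Gamma(M, 1_M^\sT))$.

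The second step is to compute these middle entries. By Proposition~\ref{tamarkinsheaf}, $1_M^\sT$ is represented by $1_{M \times \RR_{\geq 0}}$, so $\Gamma(M, 1_M^\sT) \cong \Gamma(M, \mathbf{k}) \otimes_\mathbf{k} 1_{\RR_{\geq 0}}$ in $\sT$. A direct sheaf-theoretic computation in $\Sh(\RR)$, based on a naturality argument at the boundary endpoint of the source, shows
\[\Hom_\sT(1_{[c, \infty)}, 1_{\RR_{\geq 0}}) \cong \begin{cases} \mathbf{k} & c \leq 0, \\ 0 & c > 0. \end{cases}\]
Combined with the excision fiber sequence $1_{[-b, -a)} \to 1_{[-b, \infty)} \to 1_{[-a, \infty)}$, this gives $M(a, b] \cong \Gamma(M, \mathbf{k})$ when $a < 0 \leq b$ and $M(a, b] = 0$ otherwise, with identity transition maps between nested windows in the nonzero case. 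Specializing to our three windows, the middle row reads $\Gamma(M, \mathbf{k}) \xrightarrow{\sim} \Gamma(M, \mathbf{k}) \to 0$.

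The rows arise from the excision fiber sequence of test objects $1_{[-L, -\epsilon)} \to 1_{[-L, \delta)} \to 1_{[-\epsilon, \delta)}$ in $\sT$, coming from the closed-open decomposition $[-L, \delta) = [-L, -\epsilon) \sqcup [-\epsilon, \delta)$. Applying $\Hom_\sT(-, X)$ to this sequence yields a fiber sequence natural in $X \in \sT$; taking $X$ to be each of $\HH^\bullet_\sT(\Sh_{U^c}(M;\sT))$, $\Gamma(M, 1_M^\sT)$ and $\HH^\bullet_\sT(\sT(U))$ produces the three rows (with the middle row matching the second step by direct computation), compatibly with the columns by naturality. The main obstacle is the second step: one must carefully control the vanishing of $\Hom_\sT(1_{[c, \infty)}, 1_{\RR_{\geq 0}})$ in each sign regime and correctly identify the nested-window transition maps as the identity in the nonzero case.
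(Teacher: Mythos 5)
Your proof is correct and takes essentially the same approach as the paper's terse proof: the $3\times 3$ diagram is obtained by applying the fiber sequence of window-Hom functors $\Hom_\sT(1_{[-\epsilon,\delta)},-) \to \Hom_\sT(1_{[-L,\delta)},-) \to \Hom_\sT(1_{[-L,-\epsilon)},-)$ (arising from the interval decomposition $[-L,\delta) = [-L,-\epsilon) \sqcup [-\epsilon,\delta)$) to the vertical fiber sequence \eqref{equation: HH fiber sequences}, with rows and columns automatically compatible by bifunctoriality. The only extra content in your write-up is the explicit computation identifying the middle row as $\Gamma(M,\mathbf{k}) \xrightarrow{\sim} \Gamma(M,\mathbf{k}) \to 0$, which the paper asserts without detail; your sign analysis of $\Hom_\sT(1_{[c,\infty)}, 1_{\RR_{\geq 0}})$ is correct, though note that once you know the third middle entry vanishes, the isomorphism of the first arrow follows for free from the row being a fiber sequence, so you need not separately argue that the transition map is the identity.
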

\begin{proof}We apply the fiber sequence of functors
\[\Hom(1_{[-\epsilon,\delta)},-)\rightarrow \Hom(1_{[-L,\delta)} ,-)\rightarrow \Hom(1_{[-L,-\epsilon)},-)  \]
to
\[ \HH_{\sT}^\bullet(\Sh_{U^c}(M;\sT)) \to \Gamma(M,1_M^\sT)
   \to    \HH_{\sT}^\bullet(\sT(U)).\]
   
For the first and last rows, the results are tautology by definition of notations. The second row follows from
\[\Hom(1_{[a,b)},\Gamma(M,1_M^\sT))=\Hom(1_{[a,b)}, a_{*}^\sT(1_M^\sT))=\Hom(1_{M\times [a,b)}, 1_{M\times [0,\infty)}). \]
So, for $[a,b)=[-L,-\epsilon)$, we have $0$; for $[a,b)=[a,\delta)$ with $a<0$, we have $\Hom(1_{M}, 1_{M})=\Gamma(M,1_M)$.
\end{proof}

\subsection{\texorpdfstring{Recollections from \cite{Zhang-capacities-Chiu-Tamarkin, Zhang-S1-Chiu-Tamarkin} and re-interpretation}{}} \label{bingyu results}

In \cite{Zhang-capacities-Chiu-Tamarkin, Zhang-S1-Chiu-Tamarkin}, motivated by and building on \cite{Chiu-nonsqueezing}, the expression $\Gamma_c^\sT(M, \Delta_\sT^* P_U)$ was studied both in terms of its general properties, 
and values for certain specific $U$.  
Here we recall the results. 

In \cite{Zhang-capacities-Chiu-Tamarkin, Zhang-S1-Chiu-Tamarkin}, all formulas were expressed in terms of the fully faithful images of $\sT \hookrightarrow \Sh(\RR)$ and, for $U \subseteq T^*M$, of 
$\sT(U) \hookrightarrow \Sh(M \times \RR)$. 
With this identification, $\Gamma_c^\sT(M, \Delta_\sT^* P_U)$ is computed in terms of 
$\underline{a}: M \times \RR \rightarrow \RR$ and $\underline{\Delta}: M \times \RR\rightarrow M\times M \times \RR$
as 
$\underline{a}_{!}\underline{\Delta}^*P_U \in \Sh(\RR)$ (cf. Remark \ref{tamarkin-six-functors}).

We write $i_L: \{L\} \to \RR$ for the inclusion and $\nT_L$ the translation by $L$ along $\RR$-direction. The following notation was used 
\cite[(2.1), Definition 2.1]{Zhang-S1-Chiu-Tamarkin}\footnote{We call them Chiu-Tamarkin invariants in {\it loc.cit.}}:
\begin{align*}
    &F_1(U,\mathbf{k})\coloneqq \underline{a}_{!}\underline{\Delta}^*P_U \in \Sh(\RR),&\quad &C_L(U,\mathbf{k})\coloneqq \Hom(i_L^*F_1(U,\mathbf{k}),1[-n]) \in \mathbf{k};\\
    &F_1^{out}(U,\mathbf{k})\coloneqq \underline{a}_{!}\underline{\Delta}^*Q_U \in \Sh(\RR),&\quad &C_L^{out}(U,\mathbf{k})\coloneqq \Hom(i_L^*F_1^{out}(U,\mathbf{k}),1[-n]) \in \mathbf{k}.
\end{align*}

We now discuss the relation of these previous constructions to those of the
present article. Lemma \ref{duality-of-retraction} implies that
\begin{equation}\label{equation: F=Trace}
    F_1(U,\mathbf{k}) =\Tr(1_{\sT(U)}),\quad F^{out}_1(U,\mathbf{k}) =\Tr(1_{\Sh_{U^c}(M;\sT)}).
\end{equation}
Then we have 
\begin{eqnarray*}
C_L(U, \mathbf{k})  = &  \Hom(i_L^*\Tr(1_{\sT(U)}) ,1[-n]) 
& =  \Hom(\Tr(1_{\sT(U)}),1_{\{L\}}[-n]) \\
 = & \Hom(\Tr(1_{\sT(U)}),1_{\RR_{\ge L} }[-n]) 
& =  \Hom(\Tr(1_{\sT(U)}), \nT_{L*}1^\sT [-n]).
\end{eqnarray*}
Therefore, Corollary \ref{proposition: tw HH co using sheaf} shows that \[C_L(U,\mathbf{k})=\Gamma_{[-L,\infty)}(\RR,\HH_{\sT}^\bullet(\sT(U),\id^{\vee}_{\sT(U)})[-n]),\] and Remark \ref{remark: CY true for the "out" category} shows that $C_L^{out}(U,\mathbf{k})=\Gamma_{[-L,\infty)}(\RR,\HH_{\sT}^\bullet(\Sh_{U^c}(M;\sT),\id^{\vee}_{\Sh_{U^c}(M;\sT)})[-n]))$.

In Remark \ref{Remark: cyclic Deligne}, we explained that there exists a framed $\mathbb{E}_2$-structure on $\HH_{\sT}^\bullet(\sT(U))$. Corresponding chain-level Gerstenhaber product is given by the endomorphism ring structure of $\HH_{\sT}^\bullet(\sT(U))$. In \cite{Zhang-S1-Chiu-Tamarkin}, we defined by hand a `cup product' on $C_L$, when $M$ is orientable, via the identification $C_{L}(U,\mathbf{k}) \simeq \Hom(P_U, \nT_{L*}P_U) $ using the isomorphism of Proposition \ref{proposition: HH co using sheaf}. Therefore, by the identification $C_{L}(U,\mathbf{k}) \simeq \Hom(P_U, \nT_{L*}P_U)  \simeq \HH_{\sT}^\bullet(\sT(U),(-\infty,L]) $, the manually defined `cup product' on $C_L$ in \cite{Zhang-S1-Chiu-Tamarkin} is exactly the corresponding Gerstenhaber product on cohomology $\HH_{\sT}^*(\sT(U))$. 

Therefore, we reinterpret computations of $C_{L}(U,\mathbf{k})$ in \cite{Zhang-capacities-Chiu-Tamarkin,Zhang-S1-Chiu-Tamarkin} as computations of $    \HH_{\sT}^\bullet(\sT(U),(-\infty,L])$. We emphasize that those computations are done via the Fourier transform formula of projectors in Section \ref{fourier projector}. Precisely, we have: 1) In \cite{Zhang-capacities-Chiu-Tamarkin}, we showed how to compute $\HH_{\sT}^\bullet(\sT(U),(-\infty,L])$ for convex toric domains $U\subseteq T^*\RR^n$ by approximating the convex dual of the moment map image by rectangles, and computing explicitly for the rectangular approximation. 2) In \cite{Zhang-S1-Chiu-Tamarkin}, we showed the following Viterbo-type isomorphism:
\begin{theorem}[{\cite[Subsection 4.3]{Zhang-S1-Chiu-Tamarkin}}]
Let $M$ be a compact oriented Riemannian manifold, $D^*M$ its open unit disk bundle, and $L\geq 0$.  Then there is an isomorphism
\begin{equation*}
   \HH_{\sT}^\bullet(\sT(D^*M),(-\infty,L]) \cong H_{n-*}(\mathcal{L}_{\leq L}M,\ZZ) .
\end{equation*}
intertwining theg Gerstenhaber product on $\HH_{\sT}^\bullet(\sT(D^*M),(-\infty,L]) $ with the loop product. 
\end{theorem}

\subsection{\texorpdfstring{Wrapping formula for $\HH^\bullet$}{}}

We will now give some formulas expressing Hochschild cohomologies in terms of the `wrapping' formula for the projector.  We will need the formula \eqref{wrapping formula for C out} for the comparison to symplectic cohomology in the next section.

\begin{lemma} For $-\infty \leq a <b < \infty$, we have an isomorphism 
    \begin{equation} \label{wrapping sheaf hom formula of CLplus}
    \HH_{\sT}^\bullet(\Sh_{U^c}(M;\sT),(a,b])=  \varprojlim\nolimits_{\alpha}\Hom(1_{M^2 \times [-b,-a)},\sHom_\sT(\cK(\varphi^{H_\alpha})|_1,\cK(\id))),
 \end{equation}
 where the inverse system is induced by 
the continuation maps.
 \end{lemma}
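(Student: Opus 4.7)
The plan is to combine the wrapping formula of Corollary \ref{wrapping formula for projector} with the $\Sh_{U^c}$-version of Proposition \ref{proposition: HH co using sheaf} (valid per the remark following it, i.e.\ replacing $P_U$ by $Q_U$), and then perform a purely formal manipulation of Hom, limits, and adjunctions. First I would unfold the action-window definition and apply that $\Sh_{U^c}$-analog to write
\[\HH_{\mathbf{k}}^\bullet(\Sh_{U^c}(M;\sT),(a,b]) \simeq \Hom_{\sT}\!\bigl(1_{[-b,-a)},\,\pi_*^\sT \sHom^\star(Q_U,1_{\Delta_{T^*M}})\bigr).\]

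Next I would substitute the wrapping formula $Q_U \simeq \varinjlim_\alpha \cK(\varphi^{H_\alpha})|_1$. Because $\sHom^\star$ sends colimits in its first argument to limits, and $\pi_*^\sT$ is a right adjoint hence continuous, the colimit passes through to give
\[\pi_*^\sT \sHom^\star(Q_U,1_{\Delta_{T^*M}}) \simeq \varprojlim_\alpha \pi_*^\sT \sHom^\star(\cK(\varphi^{H_\alpha})|_1,1_{\Delta_{T^*M}}).\]
I then identify $1_{\Delta_{T^*M}}=\cK(\id)$, which follows from the initial condition $\cK(\varphi)|_0\simeq 1_{\Delta_{T^*M}}$ of Corollary \ref{GKS tamarkin version} applied to $\varphi=\id$. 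Finally, commuting $\Hom_{\sT}(1_{[-b,-a)},-)$ past $\varprojlim_\alpha$ and invoking the $(\pi^*_\sT,\pi_*^\sT)$-adjunction, with $\pi^*_\sT 1_{[-b,-a)}=1_{M^2\times[-b,-a)}$ (via Remark \ref{tamarkin-six-functors}, which identifies the $\sT$-linear pullback along $\pi$ with the ordinary pullback along $\pi\times\id_\RR$), produces the right-hand side as stated.

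The manipulations are entirely formal, so the main obstacle is bookkeeping: one must check that the various instances of $\Hom_{\sT}$ in the chain of equivalences refer to the correct $\sT$-linear mapping objects (some live in $\sT$ itself, others in the $\sT$-linear category $\sT(T^*(M^2))$), and that after adjunction the unit $1_{[-b,-a)}\in\sT$ really corresponds to $1_{M^2\times[-b,-a)}\in \sT(T^*(M^2))$. Given Remark \ref{tamarkin-six-functors} and the rigidity of $\sT$, I expect no genuine difficulty beyond organizing these identifications so that the displayed formula appears verbatim.
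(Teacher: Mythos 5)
Your proposal is correct and follows essentially the same argument as the paper: invoke the $\Sh_{U^c}$-variant of Proposition \ref{proposition: HH co using sheaf} together with the $(\pi^*_\sT,\pi_*^\sT)$-adjunction, substitute the wrapping formula $Q_U\simeq\varinjlim_\alpha \cK(\varphi^{H_\alpha})|_1$ and the identification $1_{\Delta_{T^*M}}=\cK(\id)$, and pass the colimit to a limit through the contravariant slot of $\sHom^\star$ and $\Hom_\sT$. The only difference from the paper's writeup is that you apply the adjunction after commuting the limit rather than before, which is immaterial.
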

\begin{proof}
First, by Proposition \ref{proposition: HH co using sheaf} and Remark \ref{remark: CY true for the "out" category}, we have
\[\HH_{\sT}^\bullet(\Sh_{U^c}(M;\sT),(a,b])=\Hom(1_{M^2 \times [-b,-a)},\sHom_\sT(Q_U,1^\sT_{\Delta_{M}})).\]
Then we plug $\cK(\id)=1^\sT_{\Delta_{M}}$ and the wrapping formula for $Q_U$ (Proposition \ref{wrapping formula for projector}):
\begin{align*}
   &\Hom(1_{M^2\times [-b,-a)},\sHom_\sT(Q_U,1^\sT_{\Delta_{M}}))\\
   =&\Hom(1_{M^2\times [-b,-a)},\sHom_\sT(\varinjlim\nolimits_{\alpha}\cK(\varphi^{H_\alpha})|_1,\cK(\id)))\\
  =&\varprojlim\nolimits_{\alpha}\Hom(1_{M^2\times [-b,-a)},\sHom_\sT(\cK(\varphi^{H_\alpha})|_1,\cK(\id))).\qedhere
\end{align*}

\end{proof}

\begin{lemma}\label{lemma: cohomology RCT vanishing}When $a<0$ and $U$ has a RCT boundary, we have
\[
\HH^\bullet_{\sT}(\sT(U), (-\infty,a])=0=\HH^\bullet_{\sT}(\Sh_{U^c}(M;\sT),  (-\infty,a]).\]
\end{lemma}
\begin{proof}We explain the proof for $\Sh_{U^c}(M;\sT)$. The statement for $\sT(U)$ then follows from the fiber sequences \eqref{equation: action window sequence for HH}. 

Similar to Lemma \ref{lemma: negative part vanishing}, we take a particular sequence $H_\alpha$ as therein, and then use the wrapping formula \eqref{wrapping sheaf hom formula of CLplus}. Therefore, it is sufficient to prove that, for $\alpha\gg 0$,
\[\Hom(1_{M^2 \times [-a,\infty)},\sHom_\sT(\cK(\varphi^{H_\alpha})|_1,\cK(\id)))=\Hom(\cK(\varphi^{H_\alpha})|_1,\nT_{a*}1^\sT_{\Delta_M})=0.\]

We first see that
\[\begin{split}\Hom(\cK(\varphi^{H_\alpha})|_1,\nT_{a*}1^\sT_{\Delta_M})
&=\Hom(\Delta^{*}_\sT\cK(\varphi^{H_\alpha})|_1,\nT_{a*}1^\sT_{M})\\
&=\Hom( (\Delta^{*}_\sT\cK(\varphi^{H_\alpha})|_1)\otimes_\sT \omega_M^\sT ,\nT_{a*}\omega_M^\sT)\\
&=\Hom( a_{!}^\sT[(\Delta^{*}_\sT\cK(\varphi^{H_\alpha})|_1)\otimes_\sT \omega_M^\sT] ,\nT_{a*}1^\sT).
\end{split}
\]

The first and third isomorphism are adjunctions, and the second follows from that $\omega_M^\sT$ is an invertible sheaf. Then we can apply the argument of Lemma \ref{lemma: negative part vanishing}. 

We first have the following microsupport estimation
\[\dot{ss}(a_{!}^\sT[(\Delta^{*}_\sT\cK(\varphi^{H_\alpha})|_1)\otimes_\sT \omega_M^\sT])/\RR_+ \subset \mathcal{S}(H_\alpha)\footnote{In fact, the object $a_{!}^\sT[(\Delta^{*}_\sT\cK(\varphi^{H_\alpha})|_1)\otimes_\sT \omega_M^\sT]\in \sT$ is the trace of Serre twisting of the functor $\cK(\varphi^{H_\alpha})|_1\circ_\sT$. We will not use this fact, but only the formula here.   }.\]

And then $a_{!}^\sT[(\Delta^{*}_\sT\cK(\varphi^{H_\alpha})|_1)\otimes_\sT \omega_M^\sT]$ restricted to $0$ at $a\ll 0$ ($\sT$ fully faithfully embedded into $\Sh(\RR)$). So, the required vanishing is first proven for $a\ll 0$, and then proven by the microlocal Morse lemma for $a<0$.\end{proof}

We will use the following finiteness result to pass to cohomology when working over a field: 
\begin{proposition}Set $\mathbf{k}=\FF\dMod$. For a compactly supported Hamiltonian function $H$ such that the graph of the Hamiltonian diffeomorphism $\varphi^{H}_1$ is transverse to the diagonal over the support of $H$, then there exists a finite family $\mathcal{I}$ such that 
\[\pi_*^\sT \sHom_\sT(\cK(\varphi^{H})|_1,\cK(\id))=\oplus_{i\in \mathcal{I}}1_{[a_i,b_i)}[-n_i] \in \sT,\]    
\end{proposition}
where for each $i\in \mathcal{I}$, $n_i\in \ZZ$, $a_i\in \RR$ and $b_i\in \RR\cup\{\infty\}$.
\begin{proof}  
By \eqref{equation: slicewise graph, non homogeneous case}, we have the following microsupport estimation
\[\dot{ss}(\pi_*^\sT \sHom_\sT(\cK(\varphi^{H})|_1,\cK(\id))) \subset -\mathcal{S}(H) .\]

On the other hand, $\pi_*^\sT \sHom_\sT(\cK(\varphi^{H})|_1,\cK(\id))$ is constructible since $\cK(\varphi^{H})|_1$ is constructible by Theorem \ref{GKS tamarkin version}. Then the corresponding stratification on $\RR$ of $\pi_*^\sT \sHom_\sT(\cK(\varphi^{H})|_1,\cK(\id))$  is finite as jumps correspond to finitely many (under the transversality assumption) action values of fixed points of $\varphi^{H}_1$. Then the result follows from \cite[Proposition B.12]{Guillermou-Viterbo}. 
\end{proof}
\begin{remark}
    In \cite{Guillermou-Viterbo}, the corresponding result is formulated somewhat differently, by giving the decomposition of $\pi_*^\sT \sHom_\sT(\cK(\varphi^{H})|_1,\cK(\id))$ in $\Sh(\RR)$.  Thus, factors of the form $1_{\RR_{< a}}[1]$ appear. Here, we work in $\sT$, where $1_{\RR_{< a}}[1]\simeq 1_{\RR_{\geq a}}$. 
\end{remark}

\begin{corollary}\label{corollary: wrapping of Hom(F+)}Set $\mathbf{k}=\FF\dMod$. We assume that, in the wrapping formula $Q_U \simeq \varinjlim_{\alpha} \cK(\varphi^{H_\alpha})|_1$, the graph of the Hamiltonian diffeomorphism $\varphi^{H_\alpha}_1$ is transverse to the diagonal over the support of $H_\alpha$. Then, for $-\infty \leq a <b < \infty$, we have \[\Ext^*(1_{M^2\times [-b,-a)},\sHom_\sT(\cK(\varphi^{H_\alpha})|_1,\cK(\id)))\]
is finite dimensional for all $\alpha$.
In particular, we have
\begin{equation}\label{wrapping formula for C out}
     \HH_{\sT}^*(\Sh_{U^c}(M;\sT),(a,b])
     =\varprojlim\nolimits_{\alpha}\Ext^*_{\sT}(1_{M^2\times [-b,-a )},\sHom_\sT(\cK(\varphi^{H_\alpha})|_1,\cK(\id))).
\end{equation} 
\end{corollary}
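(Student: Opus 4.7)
The plan is to combine the finite decomposition established in the preceding Proposition with a Milnor $\varprojlim^1$-vanishing argument, in order to pass cohomology through the inverse limit in \eqref{wrapping sheaf hom formula of CLplus}.

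First I would reduce the Hom on $M^2$ to a Hom in $\sT$ itself. Using the $(\pi^*_\sT, \pi_*^\sT)$ adjunction and the identification $1_{M^2 \times [-b,-a)} = \pi^*_\sT 1_{[-b,-a)}$, one has
\[\Hom_{\sT}(1_{M^2 \times [-b,-a)},\sHom^\star(\cK(\varphi^{H_\alpha})|_1,\cK(\id))) \simeq \Hom_{\sT}(1_{[-b,-a)}, \pi_*^\sT \sHom^\star(\cK(\varphi^{H_\alpha})|_1,\cK(\id))).\]
Under the transversality assumption, the preceding Proposition supplies a \emph{finite} direct sum decomposition
\[\pi_*^\sT \sHom^\star(\cK(\varphi^{H_\alpha})|_1,\cK(\id)) \simeq \bigoplus_{i \in \mathcal{I}_\alpha} 1_{[a_i, b_i)}[-n_i] \in \sT,\]
so the right-hand side of the above adjunction splits into a finite sum of terms of the form $\Hom_\sT(1_{[-b,-a)}, 1_{[a_i, b_i)}[-n_i])$. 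Each of these is computable via Lemma \ref{lemma: hom computation} (or directly from the fiber sequence $1_{\RR_{\geq a_i}} \to 1_{\RR_{\geq b_i}} \to 1_{[a_i,b_i)}$ in $\sT$); its total $\Ext^*$ over $\FF$ is supported in at most two degrees and each graded piece is at most one dimensional. The first claim follows.

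For the second claim, apply $H^*$ to both sides of \eqref{wrapping sheaf hom formula of CLplus} and invoke the Milnor short exact sequence
\[0 \to {\varprojlim_\alpha}^1 \Ext^{*-1}_\sT(\cdots) \to H^* \varprojlim_\alpha \Hom_\sT(\cdots) \to \varprojlim_\alpha \Ext^*_\sT(\cdots) \to 0.\]
By the first claim, each term of the inverse system $\alpha \mapsto \Ext^{*-1}_\sT(\cdots)$ is finite dimensional over $\FF$. For any finite-dimensional target, the descending chain of images of the transition maps must stabilize, so the Mittag--Leffler condition holds automatically, $\varprojlim^1 = 0$, and $H^*$ commutes with the inverse limit. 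This gives the formula in the statement. The main (and essentially only) subtlety lies in this interchange, which is unlocked precisely by the finite dimensionality afforded by transversality; everything else is formal use of the adjunction and of the decomposition furnished by the preceding Proposition.
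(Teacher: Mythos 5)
Your proposal matches the paper's proof: the paper likewise reduces via the $(\pi^*_\sT,\pi_*^\sT)$ adjunction to $\Ext^*_{\sT}(1_{[-b,-a)},\pi_*^\sT\sHom^\star(\cdots))$, gets finite dimensionality from the finite interval-module decomposition of the preceding proposition under the transversality hypothesis, and then passes $H^*$ through the (sequential) inverse limit because inverse systems of finite-dimensional vector spaces satisfy Mittag--Leffler. Your write-up just spells out the $\varprojlim^1$ bookkeeping that the paper leaves implicit; the argument is the same.
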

\begin{proof}
The first statement follows from the non-degenerate assumption of $H_\alpha$ and that
\begin{equation} \label{wrapping formula bounded action window sheaf side}
    \Ext^*(1_{M^2\times [-b,-a )},\sHom_\sT(\cK(\varphi^{H_\alpha})|_1,\cK(\id)))
    =\Ext^*(1_{[-b,-a )},\pi_*^\sT \sHom_\sT(\cK(\varphi^{H_\alpha})|_1,\cK(\id))).
\end{equation}

The second statement follows from the fact that inverse systems of finite dimensional vector spaces satisfy the Mittag-Leffler condition.
\end{proof}

\subsection{Low action Hochschild cohomology}

We work in the embedding $\sT(T^*(M^2)) \hookrightarrow \Sh(M^2\times\RR)$,
recall that $1^\sT_{\Delta_{M}}$ is then identified as $1_{\Delta_{M}\times \RR_{\geq 0}}$. 
We identify $J^1(M^2)$ with $T^*(M^2\times\RR) \cap\{\tau=1\}$. 

For $F_1,F_2 \in \Sh(M\times\RR)$, there exists a natural morphism of bifunctors:
\begin{equation}\label{equation: muhom restriction}\Hom(F_1,F_2) = \Gamma(T^*(M\times \RR), \mhom(F_1 ,F_2) )\rightarrow \Gamma(\dot{T}^*(M\times \RR), \mhom(F_1 ,F_2)).
    \end{equation}
    
We will show below that under certain geometric conditions introduce below, the fiber of this map is $\Hom(F_1,\nT_{-\epsilon*}F_2)$. 

Precisely, consider two Legendrians $\Lambda_1, \Lambda_2 \subset J^1M $ with $\Lambda \cap \nT_\epsilon(\Lambda')=\varnothing$ for small $\epsilon > 0$. For $F_1,F_2 \in \Sh(M\times \RR)$ satisfying  $ss(F_i)\subset \RR_{\geq 0}\Lambda_i$ ($i=1,2$), we have $\supp(\mhom(F_1 ,F_2)) \subset ss(F_1)\cap ss(F_2) \subset \RR_{\geq 0}(\Lambda_1\cap \Lambda_2) $ by \cite[Cor. 6.4.3]{Kashiwara-Schapira-SoM}. Then we have \[\Gamma( \dot{T}^*(M\times \RR), \mhom(F_1 ,F_2))=\Gamma( T^*_{>0}(M\times \RR), \mhom(F_1 ,F_2))=\Gamma( J^1M, \mhom(F_1 ,F_2)),\] where the last equal is true because $T^*_{>0}(M\times \RR) \rightarrow J^1M $ is a trivial $\RR_>$-bundle. Compose with the natural morphism \eqref{equation: muhom restriction}, we have a morphism
\begin{equation}\label{equation: sabloff origin}
   \Hom(F_1,F_2)\rightarrow \Gamma( J^1M, \mhom(F_1 ,F_2)). 
\end{equation}
Then \cite[Theorem 4.3]{Kuo-Li-duality-spherical} proves that, under the assumption $\Lambda_i$ are compact and $\supp(F_1) \cap \supp(F_2)$ is compact in $M\times \RR$, we have the fiber sequence for small enough $\epsilon >0$:
\[
    \Hom(F_1,\nT_{-\epsilon*}F_2)  \rightarrow \Hom(F_1,F_2)\xrightarrow{\eqref{equation: sabloff origin}} \Gamma( J^1M, \mhom(F_1 ,F_2)).
\]

\begin{lemma}\label{lemma: corollary of Sato-Sabloff}
For all compactly support Hamiltonian function $H$ on $T^*M$ and small enough $\epsilon \geq 0$, we have
\[\Hom(\cK(\varphi^{H})|_1,1_{\Delta_{M}\times \RR_{\geq \epsilon}})  \simeq \Gamma( J^1(M^2), \mhom(\cK(\varphi^{H})|_1 ,1_{\Delta_{M}\times \RR_{\geq 0}})).\]
The isomorphism is functorial with respect to $\cK(\varphi^{H})|_1$.
\end{lemma}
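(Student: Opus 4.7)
The plan is to deduce the isomorphism by specializing the Sato--Sabloff fiber sequence of \cite[Corollary 4.4]{Kuo-Li-duality-spherical} to the pair $F \coloneqq \cK(\varphi^H)|_1$ and $G \coloneqq 1_{\Delta_M \times \RR_{\geq 0}}$. First I would check the standing microsupport hypothesis: both sheaves have microsupport contained in the half-space $\{\tau \geq 0\}$. For $G$ this is immediate, while for $F$ it follows from Corollary \ref{GKS tamarkin version} together with the identification $F \simeq F \star 1_{\RR_{\geq 0}}$, which holds because $F$ descends to the Tamarkin category $\sT(T^*(M^2))$.

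Next I would invoke the Sato--Sabloff triangle (see also \cite[Thm. 12.4.7]{Guillermou-book} and \cite[Sec. 4.3]{Ike} for earlier versions) in the form: for $F$, $G$ as above and $\epsilon \geq 0$ sufficiently small, there is a natural fiber sequence
\[
\Hom(F, 1_{\Delta_M \times \RR_{\geq 0}}) \longrightarrow \Hom(F, 1_{\Delta_M \times \RR_{\geq \epsilon}}) \longrightarrow \Gamma(J^1(M^2), \mhom(F, G)),
\]
in which the connecting map identifies microsections on the contact hypersurface $\{\tau = 1\}$ with the jump of $\Hom$-groups under positive translation. The claimed isomorphism of the lemma is then equivalent to showing that the leftmost map in this triangle vanishes (equivalently, that the second map is an isomorphism) for the specified range of $\epsilon$.

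The geometric input that makes the ``small enough $\epsilon$'' precise is compactness of $\supp H$: by \cite[Sec. 4]{Guillermou-Viterbo}, $\cK(\varphi^H)|_1$ agrees with $1_{\Delta_M \times \RR_{\geq 0}}$ outside a compact subset of $M^2 \times \RR$. Hence the Reeb chords between the Legendrian lifts (in $J^1(M^2)$) of the graph of $\widehat{\varphi}^H$ and the diagonal form a compact set with a strictly positive minimum action, and any $\epsilon$ strictly below that minimum gives the desired vanishing via a standard microlocal Morse argument applied to the action coordinate.

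I expect the main obstacle to be articulating the correct form of the Sato--Sabloff triangle in our $\mathbf{k}$-linear, non-constructible setting and verifying that its connecting map coincides with the natural microlocalization map implicit in the statement of the lemma. Once that is in place, functoriality in $\cK(\varphi^H)|_1$ is automatic from the naturality of the Sato--Sabloff fiber sequence in both of its arguments.
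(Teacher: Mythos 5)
Your overall strategy---apply the Kuo--Li Sato--Sabloff fiber sequence to $F=\cK(\varphi^H)|_1$ and $G=1_{\Delta_M\times\RR_{\geq 0}}$, handle the lack of compactness via the observation from \cite[Section 4]{Guillermou-Viterbo} that $F$ agrees with $G$ outside a compact set, and then kill one term of the triangle---is exactly the paper's. But the triangle you write down is not the right one, and this breaks the argument. The sequence the paper uses is
\[
\Hom(F,1_{\Delta_M\times\RR_{\geq -\epsilon}})\;\to\;\Hom(F,1_{\Delta_M\times\RR_{\geq \epsilon}})\;\to\;\Gamma\bigl(J^1(M^2),\mhom(F,G)\bigr),
\]
for $0\le\epsilon<c(\hatLam_{\varphi_1^H},\hatLam_{\id})/2$, and the whole content of the proof is that the first \emph{object} $\Hom(F,1_{\Delta_M\times\RR_{\geq -\epsilon}})$ vanishes for $\epsilon>0$ (the $\epsilon=0$ case is quoted separately from \cite[Proposition 4.3]{Kuo-Li-duality-spherical}). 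In your version the first slot is $\Hom(F,1_{\Delta_M\times\RR_{\geq 0}})$, which is the $\epsilon=0$ instance of the very isomorphism being proved; by Theorem \ref{proposition: computation of HH at level 0} it computes Borel--Moore chains of $U^c$ and is certainly nonzero, so the vanishing you need is false. Indeed, granting the lemma, both $\Hom(F,1_{\Delta_M\times\RR_{\geq 0}})$ and $\Hom(F,1_{\Delta_M\times\RR_{\geq \epsilon}})$ are identified with $\Gamma(\mhom(F,G))$ and the comparison map is an equivalence, so the cofiber of your first map is $0$, not $\Gamma(\mhom(F,G))$: your triangle only sees chords of length in $(0,\epsilon]$.

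Two further points. First, ``the leftmost map vanishes'' is not equivalent to ``the second map is an isomorphism'': in a stable fiber sequence $A\to B\to C$ the map $B\to C$ is an equivalence iff $A\simeq 0$, not iff $A\to B$ is null. Second, the vanishing of the negatively translated Hom is not a consequence of the minimal Reeb chord length being positive; it combines an action-spectrum bound with triviality of the relevant object as $t\to-\infty$, and the paper obtains it from Lemma \ref{lemma: negative part vanishing} (which uses the RCT hypothesis and the chosen cofinal sequence, not merely compact support of $H$), plus Remark \ref{RCT estimation for HH}. Your identification of the cut-off issue and the role of \cite[Section 4]{Guillermou-Viterbo} is on target---that is precisely how the paper (following Ike) makes Sato--Sabloff applicable in this non-compactly-supported setting---but it is deployed here toward the wrong vanishing statement.
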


\begin{proof}[Proof of Lemma \ref{lemma: corollary of Sato-Sabloff}]
We set $F_1=\cK(\varphi^{H})|_1$ and $F_2=1_{\Delta_{M}\times \RR_{\geq 0}}$. 

First of all, by microsupport estimation in the proof of Lemma \ref{lemma: cohomology RCT vanishing}, we know that, for small enough $\epsilon >0$,
\[\Hom(F_1,\nT_{-\epsilon*}F_2)=0,\quad \Hom(F_1,\nT_{\epsilon*}F_2)\xrightarrow{\simeq} \Hom(F_1,F_2).\]

Equations \eqref{equation: slicewise graph, non homogeneous case} show that $ss(F_i)\subset \{\tau\geq 0\}$. Then it remains to check the natural morphism 
\[\Hom(F_1,F_2)  \rightarrow\
\Gamma( J^1(M^2), \mhom(F_1,F_2))\]
has the trivial fiber.

We are not able to use the result of \cite[Theorem 4.3]{Kuo-Li-duality-spherical} directly because of the absence of required compactness. Here, we make a reduction to apply \cite[Theorem 4.3]{Kuo-Li-duality-spherical} by employing a trick comes from \cite[Proposition 4.10]{Ike}.

As it is explained in \cite[Section 4]{Guillermou-Viterbo}, for compactly supported functions $H$, $\cK(\varphi^{H})|_{1}$ is isomorphic to $1_{\Delta_M\times \RR_{\geq 0}}$ outside a compact set of $M^2\times \RR$. Therefore, there exists a relatively compact open set $U \subset M^2$ and $ A \in (1,\infty)$ such that $(F_1)_{M^2 \times [A,\infty)} \simeq 1_{\Delta_M\times \RR_{\geq A}}$. We set $G_1=(F_1)_{M^2 \times (-\infty,A)}$. It has a compact support. And we have the fiber sequence by excision sequence
\[G_1 \rightarrow F_1 \rightarrow 1_{\Delta_M\times \RR_{\geq A}}.\]

We now compute fibers of \eqref{equation: sabloff origin} for $1_{\Delta_M\times \RR_{\geq A}}$ or $G_1$ in the place of $F_1$.

For $1_{\Delta_M\times \RR_{\geq A}}$, we do this by a direct computation, which shows that the fiber sequence
\[\Hom(1_{\Delta_M\times \RR_{\geq A}},\nT_{-\epsilon*}F_2)  \rightarrow \Hom(1_{\Delta_M\times \RR_{\geq A}},F_2) \rightarrow \Gamma( J^1(M^2), \mhom(1_{\Delta_M\times \RR_{\geq A}},F_2)) \]
is exactly given by
\[\Gamma(M^2,1) \simeq \Gamma(M^2,1) \rightarrow 0.\]

For $G_1$, we adopt the proof of \cite[Theorem 4.3, Propositions 4.6]{Kuo-Li-duality-spherical}. We first notice that now $G_1$ has a compact support, so $\supp(G_1)\cap \supp(F_2)$ is compact. Then we aware of that, in the proof of \textit{loc. cit.}, compactness of microsupports guarantee correctness of certain microsupport estimations, which could be achieved for $G_1$ and $F_2$ because we have good control of their microsupport at infinity (they are both conormal of diagonal). Then the argument in \cite[Theorem 4.3]{Kuo-Li-duality-spherical} works for $G_1,F_2$ to obtain the fiber sequence for small $\epsilon>0$:  
\[ \Hom(G_1,\nT_{-\epsilon*}F_2)  \rightarrow \Hom(G_1,F_2) \rightarrow \Gamma( J^1(M^2), \mhom(G_1 ,F_2)) .\]

Lastly, we apply the morphism of functor $\Hom(-,F_2) \rightarrow \Gamma( J^1(M^2), \mhom(- ,F_2))$ to the fiber sequence $G_1 \rightarrow F_1 \rightarrow 1_{\Delta_M\times \RR_{\geq A}}$ to see that the fiber of  
\[  \Hom(F_1,F_2) \rightarrow \Gamma( J^1(M^2), \mhom(F_1 ,F_2))\]
is given by the fiber of the morphism
\[\Hom(G_1,\nT_{-\epsilon*}F_2) \rightarrow \Hom(1_{\Delta_M\times \RR_{\geq A}}[-1],\nT_{-\epsilon*}F_2)\]
because limits are commute. We finish the proof by notice that the fiber of the last morphism is $\Hom(F_1,\nT_{-\epsilon*}F_2)=0$.\end{proof}

\begin{remark}
    A different proof of Lemma \ref{lemma: corollary of Sato-Sabloff} can be obtained by modifying the proof of \cite[Theorem 12.4.7]{Guillermou-book}. 
\end{remark}

\begin{corollary}\label{corollary: corollary of corollary of Sato-Sabloff}For $U \subset T^*M$ has a RCT boundary, and sufficiently small  $\epsilon \geq 0$, we have the commutative diagram:
\[\begin{tikzcd}
{\Hom(Q_U,1_{\Delta_{M}\times \RR_{\geq \epsilon}}) } \arrow[d] \arrow[r, "\simeq"] & { \varprojlim_{\alpha}\Gamma(J^1(M^2), \mhom(\cK(\varphi_{H_\alpha})|_1 ,1_{\Delta_{M}\times \RR_{\geq 0}}))} \arrow[d] \\
{\Hom(1_{\Delta_{M}\times \RR_{\geq 0}},1_{\Delta_{M}\times \RR_{\geq \epsilon}}) } \arrow[r, "\simeq "]                                     & {\Gamma(J^1(M^2), \mhom(1_{\Delta_{M}\times \RR_{\geq 0}} ,1_{\Delta_{M}\times \RR_{\geq 0}}))}  .                               
\end{tikzcd}\]
\end{corollary}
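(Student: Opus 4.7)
The plan is to deduce the commutative square by combining the wrapping formula (Corollary \ref{wrapping formula for projector}) with the pointwise statement of Lemma \ref{lemma: corollary of Sato-Sabloff} applied naturally in $\alpha$.

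First, I would recall that Corollary \ref{wrapping formula for projector} gives $Q_U \simeq \varinjlim_{\alpha} \cK(\varphi^{H_\alpha})|_1$, and that the canonical unit map $1_{\Delta_M \times \RR_{\geq 0}} = \cK(\id) \to Q_U$ is the colimit of the continuation maps $\cK(\id) \to \cK(\varphi^{H_\alpha})|_1$ coming from positivity of $H_\alpha$. Applying $\Hom(-, 1_{\Delta_M \times \RR_{\geq \epsilon}})$ converts the colimit on the source into a limit, so
\[\Hom(Q_U, 1_{\Delta_M \times \RR_{\geq \epsilon}}) \simeq \varprojlim_\alpha \Hom(\cK(\varphi^{H_\alpha})|_1, 1_{\Delta_M \times \RR_{\geq \epsilon}}).\]
Lemma \ref{lemma: corollary of Sato-Sabloff} identifies each term of this limit with $\Gamma(J^1(M^2), \mhom(\cK(\varphi^{H_\alpha})|_1, 1_{\Delta_M \times \RR_{\geq 0}}))$, and passing to the inverse limit over $\alpha$ yields the top horizontal isomorphism.

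The bottom horizontal isomorphism is just Lemma \ref{lemma: corollary of Sato-Sabloff} applied to the trivial Hamiltonian $H = 0$, so that $\varphi^H = \id$ and $\cK(\id) = 1_{\Delta_M \times \RR_{\geq 0}}$; note this case also satisfies the RCT-type hypothesis trivially since the minimal period estimate is inherited from the nondegenerate $H_\alpha$.

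For commutativity of the square, I would invoke the explicit functoriality clause of Lemma \ref{lemma: corollary of Sato-Sabloff}: the Sato--Sabloff identification is natural in the first argument. Applied to the continuation morphisms $\cK(\id) \to \cK(\varphi^{H_\alpha})|_1$, it gives for each $\alpha$ a commutative square comparing $\Hom(\cK(\varphi^{H_\alpha})|_1, 1_{\Delta_M \times \RR_{\geq \epsilon}}) \to \Hom(\cK(\id), 1_{\Delta_M \times \RR_{\geq \epsilon}})$ with the corresponding map of $\mhom$ global sections. Taking the limit over $\alpha$ and using that the left vertical map of the desired diagram is by definition induced by the inverse limit of these continuation morphisms, the commutativity follows. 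The main mild subtlety is checking that the isomorphism in Lemma \ref{lemma: corollary of Sato-Sabloff} is truly natural in the sheaf $F = \cK(\varphi^{H_\alpha})|_1$ as opposed to just existing pointwise; but this is clear from its construction via the Sato--Sabloff fiber sequence of \cite{Kuo-Li-duality-spherical} combined with the vanishing in Lemma \ref{lemma: negative part vanishing}, both of which are natural in $F$, so no additional argument is required.
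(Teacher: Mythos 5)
Your proof is essentially identical to the paper's, just with the one-line argument (``apply Lemma \ref{lemma: corollary of Sato-Sabloff} to the morphism $1_{\Delta_M \times \RR_{\geq 0}} \to Q_U \simeq \varinjlim_\alpha \cK(\varphi^{H_\alpha})|_1$'') unpacked into its constituent steps: rewriting $Q_U$ via the wrapping formula, passing $\Hom$ through the colimit, applying the lemma termwise including the degenerate case $H=0$ for the bottom row, and invoking the stated functoriality of the Sato--Sabloff isomorphism for commutativity. The reasoning is correct and faithfully reconstructs the intended argument.
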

\begin{proof}We apply Lemma \ref{lemma: corollary of Sato-Sabloff} to the morphism $1_{\Delta_{M}\times \RR_{\geq 0}} \rightarrow Q_U \cong \varinjlim_{\alpha}\cK(\varphi^{H_\alpha})|_1$.
\end{proof}
Next, we compute $\mhom$ involved. For a  compactly supported Hamiltonian function $H$, recall the ``Legendrian graph" $\widehat{\Lambda}_{\varphi^H_1}$ defined in Equation \eqref{equation: slicewise graph, non homogeneous case}. We set $Z_H=\widehat{\Lambda}_{\id} \cap \widehat{\Lambda}_{\varphi^H_1} \subset J^1(M^2)$, and $i_H: Z_H \rightarrow J^1(M^2)$ be the closed inclusion. By definition $(q,-p,q,p,0)\in Z_H$ if and only if $(q,p)$ is a constant $1$-periodic closed orbit of the flow $\varphi_z^H$.
\begin{lemma}\label{lemma: muhom computation}For a compactly supported Hamiltonian function $H$ and assume $Z_H$ is a (codimension 0) submanifold with boundary in $\Delta_{T^*M}\times \{0\}$, then we have
\[\mhom(\cK(\varphi^{H})|_1 ,1_{\Delta_{M}\times \RR_{\geq 0}}) |_{J^1(M^2)}\simeq i_{H*}\omega_{{Z_H}}[-2n].\] 
\end{lemma}
\begin{proof}

By the condition on $Z_H$, we have that Legendrian submanifolds $\widehat{\Lambda}_{\varphi^H_1}$ and $\widehat{\Lambda}_{\id}$ (c.f. Equation \eqref{equation: slicewise graph, non homogeneous case} ) intersect at $Z_H$ cleanly. By \cite[Lemma 6.14]{Guillermou-quantization-cotangent}, we then see that $\mhom(\cK(\varphi^{H})|_1 ,1_{\Delta_{M}\times \RR_{\geq 0}}) |_{J^1(M^2)}$ is a locally constant sheaf supported on $Z_H$. 

We first assume $H$ to be nonnegative. Then we have a morphism $1_{\Delta_{M}\times \RR_{\geq 0}} \rightarrow \cK(\varphi^{H})|_1 $ by Corollary \ref{GKS tamarkin version} and the morphism
\begin{equation}\label{equation: mu-hom monotonic morphism}
    \mhom(\cK(\varphi^{H})|_1 ,1_{\Delta_{M}\times \RR_{\geq 0}}) |_{J^1(M^2)}  \rightarrow  \mhom(1_{\Delta_{M}\times \RR_{\geq 0}} ,1_{\Delta_{M}\times \RR_{\geq 0}}) |_{J^1(M^2)}.
\end{equation}

By \cite[Theorem 4.14, Proposition 4.15]{Ike}, we have the following direct computation
\[\mhom(1_{\Delta_{M}\times \RR_{\geq 0}} ,1_{\Delta_{M}\times \RR_{\geq 0}}) |_{J^1(M^2)}=\omega_{\Delta_{T^*M}\times \{0\}}[-2n].\]

Now, since $\mhom(\cK(\varphi^{H})|_1 ,1_{\Delta_{M}\times \RR_{\geq 0}}) |_{J^1(M^2)}$ is a locally constant sheaf supported on $Z_H$. We will conclude by showing that \eqref{equation: mu-hom monotonic morphism} restricts to an equivalence on $Z_H$. 

To do so, we only need to check the equivalence stalk-wise. Here, we notice that as $\cK(\varphi^{H})|_z$ has (conic) Lagrangian microsupport inside an open neighberhood of $\RR_{\geq 0}Z_H$. Then, by \cite[Equation I.4.6]{Guillermou-book} for example, we can compute stalks of $\mhom(\cK(\varphi^{H})|_z ,1_{\Delta_{M}\times \RR_{\geq 0}})$ (for all $z\in\RR$) using their microstalks. Moreover, stalk of \eqref{equation: mu-hom monotonic morphism} at $(q,-p,q,p,0)\in Z_H$ is induced by the microstalk of the morphism $1_{\Delta_{M}\times \RR_{\geq 0}} \rightarrow \cK(\varphi^{H})|_1 $ at $(q,-p,q,p,0)\in Z_H$, which we are going to prove to be the identity. 

It is easy to see that the GKS quantization $\cK(\varphi^{H})$ is simple, i.e. microlocally rank 1, for all Hamiltonians $H$ (or see \cite[Theorem II.1.1, Remark II.1.2]{Guillermou-book}).  It shows that all microstalks at $(z,\zeta, q,-p,q,p,t)$ in \eqref{equatio: GKS T-linear total graph}, i.e.,
 \[\dot{ss}(\cK(\varphi))/\RR_+\subseteq \{(z, - H(z,{\varphi}_{z}(q,p))  ,(q,-p),{\varphi}_{z}(q,p),-S_{H}^z(q,p) ) : z\in I, (q,p)\in T^*M\}\]
are $1$ for all $z\in \RR$ upto a suitable degree shifting. 

It is also shown in \textit{loc. cit. }that the microstalks of $\cK(\varphi)|_z$ at $( q,-p,q,p,t) \in Z_H$ are naturally isomorphic to microstalks at $(z,\zeta, q,-p,q,p,t)$ of $\cK(\varphi)$. In the $H$ positive case, the isomorphisms between microstalks are induced by the morphism $1_{\Delta_{M}\times \RR_{\geq 0}} \rightarrow \cK(\varphi^{H})|_z $. Therefore, it remains to see that the microstalks of $\cK(\varphi)$ at $\dot{ss}(\cK(\varphi))/\RR_+ \cap T^*\RR_z\times Z_H$ are naturally identical. To see this, we observe that $\dot{ss}(\cK(\varphi))/\RR_+\cap T^*\RR_z\times Z_H$ is in a connected component of $\dot{ss}(\cK(\varphi))/\RR_+$. Then we can conclude by \cite[Corollary 7.5.7]{Kashiwara-Schapira-SoM}.\end{proof}

Now we proof the main theorem of this subsection.
\begin{theorem}\label{proposition: computation of HH at level 0}If $U\subseteq T^*M$ has a RCT boundary, then for $\epsilon \geq 0$ smaller than the minimal period of $\partial U$, we have
\[\HH^\bullet_{\sT}(\sT(U);(-\infty,\epsilon])\simeq \Gamma^{BM}_{2n-\bullet}(U,\mathbf{k}), \quad \HH^\bullet_{\sT}(\Sh_{U^c}(M;\sT);(-\infty,\epsilon])\simeq \Gamma^{BM}_{2n-\bullet}(U^c,\mathbf{k}).\]
\end{theorem}
\begin{proof}
As $U$ has contact boundary, we take a cofinal sequence $H_\alpha$ as in Subsection \ref{section: RCT estimation}. 

By construction of $H_\alpha$, the condition of Lemma \ref{lemma: muhom computation} is satisfied. Then we have 
\[{\Gamma(J^1(M^2), \mhom(\cK(\varphi_{H_\alpha})|_1 ,1_{\Delta_{M}\times \RR_{\geq 0}}))} \simeq  \Gamma_{Z_{H^\alpha }} (J^1  (M^2),\mathbf{k})[-2n].\]

So, since $\cap_{\alpha} Z_{H^\alpha }= U^c\times \{0\}$ by our construction of $H_\alpha$, we have
\[
\begin{split}
   &\varprojlim\nolimits_{\alpha}{\Gamma(J^1(M^2), \mhom(\cK(\varphi_{H_\alpha})|_1 ,1_{\Delta_{M}\times \RR_{\geq 0}}))} 
    \\\simeq &\varprojlim\nolimits_{\alpha} \Gamma_{Z_{H^\alpha}}(J^1(M^2),\mathbf{k})[-2n]
    \simeq \Gamma_{U^c\times \{0\}}(J^1(M^2),\mathbf{k})[-2n] 
    \simeq \Gamma^{BM}_{2n-\bullet}(U^c,\mathbf{k}). 
\end{split}
\]

Therefore, the diagram appears in Corollary \ref{corollary: corollary of corollary of Sato-Sabloff} is given by
\[
\begin{tikzcd}
{\Hom(Q_U,1_{\Delta_{M}\times \RR_{\geq \epsilon}})  } \arrow[d] \arrow[r, "\simeq"]                    & \Gamma^{BM}_{2n-\bullet}(U^c,\mathbf{k}) \arrow[d] \\
{\Hom(1_{\Delta_{M}\times \RR_{\geq 0}},1_{\Delta_{M}\times \RR_{\geq \epsilon}}) } \arrow[r, "\simeq"] & \Gamma^{BM}_{2n-\bullet}(T^*M,\mathbf{k}).       
\end{tikzcd}
\]

The result now follows from Proposition \ref{proposition: HH co using sheaf} and the fiber sequence  \eqref{equation: HH fiber sequences}.\end{proof}

By Corollary \ref{proposition: tw HH co using sheaf}, we have
\begin{corollary}\label{corollary: HH cohomology with oriention}Under the same condition of Proposition \ref{proposition: computation of HH at level 0}, we assume further that $M$ is $\mathbf{k}$-orientable, then we have
\[C_\epsilon(U,\mathbf{k})\simeq \Gamma^{BM}_{2n-*}(U,\mathbf{k}),\quad C^{out}_\epsilon(U,\mathbf{k})\simeq \Gamma^{BM}_{2n-*}(U^c,\mathbf{k}).\]    
\end{corollary}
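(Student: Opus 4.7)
The plan is to simply combine the two results immediately preceding the corollary. Concretely, Corollary \ref{proposition: tw HH co using sheaf} shows that when $M$ is $\mathbf{k}$-orientable, the Calabi--Yau property of $\sT(U)$ and of $\Sh_{U^c}(M;\sT)$ (Theorem \ref{right calabi yau}) collapses the twist by the Serre functor, yielding
\[C_L(U,\mathbf{k})\simeq \HH^\bullet_{\mathbf{k}}(\sT(U),\nT_{L*}), \qquad C^{out}_L(U,\mathbf{k})\simeq \HH^\bullet_{\mathbf{k}}(\Sh_{U^c}(M;\sT),\nT_{L*}),\]
for every $L$. I would specialize both identifications to $L=\epsilon$, under the hypothesis that $\epsilon\ge 0$ is strictly smaller than the minimal period of $\partial U$.

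The second ingredient is Theorem \ref{proposition: computation of HH at level 0}, which, under exactly the RCT hypothesis on $\partial U$ and the smallness of $\epsilon$, computes
\[\HH^\bullet_{\mathbf{k}}(\sT(U);\nT_{\epsilon*})\simeq \Gamma^{BM}_{2n-*}(U,\mathbf{k}),\qquad \HH^\bullet_{\mathbf{k}}(\Sh_{U^c}(M;\sT);\nT_{\epsilon*})\simeq \Gamma^{BM}_{2n-*}(U^c,\mathbf{k}).\]
Composing the two chains of isomorphisms gives exactly the two displayed equivalences of the corollary. The only thing to check is that the orientability hypothesis used to invoke Corollary \ref{proposition: tw HH co using sheaf} is compatible with the setting of Theorem \ref{proposition: computation of HH at level 0}; but the former only adds the assumption of $\mathbf{k}$-orientability of $M$, which is precisely the extra hypothesis of our corollary, so there is nothing to reconcile.

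There is no real obstacle: the substantive work has already been done in Theorem \ref{proposition: computation of HH at level 0} (via the Sato--Sabloff fiber sequence and the $\mu hom$ computation of Lemma \ref{lemma: muhom computation}) and in Theorem \ref{right calabi yau} (the right Calabi--Yau property). The corollary is strictly a packaging statement: a two-line proof that simply records the composition of these two identifications, written symmetrically for $U$ and $U^c$.
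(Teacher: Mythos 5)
Your proposal is correct and is exactly the paper's argument: the corollary is obtained by specializing Corollary \ref{proposition: tw HH co using sheaf} (and its $Q_U$/$C_L^{out}$ analogue noted in the remark following Proposition \ref{proposition: HH co using sheaf}) to $L=\epsilon$ and composing with the computation of Theorem \ref{proposition: computation of HH at level 0}. Nothing further is needed.
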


\begin{remark}If we assume $U$ is $\mathbf{k}$-orientable, which is automatically true if $\mathbf{k}=R\dMod$ for a complex oriented commutative ring spectrum $R$ (for example discrete rings or the complex cobordism $\operatorname{MU}$), we have $\Gamma^{BM}_{2n-\bullet}(U,R)\simeq \Gamma( \overline{U},\partial U,R)$ by Poincar\'e duality.    
\end{remark}

\section{Comparison to  symplectic cohomology} \label{sec: guillermou-viterbo}

In this section, we take $\mathbf{k}=\FF\dMod$ for a field $\FF$. Recall that $T(T^*M)\simeq p^*TM\oplus p^*T^*M$, where $p:T^*M\rightarrow M$ is the projection. Then we have $c_1(T(T^*M))=0$, $w_2(T(T^*M))=0$. For the open set $U \subseteq T^*M$, we assume that $\overline{U}$ is a closed manifold with boundary, and the restriction of the canonical 1-form to $\partial U$ is a contact form. In particular, $\overline{U}$ is a Liouville domain. In this case we say $\partial U$ is {\em restricted contact type (RCT)}. 

For any open interval $(a,b) \subseteq \RR$ with $a,b \notin \mathcal{A}(\partial U)$,  there is a graded vector space $SH_{(a,b)}^{*}(\overline{U})$.

We want to prove the following theorem. 

\begin{theorem}\label{theorem: CT=SH}For an open set $U\subseteq T^*M$ with RCT boundary. If $L\in \RR \setminus \mathcal{A}(\partial U)$, we have an isomorphism
\[\HH_\sT^*(\sT(U);(-\infty,L])\simeq SH_{(-\infty,L)}^{*}(\overline{U}).\]    
\end{theorem}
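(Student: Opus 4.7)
My plan is to reduce both sides to a common cofinal limit indexed by wrapping Hamiltonians and invoke the comparison in \cite[App.~E]{Guillermou-Viterbo}, which matches sheaf-theoretic Ext groups of GKS sheaf quantizations with truncated Hamiltonian Floer cohomology.

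First I truncate. Pick $a \in (-\infty, 0) \setminus \mathcal{A}(\partial U)$. Lemma~\ref{lemma: negative part vanishing} together with Remark~\ref{RCT estimation for HH} and the action-window fiber sequences give $\HH_\FF^*(\sT(U); \nT_{L*}) \simeq \HH_\FF^*(\sT(U), (a, L])$, and similarly for $\Sh_{U^c}(M;\sT)$. On the Floer side, since $\mathcal{A}(\partial U) \subseteq \RR_{\geq 0}$, continuation invariance of filtered symplectic cohomology yields $SH_{(-\infty, L)}^*(\overline U) \simeq SH_{(a, L)}^*(\overline U)$. So it suffices to prove $\HH_\FF^*(\sT(U), (a, L]) \simeq SH_{(a, L)}^*(\overline U)$.

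Next I fix a cofinal wrapping sequence $H_\alpha$ of compactly supported non-degenerate Hamiltonians on $U$ as in Subsection~\ref{section: RCT estimation}. By Corollary~\ref{wrapping formula for projector}, $Q_U = \varinjlim_\alpha \cK(\varphi^{H_\alpha})|_1$, so the fiber sequence $P_U \to 1_{\Delta_{T^*M}} \to Q_U$, combined with Proposition~\ref{proposition: HH co using sheaf} and Corollary~\ref{corollary: wrapping of Hom(F+)} (for Mittag--Leffler), produces on cohomology the fiber sequence
\[\varprojlim_\alpha \Ext_\sT^*\bigl(1_{M^2 \times [-L, -a)},\, \sHom^\star(\cK(\varphi^{H_\alpha})|_1,\, \cK(\id))\bigr) \to H^*(M;\FF) \to H^*\HH_\FF^\bullet(\sT(U), (a, L]).\]
Now \cite[App.~E]{Guillermou-Viterbo} identifies each term in the inverse system with the truncated Floer cohomology $HF_{(a, L)}^*(H_\alpha)$, functorially in the monotonicity maps induced by $H_\alpha \leq H_\beta$. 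For $\alpha$ sufficiently large, the 1-periodic orbits of $H_\alpha$ with action in $(a, L)$ stabilize---they parametrize constant orbits in $\overline U$ together with non-constant Reeb orbits on $\partial U$ of period less than $L$---and the continuation maps become isomorphisms. Hence the inverse system is eventually constant with value $SH_{(a, L)}^*(\overline U)$, and matching with the Viterbo-type fiber sequence $H^*(M;\FF) \to SH_{(a, L)}^*(\overline U) \to \cdots$ on the Floer side finishes the argument.

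The principal obstacle is the term-by-term identification invoking \cite[App.~E]{Guillermou-Viterbo}: that reference is formulated in the language of generating functions (as used in Theorem~\ref{generating function comparison}), so one must verify that its output coincides with the $\sT$-linear $\sHom^\star$ pairing of Proposition~\ref{proposition: HH co using sheaf} and that its functoriality in $H$ matches Floer continuation on one side and the maps $\cK(\varphi^{H_\alpha})|_1 \to \cK(\varphi^{H_\beta})|_1$ from Corollary~\ref{GKS tamarkin version} on the other. Tracking the $[-n]$ shift, the Calabi--Yau structure of Theorem~\ref{right calabi yau}, and the coefficient identification in Corollary~\ref{proposition: tw HH co using sheaf} is also required. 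Once these compatibilities are in hand, the argument is essentially a formal reorganization of the previous sections together with the wrapping formula.
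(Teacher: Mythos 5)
Your overall architecture (truncate the action window, apply the wrapping formula for $Q_U$, identify each term of the resulting inverse system via \cite[App.~E]{Guillermou-Viterbo}, and match fiber sequences) is the same as the paper's, but the final step contains a genuine gap. The inverse limit $\varprojlim_\alpha HF^*(\hatLam_{\varphi_1^{H_\alpha}},\hatLam_{\id};[-L,-\epsilon))$ over \emph{compactly supported} Hamiltonians computes $\widetilde{SH}^{*}_{(\epsilon,L]}(\overline U)$, not $SH^{*}_{(\epsilon,L]}(\overline U)$: these differ, and the precise relation is the degree-shifted isomorphism $\widetilde{SH}^{*+1}_{(\epsilon,L]}(\overline U)\simeq SH^{*}_{(\epsilon,L]}(\overline U)$ of Proposition \ref{proposition: Cieliebak-Oancea}, which is a nontrivial Floer-theoretic statement (proved by an acyclicity argument for a V-shaped Hamiltonian, following \cite[Prop.~2.5]{Cieliebak-Frauenfelder-Oancea2010}). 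Your assertion that the inverse system is ``eventually constant with value $SH^*_{(a,L)}(\overline U)$'' conflates the two invariants and drops this shift. In the paper the shift is absorbed because one works with $\HH^\bullet(\Sh_{U^c}(M;\sT),(\epsilon,L])$ and passes to $\HH^\bullet(\sT(U),(\epsilon,L])$ through the connecting map of the fiber sequence (whose middle term vanishes on the window $(\epsilon,L]$), producing a $[+1]$ that exactly cancels the shift from Proposition \ref{proposition: Cieliebak-Oancea}. Without both ingredients your identification of the third term of your fiber sequence is off by one degree and unproved.

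A second, related problem: you apply the term-by-term comparison directly to the window $(a,L]$ with $a<0<L$. But the identification of Lagrangian with Hamiltonian Floer cohomology (Proposition \ref{proposition: LHF=HHF}) requires $0\notin[a,b)$, precisely because the constant orbits outside $\mathrm{supp}(H_\alpha)$ are degenerate (the Lagrangians $\hatLam_{\varphi_1^{H_\alpha}}$ and $\hatLam_{\id}$ agree along a clean, noncompact locus of action $0$). This is why the paper splits the window at $0$: the small window $(-\delta,\epsilon]$ is computed separately by the Sato--Sabloff/$\mhom$ argument (Theorem \ref{proposition: computation of HH at level 0}), giving $H^*(\overline U,\partial U;\FF)$, and only the positive window $(\epsilon,L]$ is fed into the Guillermou--Viterbo comparison. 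To repair your argument you would either need to reproduce this splitting (i.e.\ the $3\times 3$ diagram of Lemma \ref{lemma: 9-diagram}) or justify the clean-intersection Floer theory across action $0$ directly (e.g.\ via Po\'zniak's theorem, as the paper's closing remark suggests).
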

\begin{remark}
 When $\partial U$ has good dynamics in the sense that all Reeb orbits are non-degenerated, the action spectrum $\mathcal{A}(\partial U)$ is discrete. Then, the theorem is still true for $L\in \mathcal{A}(\partial U)$ by passing to limits. 
\end{remark}

The tool we ultimately use to connect Floer homology with sheaves is   a comparison result of Guillermou and Viterbo.  Recall that for a Hamiltonian $H$, we write $\widehat{\Lambda}_{\phi_1^H}$ for the Legendrian graph of the time-1 symplectomorphism, see \eqref{equation: slicewise graph, non homogeneous case}.  Now we view this instead as an exact Lagrangian with fixed primitive (above denoted $S_H^1$).  Recall also that the Floer homology between such exact Lagrangians carries an action filtration. 

\begin{theorem}[{\cite[Theorem E.1]{Guillermou-Viterbo}}]\label{theorem: GV-comparision with HF}For a compactly supported Hamiltonian function $H$ and $a,b\in \RR$, denote $\varphi^H$ the Hamiltonian flow of $H$, we have
\begin{equation}\label{guillermou-viterbo formula}
\Ext^*(1_{M^2\times [a,b)},\sHom_\sT(\cK(\varphi^H)|_1,\cK(\id)))\simeq HF^*(\hatLam_{\varphi_1^H},\hatLam_{\id};[a,b)). 
\end{equation}
\end{theorem}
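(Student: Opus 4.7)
The plan is to reduce both sides of the claimed isomorphism to the filtered cohomology of sublevel sets of a generating function quadratic at infinity (GFQI) associated to $\varphi^H$, and to check that both reductions preserve action filtrations.

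The first step is to fix a GFQI $S \colon \widetilde M \times \RR^N \to \RR$ for $\varphi^H_1$; existence for compactly supported Hamiltonians on $T^*M$ is the Sikorav-Laudenbach-Chaperon theorem, and one can in fact obtain a one-parameter family $S_z$ covering the whole isotopy $\varphi^H_z$. Here $\widetilde M$ denotes the appropriate (possibly one-point compactified) target on which the GFQI lives. Let $F_S \in \sT(T^* \widetilde M)$ be the Tamarkin sheaf $\underline{p}_! 1_{\{S \leq t\}}$ built from $S$. Next, I would invoke the $\sT$-linear equivalence of \cite[Section 5.2]{Guillermou-Viterbo}, already used in the proof of Theorem \ref{generating function comparison}: it identifies a full $\sT$-linear subcategory of $\sT(T^*(M \times M))$ containing $\cK(\varphi^H)|_1$ with a full $\sT$-linear subcategory of $\sT(T^* \widetilde M)$ containing $F_S|_1$, sending $\cK(\varphi^H)|_1 \mapsto F_S|_1$ and $\cK(\id) \mapsto 1_{\widetilde M \times \RR_{\geq 0}}$.

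Applying this equivalence to the left hand side converts it into $\Ext_\sT^*(1_{\widetilde M \times [a,b)}, \sHom^\star(F_S|_1, 1_{\widetilde M \times \RR_{\geq 0}}))$; using adjunction together with the defining formula $F_S|_1 = \underline{p}_! 1_{\{S \leq t\}}$, this Ext group is canonically isomorphic to the shifted relative cohomology $H^{* - \iota}(\{S \leq b\}, \{S \leq a\}; \FF)$, where $\iota$ is the Morse index at infinity of the stabilizing quadratic form. For the right hand side, I would invoke the classical theorem of Viterbo (with refinements by Milinkovi\'c-Oh and Traynor) identifying filtered Lagrangian Floer cohomology $HF^*(\hatLam_{\varphi_1^H}, \hatLam_{\id}; [a,b))$ with the same relative cohomology $H^{* - \iota}(\{S \leq b\}, \{S \leq a\}; \FF)$; that the two filtrations match on the nose is the classical identity between the symplectic action at a Hamiltonian 1-chord of $H$ and the value of $S$ at the corresponding fiberwise critical point. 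Concatenating these three isomorphisms yields the claim.

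The hard part will be the bookkeeping in the previous paragraph: verifying that the Guillermou-Viterbo $\sT$-linear equivalence intertwines not only the two kernels but also the action-window objects $1_{M^2 \times [a,b)}$ and $1_{\widetilde M \times [a,b)}$, and that the Morse index shifts appearing on the sheaf side agree with the grading conventions used on the Floer side. A useful reduction is to first establish the statement for $C^1$-small $H$, where $\cK(\varphi^H)|_1$ is explicitly a constant sheaf on a thickened diagonal and the GFQI reduces to a small function on $M$ whose sublevel set cohomology is transparent, and then propagate to arbitrary compactly supported $H$ via a concatenation/continuation argument which shows that both the sheaf-theoretic filtration (through the maps of Theorem \ref{theorem: GKS}) and the Floer-theoretic action filtration behave correctly under splicing of Hamiltonian isotopies.
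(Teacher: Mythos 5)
The first thing to say is that the paper does not prove this statement at all: it is quoted verbatim (with a correction communicated by Viterbo) from \cite[Theorem E.1]{Guillermou-Viterbo} and used as a black box. So there is no internal proof to compare against, and any argument you give is necessarily a ``genuinely different route.'' Your proposed route --- identify $\Ext_\sT^*(1_{M^2\times[a,b)},\sHom^\star(\cK(\varphi^H)|_1,\cK(\id)))$ with filtered GF homology via the equivalence of \cite[Section 5.2]{Guillermou-Viterbo}, then identify filtered GF homology with filtered Lagrangian Floer cohomology via Viterbo/Milinkovi\'c--Oh --- is coherent in outline, and its first half is essentially how the paper proves the separate Theorem \ref{generating function comparison}.

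There are, however, two substantive gaps. First, the \cite[Section 5.2]{Guillermou-Viterbo} equivalence you invoke, and the whole GFQI-on-$S^{2n}$ framework behind it, is set up only for $M=\RR^n$; the paper itself is careful to state Theorem \ref{generating function comparison} only for $U\subseteq\RR^{2n}$, whereas Theorem \ref{theorem: GV-comparision with HF} is needed for an arbitrary closed $M$. Existence of a GFQI for the twisted graph in $T^*(M\times M)$ is fine (Sikorav), but the identification $\cK(\varphi^H)|_1\mapsto F_S|_1$, the matching of the window objects $1_{M^2\times[a,b)}$, and the index bookkeeping would all have to be reconstructed in that generality; this is not off the shelf. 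Second, your final step outsources the entire analytic content to the statement ``filtered Lagrangian Floer cohomology $=$ filtered GF homology.'' That comparison (Milinkovi\'c--Oh) is of comparable depth to the theorem being proven, and its standard versions concern compact Lagrangians isotopic to the zero section; here $\hatLam_{\varphi_1^H}$ and $\hatLam_{\id}$ are noncompact and intersect degenerately along a noncompact set at infinity, which is exactly why the paper must define the right-hand side via $C^2$-small perturbations and convex-at-infinity almost complex structures following \cite[Appendix E]{Guillermou-Viterbo}. Your proposal does not engage with this, so as written it trades one deep citation for another that is not actually available in the required setting. The reduction to $C^1$-small $H$ plus continuation that you sketch at the end is the right instinct for matching monotonicity maps, but it does not by itself repair either gap.
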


\subsection{Floer theory review (and some lemmas)}
\subsubsection{Symplectic cohomology}
We follow  \cite{Cieliebak-Frauenfelder-Oancea2010} for filtered symplectic (co)homology, except we use a cohomological grading as in \cite{Ganatra-thesis}.  We recall the definitions, 
in particular to fix conventions for gradings and filtrations. 

On $T^*M$ we take the standard symplectic form $\omega=dp\wedge dq=d \lambda$, where $\lambda=pdq$. For a Hamiltonian function $H: I\times T^*M \rightarrow \RR$, its Hamiltonian vector field is given by $\iota_{X_H}\omega=-dH$. 

The action functional for loops $x:S^1\rightarrow T^*M$ is given by
\[\mathcal{A}_H(x)=\int_{0}^1[x^* \lambda-H(z,x(z))]dz.\]
Its critical points are $1$-periodic orbits of $X_H$.  Note that if $x$ is a $1$-periodic orbit of $X_H$ with $x(0)=(q,p)$, we have
$\mathcal{A}_H(x)=S_H^1(q,p)$, where the right hand side was defined in Equation \eqref{S H formula}. 

For a non-degenerate 1-periodic orbit $x$ of $X_H$, the degree $|x|$ is define by $|x|=n-\CZ(x)$, where $\CZ(x)$ is the Conley-Zehnder index of $x$ (\cite{Robbin-Salamon-index,Salamon1999lectures}). As $c_1(T(T^*M))=0$, the Conley-Zehnder index takes values in $\ZZ$. 

For an almost complex structure $J$ on $T^*M$, we say $J$ is compatible with $\omega$ if $g(v,w)=\omega(v,Jw)$ is a Riemannian metric. Then we have $X_H=J\nabla_g H$, and the positive gradient flow of $\mathcal{A}_H$ gives the Floer equation 
\[u_s+J(u_t- X_H)=u_s+Ju_t+\nabla H=0.\]

For two non-degenerate 1-periodic orbits $x_{\pm}$ of $X_{H}$, we define $\widehat{\cM}(x_-,x_+)$ to be the space of solutions of the Floer equation with $\lim _{s\rightarrow \pm\infty} u(s,t)=x_{\pm}(t)$. The space admits a $\RR$-action by $a\cdot u(s,t)=u(s+a,t)$, and its quotient is denoted by $\cM(x_-,x_+)=\widehat{\cM}(x_-,x_+)/\RR.$ For generic convex at infinity $J_t$, the moduli space $\cM(x_-,x_+)$ is a manifold of dimension $|x_-|-|x_+|-1$.

For $u\in \cM(x_-,x_+)$, the energy $ E(u) = \int
|\partial_su|^2 ds\wedge dt$ satisfies:
\[E(u) = \mathcal{A}_H(x_+)-\mathcal{A}_H(x_-)\geq 0.\]

We write $\cS(H) \subseteq \RR$ for the action spectrum -- values of $\cA_H$ on 1-periodic orbits.  
For $-\infty \leq a<b \leq \infty$, and we denote $\cH^{(a,b)}$ the set of all Hamiltonian functions $H$ on $T^*M$ such that $a,b \notin \cS(H)$, and all 1-periodic orbits of $X_H$ with action $a<\mathcal{A}(x)<b$ are non-degenerate. 

For $H\in \cH^{(a,b)}$, the Floer complex $CF^*_{<b}(H)$ is the graded $\FF$-vector space generated by all 1-periodic orbits of $X_H$ with action $\mathcal{A}(x)<b$.  One writes:\footnote{It might be more reasonable to denote this as $CF^*_{[a,b)}(H)$.  We have nevertheless followed the notation in \cite{Cieliebak-Frauenfelder-Oancea2010}; the difference being immaterial as $a$ and $b$ are anyway forbidden from being in the action spectrum.}  
\[CF^*_{(a,b)}(H):=CF^*_{<b}(H)/CF^*_{<a}(H).\]

As usual, the Floer differential $\delta: C^q_{(a,b)}(H) \rightarrow C^{q+1}_{(a,b)}(H)$ counts holomorphic strips: 
\[\delta x_+= \# \cM(x_-,x_+) x_-.\]

The standard argument shows that $\delta^2=0$; we write
\[HF^*_{(a,b)}(H):=H^*(CF^*_{(a,b)}(H),\delta).\]
\begin{remark} So long as $0 \notin (a,b)$, we may and will make the same definition for compactly supported $H$.  (Any degenerate orbits of action $0$ do not enter into the definition of the complex.) 
\end{remark}

It is immediate from the definition that for $a<b<c$ such that corresponding Floer cohomology can be defined, we have a long exact sequence
\begin{equation}\label{eq: filtered long exact seq of HF}
    HF^*_{(a,b)}(H) \rightarrow HF^*_{(a,c)}(H) \rightarrow HF^*_{(b,c)}(H) \xrightarrow{+1}.
\end{equation}

If $H_-\leq H_+$, then a monotone increasing homotopy induces a chain map, 
\[(CF^*_{(a,b)}(H_-),\delta)\rightarrow (CF^*_{(a,b)}(H_+),\delta),\]
which is independent with the homotopy on cohomology.

We write $\cH_{\infty}^{a,b}(U) \subseteq \cH^{a,b}$ for the Hamiltonians which are linear at infinity and nonpositive on $U$.  One takes by definition:
\[SH^*_{(a,b)}(\overline{U})=\varinjlim_{H\in \cH_{\infty}^{a,b}(U)}HF^*_{(a,b)}(H).\]

\begin{proposition}[{\cite[Proposition 1.4]{Viterbo-functorsI}}]For $\delta, \epsilon>0$ and small enough, we have 
\[SH^*_{(-\delta,\epsilon)}(\overline{U})\simeq H^*(\overline{U},\partial {U},\FF ).\]   
\end{proposition}

Now let us take $L>\epsilon>0 >-\delta$.  Taking direct limits in  \eqref{eq: filtered long exact seq of HF} yields: 
\begin{equation}\label{eq: filtered long exact seq of SH}
    H^*(\overline{U},\partial {U},\FF ) \rightarrow SH^*_{(-\delta,L)}(\overline{U}) \rightarrow SH^*_{(\epsilon,L)}(\overline{U}) \xrightarrow{+1}.
\end{equation}

On the other hand, for $a,b$ with $0\notin (a,b)$, 
we write $\cH_c^{a,b}(U)\subseteq \cH^{a,b}$ for the  Hamiltonians compactly supported in $U$, and  
\[\widetilde{SH}^*_{(a,b)}(\overline{U})=\varprojlim_{H\in \cH_c^{a,b}(U)}HF^*_{(a,b)}(H).\]

When defining symplectic homology using different types of Hamiltonians, one should be careful on the direction of the (co)limit. It depends on certain conventions about grading, direct of continuation maps and type of Hamiltonians. In the introduction, we use one definition of $SH$ involving a colimit for Hamiltonian convergent to infinity outside of the domain; here we introduce another definition $\widetilde{SH}$ involving a limit for compactly supported Hamiltonian convergent to negative infinity inside the domain. This kind of discrepancy has shown up in \cite[Section 2.6]{Cieliebak-Frauenfelder-Oancea2010}.  We use a variant of \cite[Proposition 2.5]{Cieliebak-Frauenfelder-Oancea2010}, for which we thank Kai Cieliebak and Alexandru Oancea:

\begin{proposition}\label{proposition: Cieliebak-Oancea}For $L> \epsilon > 0$, we have
\[\widetilde{SH}_{(\epsilon,L]}^{*+1}(\overline{U})\simeq SH_{(\epsilon,L]}^{*}(\overline{U}).\] 
\end{proposition}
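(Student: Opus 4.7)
This is a finite-action-window refinement of \cite[Prop.~2.5]{Cieliebak-Frauenfelder-Oancea2010}. The strategy is to choose compatible cofinal families on both sides whose Floer complexes stabilize in the window $(\epsilon, L]$ for large parameters, and then match generators up to a Conley--Zehnder degree shift of $1$.

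On the $\widetilde{SH}$ side, take a cofinal family $\{H_n\} \subseteq \cH_c^{\epsilon,L}(U)$ of the following shape: in a thin Liouville collar $[1-2\delta_n, 1] \times \partial U \subseteq U$ (with radial coordinate $r$ so that $\partial U = \{r=1\}$), the Hamiltonian $H_n$ depends only on $r$, is strictly convex, decreases from a large value $C_n$ at $r = 1-2\delta_n$ to $0$ at $r = 1$, and is equal to the constant $C_n$ on the rest of $U$; require $C_n \to \infty$ and $\delta_n \to 0$. On the $SH$ side, take a cofinal family $\{K_n\} \subseteq \cH_\infty^{\epsilon,L}(U)$ with $K_n \equiv 0$ on $\overline{U}$ and $K_n$ linear of slope $\mu_n$ in the symplectic completion, with $\mu_n \to \infty$. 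After small time-dependent perturbations localized near the orbits to ensure non-degeneracy, both families are admissible.

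For $n$ large enough that $C_n, \mu_n > L$ with appropriate margin, the $1$-periodic orbits with action in $(\epsilon, L]$ are in canonical bijection between the two families: each Reeb orbit $\gamma$ on $\partial U$ of period $T(\gamma) \in (\epsilon, L]$ determines an orbit of $X_{H_n}$ at the unique level $r_\gamma \in (1-2\delta_n, 1)$ where $h_n'(r_\gamma) = -T(\gamma)$ (with action tending to $T(\gamma)$ as $\delta_n \to 0$), and likewise an orbit of $X_{K_n}$ near $r = 1$ with action $T(\gamma)$. A direct Conley--Zehnder calculation in the collar (compare \cite[\S3]{Cieliebak-Frauenfelder-Oancea2010}) shows that the two indices differ by exactly $+1$: the strictly convex profile $h_n''(r_\gamma) > 0$ contributes an extra half-rotation in the radial--time plane that is absent for the linear model $K_n$. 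For the Floer differentials, the maximum principle for almost complex structures convex at infinity confines all relevant trajectories to a common neighborhood of $\overline{U}$, and a standard homotopy-of-data argument identifies the moduli spaces. Consequently $CF^{*+1}_{(\epsilon, L]}(H_n)$ and $CF^*_{(\epsilon, L]}(K_n)$ are chain-isomorphic for all $n \gg 0$.

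Since both the inverse limit (on the $\widetilde{SH}$ side) and the direct limit (on the $SH$ side) along these cofinal families then reduce to the same stable group, the claimed isomorphism follows. The \emph{main obstacle} is the careful comparison of Conley--Zehnder indices producing exactly the shift $*\mapsto *+1$ together with the maximum-principle control of Floer cylinders in a regime where one Hamiltonian is compactly supported with large interior plateau and the other is unbounded and linear. The finite action window $(\epsilon, L]$ in fact simplifies matters compared to \cite[Prop.~2.5]{Cieliebak-Frauenfelder-Oancea2010}, because only finitely many Reeb orbits on $\partial U$ contribute once $n$ is large, so no genuine limiting argument on chain complexes is needed beyond observing stabilization.
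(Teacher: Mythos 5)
Your approach is genuinely different from the paper's, and it has a gap that I don't think can be patched without essentially abandoning the strategy.

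The paper follows \cite[Prop.~2.5]{Cieliebak-Frauenfelder-Oancea2010}: it builds a single ``hybrid'' Hamiltonian $K$ that coincides with $-H_\beta$ below $0$ and is linear at infinity above $0$, so that the orbits of $X_K$ in the window $(\epsilon,L]$ split into a type-I part (computing $\widetilde{SH}_{(\epsilon,L]}$) and a type-II part (computing $SH_{(\epsilon,L]}$). The action filtration gives a short exact sequence $0\to C_I^*\to CF^*_{(\epsilon,L]}(K)\to C_{II}^*\to 0$, and the proof reduces to showing $CF^*_{(\epsilon,L]}(K)$ is acyclic (by squeezing against the ``dashed'' linear profile). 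The degree shift $*\mapsto*+1$ is then just the connecting map in the resulting long exact sequence — no Conley--Zehnder index comparison and no moduli-space identification are ever made.

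Two concrete problems with your argument.  First, the sign and direction of the cofinal family on the $\widetilde{SH}$ side are wrong. With the conventions of this paper ($\iota_{X_H}\omega=-dH$, $\lambda=p\,dq$, $\mathcal A_H(x)=\int x^*\lambda-H\,dt$), a \emph{positive} profile $h_n$ that decreases from a plateau $C_n$ to $0$ near $\partial U$ produces nonconstant orbits at levels $r$ with $h_n'(r)=-T<0$, hence $X_{H_n}=h_n'(r)R$ traverses the Reeb orbit backwards and the action is $rh_n'(r)-h_n(r)<0$; these orbits never enter the window $(\epsilon,L]$, so your $CF^*_{(\epsilon,L]}(H_n)$ is the zero complex. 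The paper instead computes $\widetilde{SH}_{(\epsilon,L]}$ via $HF^*_{(\epsilon,L]}(-H_\beta)$: one must use the \emph{negative} Hamiltonians $-H_\alpha$ (going to $-\infty$ in the interior), which is also the correct cofinal direction for an \emph{inverse} limit — your family $H_n\to+\infty$ is cofinal for a direct limit, not for $\varprojlim$.  Second, and more fundamentally, the assertion that ``a standard homotopy-of-data argument identifies the moduli spaces'' does not hold. The step-type and linear-at-infinity Hamiltonians are not comparable (neither dominates the other), so there is no monotone homotopy and no natural continuation map between the two Floer complexes, let alone a chain isomorphism. Matching generators with a degree shift of $+1$, even if the Conley--Zehnder computation is correct, says nothing about the differentials; a chain-level isomorphism is a strictly stronger claim than the cohomology isomorphism that the proposition asserts, and it is exactly this difficulty that the acyclicity/long-exact-sequence mechanism in the paper is designed to circumvent. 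If you want to pursue your route, you would need a genuine neck-stretching or SFT-compactness argument to relate the two moduli problems, which is substantially more work than the CFO-style filtration trick.
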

\begin{proof}
First, for a big-enough $\beta$, we have
\[\widetilde{SH}^*_{(a,b)}(\overline{U})=\varprojlim\nolimits_{\alpha}  HF^{*}_{(\epsilon,L]}(-H_\alpha)= HF^{*}_{(\epsilon,L]}(-H_\beta).\]

Consider the following function $K$.
\begin{figure}[htbp]
    \centering
\begin{tikzpicture}[xscale=1.2,yscale=0.8]
\draw[->] (-1,0)--(4,0)node[anchor=north] {$x$};
\draw[->] (0,-3.6)--(0,2.5);
\draw[fill=black] (0,0) circle (0.05);
\draw (0,0)node[anchor=north east] {$O$};

\draw[rounded corners] (-0.5,-3)--(1.5,-3)--(1.8,-2.5)--(2,-0.2)--(2.8,0.2)--(3.2,0.5)--(3.8,2);
\draw[red] (2.1,-0.15) circle (0.1);
\node[anchor=east] at (2,-0.3) {I};
\draw[blue] (2.7,0.15) circle (0.1);
\node[anchor=east] at (2.8,0.5) {II};
\draw[dashed] (1.8,-2.4)--(3.2,0.55);
 \end{tikzpicture}\label{fig:enter-label}
\end{figure}
The function below $0$, $K$ equals to $-H_\beta$; above $0$, $K$ is linear at infinity with a big enough slope. 

Closed orbits of $\varphi^K_1$ could be classified in the following types:

In bottom of the function, action of orbits will big enough, so none of them contributes to Floer homology. Inside the red circle, they contribute closed orbits generates $CF^*_{(\epsilon,L]}(K)$ that comes from $\varphi^{-H_\beta}_1$, we call them type I; inside the blue circle, they contribute closed orbits generates $CF^*_{(\epsilon,L]}(K)$ that give $SH_{(\epsilon,L]}^{*}(\overline{U})$, we call them type II.

Let us denote subcomplexes generated by corresponding orbits as $C_I^*, C_{II}^*$. Then we have
\[H^*(C_I^*) \simeq H^*_{(\epsilon,L]}(-H_\beta),\quad H^*(C^*_{II}) \simeq SH^{*}_{(\epsilon,L]}(\overline{U}).\]

On the other hand, they form a short exact sequence 
\[0\rightarrow C_{I}^* \rightarrow CF^*_{(\epsilon,L]}(K) \rightarrow C_{II}^* \rightarrow 0.\]
It remains to show that $CF^*_{(\epsilon,L]}(K)$ is acyclic. 
This can be done by comparing with the symplectic
homology as computed by the Hamiltonians indicated
by the dashed line, as in \cite[Prop. 2.5]{Cieliebak-Frauenfelder-Oancea2010}
\end{proof}

\subsubsection{Lagragian Floer cohomology}
We now fix conventions for  Lagrangian Floer theory,  following \cite{Viterbo}. Fix two transversely intersecting compact exact Lagrangians, $L_0$ and $L_1$.  We fix primitives:  $f_i: L_i \to \RR$ with
$df_i = \lambda|_{L_i}$.  We also fix grading and spin structures on the Lagrangians.

We write $f_{01}(p):=f_1(p)-f_0(p)$ for $p\in L_0\cap L_1$. The Maslov class enables us to define a $\ZZ$-grading $|p|\in \ZZ$ for $p\in L_0\cap L_1$.  We fix the sign of the grading as follows.  If $x$ is a non-degenerate 1-periodic orbit of $X_H$ for of a Hamiltonian function $H$, then there is a corresponding intersection point for $L_0$ the graph of the time-1 return map, and $L_1$ the diagonal.  We ask that the degree of the intersection point agrees with the degree in Hamiltonian Floer cohomology, as given above.

For a compatible almost complex structure $J$ and intersection points $p_{\pm}$, one considers the moduli  space
$\widehat{\mathscr{M}}(p_-,p_+)$  of solutions of the $J$-holomorphic curve equation: $u_s+Ju_t=0$ with $u(s,i)\in L_i$ and $\lim _{s\rightarrow \pm\infty} u(s,t)=p_{\pm}$; quotient by the translation action to get the moduli space ${\mathscr{M}}(p_-,p_+)=\widehat{\mathscr{M}}(p_-,p_+)/\RR$. For generic convex-at-infinity $J_t$, one has that $\mathscr{M}(p_-,p_+)$ is a manifold of dimension $|p_-|-|p_+|-1$. 

For $-\infty \leq a<b \leq \infty$, the Floer complex $CF^*_{J}(L_0,L_1;a)$ is the graded $\FF$-vector space generated by $p\in L_0\cap L_1$ with $f_{01}(x)\geq a$.  One writes 
\[CF^*_{J}(L_0,L_1;[a,b)):=CF^*_{J}(L_0,L_1;a)/CF^*_{J}(L_0,L_1;b).\]

We define the Floer differential $\delta: CF^*_{J}(L_0,L_1;[a,b)) \rightarrow CF^*_{J}(L_0,L_1;[a,b))$ such that
\[\delta p_+= \# \mathscr{M}(p_-,p_+) p_-.\]
The standard argument shows that $\delta^2=0$, and we denote the homology as: 
\[HF^*(L_0,L_1;[a,b)):=H^*(CF^*_{J}(L_0,L_1;[a,b)),\delta).\]

In $T^*(M\times M)$, the graph of a compactly supported Hamiltonian $\varphi^H_1$ defines an exact Lagrangian embedding whose primitive is $S_H^1$. We abuse notation $\hatLam_{\varphi_1^H}$ to represent both Legendrian we defined in \eqref{equation: slicewise graph, non homogeneous case}, and corresponding exact Lagrangian. Since $T^*M$ has trivial $c_1$ and $w_2$, the diagonal $\hatLam_{\id}$ define an exact Lagrangian brane, then also $\hatLam_{\varphi_1^H} $ as it is a Hamiltonian deformation of $\hatLam_{\id}$. We also use $\hatLam_{\varphi_1^H}$ to denote the corresponding exact Lagrangian brane. However, as $\hatLam_{\varphi_1^H}$ always have degenerate intersection at infinity, we need to define $HF^*(\hatLam_{\varphi_1^H},\hatLam_{\id};[a,b))$ ($HF^*(\hatLam_{\id},\hatLam_{\varphi_1^H};[a,b))$) by taking a $C^2$-small perturbation on Lagrangians and an almost complex structure convex-at-infinity as \cite[Appendix E]{Guillermou-Viterbo}, which also explains well-definedness when $a,b$ are finite. 

On the other hand, we will only use $HF^*(\hatLam_{\varphi_1^H},\hatLam_{\id};[a,b))$ in the case that $0$ is not in the action window $(a,b)$ (we also assume $a,b$ are not in the action spectrum of $X_H$), and the non-constant 1-periodic points of $X_H$ are non-degenerate.

Let us recall the relation between Lagrangian and Hamiltonian Floer cohomology:

\begin{proposition}[{\cite[lemma 3.2]{Viterbo-functorsII}, \cite[Section 5.2.]{Biran-Poltervich-Salamon}}]\label{proposition: LHF=HHF}
Fix a compactly supported function $H: T^*M \to \RR$.  Let $\phi_1^H$
be the time-1 map. Fix any $a,b$ such that $a,b$ are not in the action spectrum of $H$, and the interval $[a, b)$ does not containing $0$. 
Then
\[HF^*(\hatLam_{\id},\hatLam_{\varphi_1^H};[a,b))\simeq HF^{*}_{(-b,-a)}(H).\]
\end{proposition}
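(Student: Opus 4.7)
The plan is to match the Lagrangian Floer data for $(\hatLam_{\id}, \hatLam_{\varphi_1^H})$ with the Hamiltonian Floer data for $H$ at the levels of generators, gradings, actions and differentials, and read off the action window correspondence.

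First I would set up the bijection of generators. An intersection $\xi \in \hatLam_{\id} \cap \hatLam_{\varphi_1^H}$ is precisely a triple $((q,-p),(q,p))$ with $\varphi_1^H(q,p) = (q,p)$, i.e.\ a fixed point of the time-one map, equivalently a 1-periodic orbit $x$ of $X_H$ with $x(0) = (q,p)$. Using the convention on primitives fixed by the paper (so that the primitive of $\hatLam_{\varphi_1^H}$ is $S_H^1$ viewed through the $t$-coordinate of \eqref{equation: slicewise graph, non homogeneous case}, and $\hatLam_{\id}$ has primitive $0$), the difference $f_{01}(\xi)$ equals $-S_H^1(q,p) = -\mathcal{A}_H(x)$. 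This sign flip turns the Lagrangian action window $[a,b)$ into the Hamiltonian window $\mathcal{A}_H(x) \in (-b, -a]$, which, since $a,b \notin \mathcal{S}(H)$, is the same window $(-b,-a)$ defining $CF^*_{(-b,-a)}(H)$. The grading compatibility $|\xi| = |x| = n - \CZ(x)$ is precisely the convention fixed earlier in the subsection, where the degree of an intersection between the diagonal and the graph of the time-one map is required to match the Conley-Zehnder degree; the Maslov index for the path of Lagrangians $\hatLam_{\varphi_z^H}$ from $z=0$ to $z=1$ reduces to the Conley-Zehnder index of the corresponding orbit by a standard linearized computation.

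Next I would match differentials by the classical folding trick of Floer. Equipping $T^*(M\times M)$ with an almost complex structure of split form $\widetilde{J} = J \oplus (-J)$ (using the involution $(q,p)\mapsto(q,-p)$ on the first factor to identify $\hatLam_{\id}$ with a graph), a holomorphic strip $u = (u_1,u_2)\colon \RR \times [0,1] \to T^*(M\times M)$ with boundary on $(\hatLam_{\id}, \hatLam_{\varphi_1^H})$ assembles into a map $\tilde u\colon \RR \times S^1 \to T^*M$ by
\[
\tilde u(s,t) = \begin{cases} \overline{u_1(s,1-2t)}, & 0 \le t \le 1/2, \\ \varphi_{2t-1}^H\bigl(\overline{u_1(s,0)}\bigr) \text{ matched with } u_2(s, 2t-1), & 1/2 \le t \le 1, \end{cases}
\]
so that after the standard rescaling/gauge change by $\varphi_t^H$ on the second half, the Cauchy--Riemann equation for $u$ becomes the Floer equation $\partial_s \tilde u + J(\partial_t \tilde u - X_H) = 0$ for $\tilde u$. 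The boundary conditions on $\hatLam_{\id}$ and $\hatLam_{\varphi_1^H}$ become exactly the closing-up conditions $\tilde u(s,0)=\tilde u(s,1)$ for a smooth cylinder, and the asymptotic conditions correspond under the bijection of generators. An energy computation shows $E(u) = E(\tilde u)$, so this map between moduli spaces respects the natural action filtrations; transversality and index comparisons run in parallel on the two sides.

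Step by step, once the bijection of moduli spaces is established, the Floer differentials $\delta$ and hence their filtered cohomologies agree. The main obstacle will be the careful bookkeeping in the folding step: making sure that the split almost complex structure is admissible (convex at infinity in the needed sense so that strips stay in a compact region controlled by the compact support of $H$), verifying the sign conventions for the primitives so that the action window $[a,b)$ really maps to $(-b,-a)$, and confirming that the Maslov/Conley-Zehnder grading comparison gives the same integer on both sides. These points are classical, going back to \cite{Viterbo-functorsII} and \cite{Biran-Poltervich-Salamon}, but the non-compact setting of $T^*M$ with a compactly supported $H$ requires the maximum principle argument from those references to rule out escape of strips to infinity, which is what makes the bijection of moduli spaces well-defined.
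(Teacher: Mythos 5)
The paper does not actually prove this proposition. It is cited from \cite[Lemma 3.2]{Viterbo-functorsII} and \cite[Section 5.2]{Biran-Poltervich-Salamon}, and the only commentary the paper offers is the remark immediately following: that the sign flip $(-b,-a)$ comes from a convention difference in the action filtrations, and that the hypothesis $0\notin[a,b)$ is needed only for the trivial free homotopy class of loops. Your proposal reconstructs the classical folding argument that underlies those references, and the overall plan (match generators, actions, and gradings, then fold strips into cylinders via a gauge transformation) is the right one.

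That said, two places need more care. First, your explicit piecewise formula for the folded map $\tilde u$ does not literally produce a smooth cylinder: the two branches do not glue, and ``matched with'' hides the actual construction. The standard move is to apply the $t$-dependent gauge change $u_2 \mapsto (\varphi^H_t)^{-1}\circ u_2$ across the entire strip \emph{before} concatenating; this simultaneously straightens the moving boundary $\hatLam_{\varphi_1^H}$ into the fixed diagonal and converts $\overline{\partial}_J$ into the $s$-invariant Floer operator for $H$, after which folding along the diagonal yields a genuine Floer cylinder. Second, and more importantly, you never invoke the hypothesis $0\notin[a,b)$, which the paper explicitly identifies as the delicate point. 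Since $H$ is compactly supported, $\hatLam_{\varphi_1^H}$ and $\hatLam_{\id}$ \emph{coincide} outside a compact set, giving a continuum of degenerate, action-zero intersections; the paper (following \cite[Appendix E]{Guillermou-Viterbo}) defines the left-hand side only after a $C^2$-small perturbation at infinity, and the reason the result is independent of that perturbation is precisely that the action window excludes $0$. Your bijection of generators and of moduli spaces is therefore only valid after addressing this, so as written the argument is incomplete in exactly the regime the proposition is stated for. A smaller point: the paper fixes the Lagrangian Floer grading convention for the order $(\text{graph},\text{diagonal})$, while the proposition has the opposite order $(\hatLam_{\id},\hatLam_{\varphi_1^H})$; you assert that the gradings agree, but a word about why the swap does not introduce a shift would make the argument self-contained.
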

\begin{remark}
    The assumption that $0$ is not in the action window is only required for the trivial free homotopy class of loops (cf. \cite[Remark 4.4.2]{Biran-Poltervich-Salamon}). The minus sign in Hamiltonian Floer homology comes from a difference in our action filtration conventions between the two sides. 
\end{remark}

\begin{corollary}
    \label{lemma: two Floer comp.}For a compactly supported Hamiltonian functions $H$, and $L> \epsilon > 0$ such that $L,\epsilon$ are not in the action spectrum of $-H$, we have
\[HF^*(\hatLam_{\varphi_1^H},\hatLam_{\id};[-L,-\epsilon))\simeq HF^{*}_{(\epsilon,L)}(-H).\]
\end{corollary}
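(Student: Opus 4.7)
The plan is to deduce the corollary from Proposition \ref{proposition: LHF=HHF} applied to $-H$, combined with the invariance of Lagrangian Floer cohomology under an exact symplectomorphism.

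First I would apply Proposition \ref{proposition: LHF=HHF} with the Hamiltonian $-H$ in place of $H$ and action window $[a,b) = [-L, -\epsilon)$. The hypotheses require the endpoints to avoid the action spectrum of $-H$ (provided by the corollary's assumption) and that $0 \notin [-L,-\epsilon)$, which is immediate from $\epsilon > 0$. Since $\mathcal{A}(-H) = -\mathcal{A}(H)$, the resulting action window on the Hamiltonian Floer side is $(-b, -a) = (\epsilon, L)$, which coincides with $(\epsilon, L]$ because $L \notin \mathcal{A}(-H)$. This step yields
\[ HF^*(\hatLam_{\id}, \hatLam_{\varphi_1^{-H}}; [-L, -\epsilon)) \simeq HF^*_{(\epsilon, L]}(-H). \]

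Next I would compare this Lagrangian Floer cohomology with the target $HF^*(\hatLam_{\varphi_1^H}, \hatLam_{\id}; [-L, -\epsilon))$ using the exact symplectomorphism $\Psi := \id_{T^*M} \times \varphi_1^H$ of $\overline{T^*M} \times T^*M$. A direct computation gives $\Psi(\hatLam_{\id}) = \hatLam_{\varphi_1^H}$, and (choosing $-H$ so that $\varphi_1^{-H}$ inverts $\varphi_1^H$, which we are free to do since Hamiltonian Floer cohomology depends only on the time-one map up to continuation) also $\Psi(\hatLam_{\varphi_1^{-H}}) = \hatLam_{\id}$. Lagrangian Floer cohomology is invariant under exact symplectomorphism, provided that primitives are transported consistently. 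Using the identity $\Psi^*\lambda - \lambda = d(S_H^1 \circ \pi_2)$ together with $S_{-H}^1(q,p) + S_H^1(\varphi_1^{-H}(q,p)) = 0$, which follows by the substitution $u = s-1$ in formula \eqref{S H formula}, one checks that the primitives pushed forward by $\Psi$ agree with the canonical choices $S_H^1$ on $\hatLam_{\varphi_1^H}$ and $0$ on $\hatLam_{\id}$. An intersection point corresponding to a fixed point $(q,p)$ of $\varphi_1^H$ then has filtration value $f_{01} = -\mathcal{A}_H(x)$ in both pairs, so $\Psi$ yields a filtered isomorphism
\[ HF^*(\hatLam_{\id}, \hatLam_{\varphi_1^{-H}}; [-L, -\epsilon)) \simeq HF^*(\hatLam_{\varphi_1^H}, \hatLam_{\id}; [-L, -\epsilon)). \]

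Composing the two isomorphisms yields the corollary. The main obstacle is reconciling sign and grading conventions: one must check that the Maslov gradings on the two Lagrangian pairs are compatible so that $\Psi$ induces a degree-preserving (as opposed to Poincar\'e-dual) isomorphism, and that the action filtration values match exactly at each corresponding intersection. A secondary point, already built into the paper's setup, is that the $\hatLam$'s are non-compact and Floer cohomology must be defined via the $C^2$-small perturbation scheme of \cite[App.~E]{Guillermou-Viterbo}; however $\Psi$ is a globally defined exact symplectomorphism that is manifestly compatible with the perturbation construction, so the identification descends to the perturbed models without additional complication.
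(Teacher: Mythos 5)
Your proof is correct and follows essentially the same route as the paper: apply the exact symplectomorphism $\id \times \varphi_1^H$ on $\overline{T^*M} \times T^*M$ to move between the Lagrangian pairs $(\hatLam_{\varphi_1^H},\hatLam_{\id})$ and $(\hatLam_{\id},\hatLam_{\varphi_1^{-H}})$, then invoke Proposition \ref{proposition: LHF=HHF} for $-H$. The paper's proof is a one-liner that only records the symplectomorphism step; you have filled in the filtration bookkeeping (the identity $\Psi^*\lambda - \lambda = d(S_H^1 \circ \pi_2)$ and the cocycle relation $S_{-H}^1 + S_H^1 \circ \varphi_1^{-H} = 0$), which is a useful elaboration.

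One small imprecision: the remark ``choosing $-H$ so that $\varphi_1^{-H}$ inverts $\varphi_1^H$, which we are free to do since Hamiltonian Floer cohomology depends only on the time-one map up to continuation'' is misleading --- the \emph{filtered} Floer groups used here genuinely depend on the Hamiltonian, not just its time-one map, so one is not free to swap in a different generating Hamiltonian after the fact. Fortunately no choice is needed: the Hamiltonians $H_\alpha$ in the wrapping formula are autonomous, and for a time-independent $H$ one has $\varphi_t^{-H} = \varphi_{-t}^H$, hence $\varphi_1^{-H} = (\varphi_1^H)^{-1}$ automatically.
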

\begin{proof}The Hamiltonian map $\id\times \varphi_1^H$ on $T^*(M\times M)$ induces an isomorphism
\[HF^*(\hatLam_{\varphi_1^H},\hatLam_{\id};[-L,-\epsilon))\simeq HF^{*}( \hatLam_{\id},\hatLam_{\varphi_1^{-H}};[-L,-\epsilon)).\qedhere\]
\end{proof}

\subsection{Proof of Theorem \ref{theorem: CT=SH}}
Now, we first take $L>\epsilon > 0$ with $\epsilon>0$ small enough. Recall the 9-diagram in Lemma \ref{lemma: 9-diagram}. We set $H^*\HH^\bullet=\HH^*$, then corresponding cohomology diagram is
\[\begin{tikzcd}
{ \HH_{\sT}^*(\Sh_{U^c}(M;\sT),(-\infty,\epsilon])   }  \arrow[d] \arrow[r]   & {\HH_{\sT}^*(\Sh_{U^c}(M;\sT),{(-\infty,L]})} \arrow[d] \arrow[r] & {\HH_{\sT}^*(\Sh_{U^c}(M;\sT),{(\epsilon,L]})} \arrow[d] \arrow[r, "+1"] & {} \\
{H^{*}(T^*M,\FF)} \arrow[d] \arrow[r, "\simeq"] & {H^{*}(T^*M,\FF)} \arrow[d] \arrow[r]                    & 0 \arrow[d] \arrow[r, "+1"]                               & {} \\
{\HH_{\sT}^*(\sT(U),(-\infty,\epsilon])   }  \arrow[d, "+1"] \arrow[r]   & {\HH_{\sT}^*(\sT(U),{(-\infty,L]})} \arrow[d, "+1"] \arrow[r]                   & {\HH_{\sT}^*(\sT(U),{(\epsilon,L]})}      \arrow[d, "+1","\simeq"'] \arrow[r, "+1"] & {} \\
{}                                                          & {}                                                                   & {}                                                        & .  
\end{tikzcd}\]

We will now substitute various now-established results into the diagram.

Theorem \ref{proposition: computation of HH at level 0} determines the first column (note $U$ is $\FF$-orientable): we showed that 
\begin{equation}\label{eq: small action comparision}
   \HH_{\sT}^*(\sT(U),(-\infty,\epsilon])    \simeq H^{BM}_{2n-*}(U,\FF)\simeq H^*(\overline{U},\partial U,\FF). 
\end{equation}

Now, we use Theorem \ref{theorem: GV-comparision with HF} and Corollary \ref{corollary: wrapping of Hom(F+)}. For all $a<b$, 

\[\HH_{\sT}^*(\Sh_{U^c}(M;\sT),{(a,b]})
    =\varprojlim\nolimits_{\alpha}HF^*(\hatLam_{\varphi_1^{H_\alpha}},\hatLam_{\id};[-b,-a)).\]

In particular, for $L>\epsilon>0$, we use Corollary \ref{lemma: two Floer comp.} to see that 
\[\HH_{\sT}^*(\Sh_{U^c}(M;\sT),{(\epsilon,L]}) \simeq \widetilde{SH}_{(\epsilon,L]}^{*}(\overline{U}).\]

Then the third column of the 9-diagram together with Proposition \ref{proposition: Cieliebak-Oancea} gives: 
\begin{equation}\label{eq: postive action comparision}
    \HH_{\sT}^*(\sT(U),{(\epsilon,L]})\simeq \HH_{\sT}^{*+1}(\Sh_{U^c}(M;\sT),{(\epsilon,L]})
    =\widetilde{SH}_{(\epsilon,L]}^{*+1}(\overline{U})\simeq SH_{(\epsilon,L]}^{*}(\overline{U}).
\end{equation}

Then we plug results of \eqref{eq: small action comparision} and \eqref{eq: postive action comparision} into the third row, we have a long exact sequence
\begin{equation}\label{equation: almost final l.e.s}
 H^{*}(\overline{U},\partial U,\FF) \rightarrow \HH_{\sT}^*(\sT(U),{(-\infty,L]}) \rightarrow  SH_{(\epsilon,L]}^{*} (\overline{U})\xrightarrow{+1}.   
\end{equation}

On the other hand, the isomorphism of Theorem \ref{theorem: GV-comparision with HF} is compatible with filtration. Then, the long exact sequence \eqref{equation: almost final l.e.s} is compatible with filtration long exact sequence \eqref{eq: filtered long exact seq of SH} of symplectic cohomology:
 \[H^{*}(\overline{U},\partial U,\FF) \rightarrow  
 SH_{(-\delta,L]}^{*}(\overline{U})\rightarrow      SH_{(\epsilon,L]}^{*}(\overline{U})\xrightarrow{+1}.\]
 
 Then we conclude by comparing the long exact sequence \eqref{equation: almost final l.e.s} with the action-filtration long exact sequence of symplectic coholomogy. That is, for $L>0$, we have  
\[ \HH_{\sT}^*(\sT(U),{(-\infty,L]}) \simeq SH_{(-\delta,L)}^{*}(\overline{U}) \simeq  SH_{(-\infty,L)}^{*}(\overline{U}),\]
where the last isomorphism follows from the fact that no 1-periodic orbit for $X_{H_\alpha}$ with action $<-\delta$ with $\delta\gg 0$ for the choice cofinal sequence $H_\alpha$ (cf. \cite[p. 993]{Viterbo-functorsI}). For the same reason, 
both sides vanish if $L<0$. This completes the proof. $\square$

\begin{remark}We could have also argued for Equation \eqref{eq: small action comparision}  by applying Theorem \ref{theorem: GV-comparision with HF} to the action window $(-\delta,\epsilon]$. But now, since $0\in (-\delta,\epsilon]$, we should be careful that we need to take a $C^2$-small Morse perturbation of the diagonal $\hatLam_{\id}$ outside of $U$ to define Lagrangian Floer theory. In this case, one can conclude by adapting a result of Po\'zniak \cite[Theorem 5.2.2]{Biran-Poltervich-Salamon}, \cite[Theorem 3.4.11]{Pozniak-Thesis}. One advantage of this argument is that it is easy to see that \eqref{equation: almost final l.e.s} is compatible with the action window long exact sequence of symplectic cohomology.  
\end{remark}

\section{Comparison to generating function homology}

Using equation \eqref{wrapping formula for C out}, we can compare the generating function (GF) homology for $U$, defined by \cite{Traynor_SH-via-GF} based on the work of \cite{Viterbo-GF}, with $\HH_{\sT}^*(\Sh_{U^c}(M;\sT),(a,b]) $.  First we recall the definition; we follow the conventions of \cite{Fraser-Sandon-Zhang}.

For a symplectomorphism $\varphi: \RR^{2n}\to \RR^{2n}$, its graph $\operatorname{gr}(\varphi)=\{(q,p,\varphi(q,p):(q,p)\in \RR^{2n}\}$ is Lagrangian submanifold of $\overline{\RR^{2n}}\times \RR^{2n}$. We identify $\overline{\RR^{2n}}\times \RR^{2n}$ with $T^*\RR^{2n}$ by the symplectomorphism
\begin{equation}\label{equation: twisting symplectomorphism tau}
\tau: \overline{\RR^{2n}}\times \RR^{2n} \to T^*\RR^{2n},\quad \tau(q,p,Q,P)=\left(\frac{q+Q}{2}, \frac{p+P}{2},p-P, Q-q\right).    
\end{equation}
Then $\Gamma_{\varphi}\coloneqq \tau (\operatorname{gr}(\varphi))$ is a Lagrangian submanifold of $T^*\RR^{2n}$.

For any compactly supported Hamiltonian function $H:[0,1]\times \RR^{2n}\rightarrow\RR$, and its Hamiltonian flow $\varphi^H_z$, 
one can associate a 1-parameter family of functions $F: [0,1]\times S^{2n}\times \RR^{N} \rightarrow \RR$ such that for each $z\in [0,1]$, $F_z=F(z,-)$ is a generating function quadratic at infinity (GFQI) of
$\Gamma_{\varphi^H_z}$. Here, GFQI means that outside of a compact set of $S^{2n}\times \RR^{N}$, $F_z$ equals a quadratic form on the fiber direction of index $\iota$. The generating function homology of $\varphi^H$ is defined as
\begin{equation}\label{definition: GF homology}
    G_{*}^{(a,b]} (\varphi^H;\FF)\coloneqq H_{*-\iota}(\{F_1\leq b\}, \{F_1\leq a\};\FF).
\end{equation}

In fact, $G_{*}^{(a,b]} (\varphi^H;\FF)$ is independent with choices of $F_z$.  Indeed, using Viterbo-Th\'eret uniqueness (c.f. \cite{Theret_Viterbo_uniqueness}), it is proven in \cite[Lemma 2.2-(iii)]{Fraser-Sandon-Zhang} that for two family of $F_z$, $G_z$, there are exist two quadratic forms $Q_0,Q_1$ and a 2-parameter family of diffeomorphisms $\Phi_{y,z}$ of $S^{2n}\times \mathbb{R}^M$ such that $F_z\oplus Q_0= (G_z\oplus Q_0)\circ \Phi_{1,z}$. Then the Thom isomorphisms induced by $Q_0,Q_1$ and the induced map of $\Phi_{1,z}$ on homology define an isomorphism $H(\{F_1\leq b\}, \{F_1\leq a\};\FF) \simeq H(\{G_1\leq b\}, \{G_1\leq a\};\FF)$ (with suitable grading). It is proven in \cite[Proposition 4.1]{Fraser-Sandon-Zhang}, the isomorphism only depends on $F_z$, $G_z$ independently with $(Q_0,Q_1,\Phi_{y,z})$.

Presently, we have a well-defined homology group $G_{*}^{(a,b]} $ for a compactly supported Hamiltonian flow $\varphi^H$. To define homology groups to domains, we shall construct continuation maps for two Hamiltonians. For two compactly supported Hamiltonian $H \geq K$, there exist two 1-parameter families of functions $F,G: [0,1]\times S^{2n}\times \RR^{N} \rightarrow \RR$ such that $F_z$ (resp. $G_z$) are GFQI for $\Gamma_{\varphi^H_z}$ (resp. $\Gamma_{\varphi^G_z}$), and $F_z\leq G_z$ for each $z\in [0,1]$. Therefore, the natural inclusions of sublevel sets $\{G_1\leq b\} \subset \{F_1\leq b\}$ induces a continuation morphism $G_{*}^{(a,b]} (\varphi^H;\FF) \rightarrow G_{*}^{(a,b]} (\varphi^K;\FF)$. Hence, for a cofinal sequence $H_\alpha$ supported in $U$, we define
\[
G_{*}^{(a,b]}( U ;\FF)
\coloneqq \varprojlim\nolimits_{\alpha} G_{*}^{(a,b]} (\varphi^{H_\alpha};\FF).
\]

Actually, similar to Remark \ref{remark: wrapping formula for kernel}, the limit is taken over a directed set of Hamiltonian functions supported in $U$, and can be computed by cofinal sequences. 

Using results from \cite[Section 5.2]{Guillermou-Viterbo}, we can prove:
\begin{theorem} \label{generating function comparison} Let $\FF$ be a field. For an open set $U\subseteq \RR^{2n}$ and $a<b$, we have
\[G_{*}^{(a,b]}( U ;\FF) \simeq \HH_{\sT}^*(\Sh_{U^c}(M;\sT),(a,b]).\]
\end{theorem}
\begin{proof}[Sketch of proof]
For a 1-parameter family of generating function $F: [0,1]\times  S^{2n}\times \RR^{N}$ of $\varphi^H$ and $p: S^{2n}\times \RR^{N} \rightarrow  S^{2n}$, we set $\cK_{F}\coloneqq {p}_!^\sT1_{\{F_1\leq t\}}$. Here we remind that $\cK_{F}$ only depends on $\varphi^H$ by the 1-parameter version of Viterbo-Th\'eret uniqueness theorem of GFQIs (cf. \cite[
Lemma 2.2]{Fraser-Sandon-Zhang}).

In \cite[Section 5.2]{Guillermou-Viterbo}, the authors constructed a $\sT$-linear quantization of the symplectomorphism $\tau$ (see \eqref{equation: twisting symplectomorphism tau}), which induces a $\sT$-linear equivalence between fully $\sT$-linear subcategories $C_1$ of $\sT(T^*(\RR^{n}\times \RR^{n}))$ and $C_2$ of $\sT(T^* S^{2n})$, such that $\cK(\varphi^{H})|_1\in C_1$ is mapped to $\cK_{F}\in C_2$. In particular, we have that $\cK(\id)$ is mapped to $1_{S^{2n}\times \RR_{\geq 0}}$. Consequently, we have that
\begin{align*}
    \begin{aligned}
        &\Ext^*(1_{\RR^{2n}\times [-b,-a )},\sHom_\sT(\cK(\varphi^{H})|_1,\cK(\id)))\\
  \simeq &\Ext^*(1_{S^{2n}\times [-b,-a )},\sHom_\sT(\cK_{F},1_{S^{2n}\times \RR_{\geq 0}}))  \\
  \simeq & H_{*-\iota}(\{F_1\leq b\}, \{F_1\leq a\};\FF)=G_{*}^{(a,b]} (\varphi^H;\FF).
    \end{aligned}
\end{align*}

We remind here that we compare a relative Borel-Moore homology and a relative singular homology in the second isomorphism. This is true because that $F_1$ is quadratic at infinity (in particular, $F_1$ satisfies the Palais-Smale condition). 

Therefore, in virtue of Equation \eqref{wrapping formula for C out}, we only need to compare the continuation maps on GF homology and sheaves cohomology. In both case, we reduce to $C^1$-small Hamiltonians (see Remark \ref{remark: GKS theorem} for sheaves and \cite[Proposition 5.3]{Traynor_SH-via-GF} for GF.) Then we are able to see that continuation maps on both sides are induced by closed inclusions of sublevel sets of certain functions defined over the diagonal the base $\RR^{2n}$ or $S^{2n}$ (that is vanish at $\infty$ in the $S^{2n}$ case). Moreover, upto the equivalence constructed \cite[Section 5.2]{Guillermou-Viterbo}(which is also given by a sublevel sets of a certain function), the closed inclusions on both sides are the same. Therefore, we match the continuation maps, and the conclusion follows.
\end{proof}
\begin{remark}We only use that $\mathbb{F}$ is a field in applying \eqref{wrapping formula for C out}, where we use the Mittag-Leffler condition to commute limit with cohomology.  Instead, we could work with generating function chains  $CG_{\bullet}^{(a,b]}(U;\mathbf{k})$; then the argument here works for arbitrary coefficients.
\end{remark}

\bibliographystyle{plain}
\bibliography{Notes_REF}

\end{document}